\title{On the gauge group of Galois objects}
\author{Xiao Han, Giovanni Landi}
\address[]{\textit{Xiao Han}
\newline \indent SISSA,
via Bonomea 265, 34136 Trieste, Italy}
\email{xiao.han@sissa.it}
\address[]{\textit{Giovanni Landi}
\newline \indent
Universit\`a di Trieste,
Via A. Valerio, 12/1, 34127  Trieste, Italy
\newline \indent
Institute for Geometry and Physics (IGAP) Trieste, Italy 
\newline \indent and INFN, Trieste, Italy}
\email{landi@units.it}
\DeclareMathOperator{\Aut}{Aut}
\DeclareMathOperator{\Hom}{Hom}
\newcommand*{\id}{\textup{id}}
\newcommand*{\diag}{\textup{diag}}    
\numberwithin{equation}{section}
\theoremstyle{plain}
\newtheorem{thm}{Theorem}[section]
\newtheorem{lem}[thm]{Lemma}
\newtheorem{prop}[thm]{Proposition}
 \newtheorem{cor}[thm]{Corollary}
\newtheorem{defi}[thm]{Definition}
\theoremstyle{remark}
\newtheorem{exa}[thm]{Example}
\newtheorem{rem}[thm]{Remark}
\numberwithin{equation}{section}
\newcommand{\nn}{\nonumber}
\newcommand{\ot}{\otimes}
\newcommand{\beq}{\begin{equation}}
\newcommand{\eeq}{\end{equation}}
\newcommand{\A}{\mathcal{A}}
\newcommand{\B}{\mathcal{B}}
\newcommand{\cL}{\mathcal{L}}
\newcommand{\C}{\mathcal{C}}
\newcommand{\IC}{\mathbb{C}}
\newcommand{\IZ}{\mathbb{Z}}
\newcommand{\zero}[1]{{#1}{}_{\scriptscriptstyle{(0)}}}
\newcommand{\one}[1]{{#1}{}_{\scriptscriptstyle{(1)}}}
\newcommand{\two}[1]{{#1}{}_{\scriptscriptstyle{(2)}}}
\newcommand{\three}[1]{{#1}{}_{\scriptscriptstyle{(3)}}}
\newcommand{\four}[1]{{#1}{}_{\scriptscriptstyle{(4)}}}
\newcommand{\five}[1]{{#1}{}_{\scriptscriptstyle{(5)}}}
\newcommand{\six}[1]{{#1}{}_{\scriptscriptstyle{(6)}}}
\newcommand{\tuno}[1]{{#1}{}{}^{\scriptscriptstyle{<1>}}}
\newcommand{\tdue}[1]{{#1}{}{}^{\scriptscriptstyle{<2>}}}
\begin{document}

\begin{abstract}
We study the Ehresmann--Schauenburg bialgebroid of a noncommutative principal bundle as a quantization of the classical gauge groupoid of a principal bundle. When the base algebra is in the centre of the total space algebra, the gauge group of the noncommutative principal bundle is isomorphic to the group of bisections of the bialgebroid. In particular we consider Galois objects (non-trivial noncommutative bundles over a point in a sense) for which the bialgebroid is 
a Hopf algebra. For these we give a crossed module structure for the bisections and the automorphisms of the bialgebroid. Examples include Galois objects of group Hopf algebras and of Taft algebras. 
\end{abstract}

\maketitle
\tableofcontents
\parskip = .75 ex

\section{Introduction}

The study of groupoids on one hand and gauge theories on the other hand is important in different area of mathematics and physics. In particular these subjects meet in the notion of the gauge groupoid associated to a principal bundle. In view of the considerable amount of recent work on noncommutative principal bundles it is important to come up with a noncommutative version of groupoids and their relations to noncommutative principal bundles, for which one needs to have a better understanding of bialgebroids. 

In this paper, we will consider the Ehresmann--Schauenburg bialgebroid associated with a noncommutative principal bundle as a quantization of the classical gauge groupoid. 
Classically, bisections of the gauge groupoid are closely related to gauge transformations. 
Here we show that under some conditions on the base space algebra of the noncommutative principal bundle,  
the gauge group of the principal bundle is group isomorphic to the group of bisections of the corresponding 
Ehresmann--Schauenburg bialgebroid.

For a bialgebroid there is a notion of coproduct and counit but in general not of an antipode. An antipode can be defined for the Ehresmann--Schauenburg bialgebroid of a Galois object which (the bialgebroid that is) is then a Hopf algebra. 
Now, with $H$ a Hopf algebra, a $H$-Galois object is a noncommutative principal bundle over a point in a sense: a $H$-Hopf--Galois extension of the ground field $\IC$. In contrast to the classical situation were a bundle over a point is trivial, for the isomorphism classes of noncommutative principal bundles over a point this need not be the case. Notable examples are group Hopf algebras $\IC[G]$, whose corresponding principal bundle are $\IC[G]$-graded algebras and are classified by the cohomology group $H^2(G, \IC^{\times})$, and Taft algebras $T_{N}$; the equivalence classes of  $T_{N}$-Galois objects are in bijective correspondence with the abelian group $\IC$.

Thus the central part of the paper is dedicated to the Ehresmann--Schauenburg bialgebroid of a Galois object and to the study of the corresponding groups of bisections, be they algebra maps from the bialgebroid to the ground field (and thus characters) or more general transformations. These are in bijective correspondence with the group of gauge transformations of the Galois object. We study in particular the case of Galois objects for $H$ a general cocommutative Hopf algebra and in particular a group Hopf algebra. A nice class of examples comes from Galois objects and corresponding Ehresmann--Schauenburg bialgebroids for the Taft algebras $T_{N}$ an example that we work out in full details. 

Automorphisms of a (usual) groupoid  with natural transformations form a strict 2-group or, equivalently, a crossed module. 
 The crossed module involves the product of bisections and the composition of automorphisms, together with the action of automorphisms on bisections by conjugation. Bisections are the 2-arrows from the identity morphisms to automorphisms, and the composition of bisections can be viewed as the horizontal composition of 2-arrows. In the present paper this construction is extended to the Ehresmann--Schauenburg bialgebroid of a Hopf--Galois extension by constructing a crossed module for the bisections and the automorphisms of the bialgebroid. 

The paper is organised as follow: In \S 2 we recall all the relevant concepts and notation on noncommutative principal bundles (Hopf--Galois extensions), gauge groups and bialgebroids that we need. In \S 3, we first have Ehresmann--Schauenburg bialgebroids and the group of their bisections, then we show that when the base algebra belongs to the centre of the total space algebra, the gauge group of a noncommutative principal bundle  is group isomorphic to the group of bisections of the corresponding Ehresmann--Schauenburg bialgebroid. In \S 4 we consider Galois objects, which can be viewed as noncommutative principal bundles over a point. Several examples are studied here, such as Galois objects for a cocommutative Hopf algebra, in particular group Hopf algebras, regular Galois objects (Hopf algebras as self-Galois objects) and Galois objects of Taft algebras. In \S 5, we study the crossed module structure in terms of the bisection and the automorphism groups of an Ehresmann--Schauenburg bialgebroid.  When restricting to a Hopf algebra, we show that characters and automorphisms also form a crossed module structure; and this can generate the representation theory of 2-groups (or crossed modules) on Hopf algebras. We work out in details this construction for the Taft algebras.

%

\section{Algebraic preliminaries} 
We recall here known facts from algebras and coalgebras and corersponding modules and comodules.
We also recall the more general notions of rings and corings over an algebra as well as the associated notion of bialgebroid.
We move then to Hopf--Galois extensions, as noncommutative principal bundles and the definitions of gauge groups.

\subsection{Algebras, coalgebras and all that}\label{acat}
To be definite we work over the field $\IC$ of complex numbers but in the following this could be substituted by any field $k$. Algebras are assumed to be unital and associative with morphisms of algebras taken to be unital,
and co-algebras are assumed to be counital and coassociative with morphism of co-algebras taken to be co-unital.
For the coproduct of a coalgebra $\Delta : H \to H \ot H$ we use the Sweedler notation
$\Delta(h)=\one{h}\ot\two{h}$ (sum understood), and
its iterations: $\Delta^n=(\id \ot  \Delta_H) \circ\Delta_H^{n-1}: h \mapsto \one{h}\ot \two{h} \ot
\cdots \ot h_{\scriptscriptstyle{(n+1)\;}}$.
We denote by $*$ the convolution product in the dual vector space
$H':=\mathrm{Hom}(H,\IC)$, $(f * g) (h):=f(\one{h})g(\two{h})$.

Given an algebra $A$, a left $A$-module is a vector space $V$ carrying a left $A$-action, that is with a $\IC$-linear map
$\triangleright_V: A \ot  V \to V$ such that
\beq
(a b) \triangleright_V v = a \triangleright_V (b \triangleright_V v ) ~,\quad 1 \triangleright_V v = v \, .
\eeq
Dually, with a bialgebra $H$, a right $H$-comodule is a vector space $V$ carrying a
right $H$-coaction, that is with a $\IC$-linear map $\delta^V : V\to V\ot  H$
such that
\begin{align}
(\id\ot  \Delta)\circ \delta^V & = (\delta^V\ot  \id)\circ \delta^V ~,  \quad
(\id\ot  \epsilon) \circ \delta^V = \id ~. \label{eqn:Hcomodule}
\end{align}
In Sweedler notation, $v\mapsto \delta^V (v) = \zero{v}\ot  \one{v}$, and the right $H$-comodule properties
read,
\begin{align*}
\zero{v} \ot  \one{(\one{v})}\ot  \two{(\one{v})} &= \zero{(\zero{v})} \ot \one{(\zero{v})} \ot  \one{v} =: \zero{v} \ot\one{v} \ot \two{v} ~, \\
\zero{v} \,\epsilon (\one{v}) &= v ~,
\end{align*}
for all $v\in V$.
The $\IC$-vector space tensor product $V\ot  W$ of two $H$-comodules is a $H$-comodule with the right tensor product $H$-coaction
\begin{align}\label{deltaVW}
 \delta^{V\ot  W} :V\ot  W \longrightarrow  V\ot  W\ot  H~, \quad
 v\ot  w \longmapsto \zero{v}\ot  \zero{w} \ot
 \one{v}\one{w} ~.
\end{align}
a $H$-comodule map $\psi:V\to W$  between two $H$-comodules is a $\IC$-linear map
$\psi:V\to W$ which is $H$-equivariant (or $H$-colinear), that is, $\delta^W\circ \psi=(\psi\ot \id)\circ \delta^V$.

In particular, a right $H$-comodule algebra is an algebra  $A$  which is a right $H$-comodule such that the multiplication and unit of $A$ are morphisms of $H$-comodules.
This is equivalent to requiring the coaction $\delta^A: A\to A\ot  H$ to be
a morphism of unital algebras (where $A\ot  H$ has the usual tensor
product algebra structure).
Corresponding morphisms are $H$-comodule maps which are also algebra maps.

In the same way, a right $H$-comodule coalgebra is a coalgebra $C$ which is a right $H$-comodule and such that the
coproduct and the counit of $C$ are morphisms of $H$-comodules. Explicitly, this means that, for each $c \in C$,
\begin{align*}
\zero{(\one{c})} \ot \zero{(\two{c})} \ot \one{(\one{c})} \one{(\two{c})}
& =
\one{(\zero{c})} \ot \two{(\zero{c})} \ot \one{c} \, ,
\\
\epsilon(\zero{c}) \one{c} & =\epsilon(c) 1_H \, .
\end{align*}
Corresponding morphisms are $H$-comodule maps which are also coalgebra maps.
Clearly, there are right $A$-modules and left $H$-comodule versions of the above ones.

Next, let $H$ be a bialgebra and let $A$ be a right $H$-comodule algebra.
An $(A,H)$-relative Hopf module $V$ is a right $H$-comodule with a compatible left $A$-module structure.
That is  the left action $\triangleright_V: A\ot  V \to V$ is a morphism of $H$-comodules, 
$ \delta^V \circ \triangleright_V = (\triangleright_V \ot \id) \circ \delta^{A \ot V}$.
Explicitly, for all $a\in A$ and $v\in V$,
\beq\label{eqn:modHcov}
\zero{(a \triangleright_V v)} \ot \one{(a \triangleright_V v)} = \zero{a} \triangleright_V \zero{v} \ot \one{a}\one{v} ~.
\eeq
A morphism of
$(A,H)$-relative Hopf modules is a morphism of right $H$-comodules  which is also a morphism of left $A$-modules.
In a similar way one can consider the case for the algebra $A$ to be acting on the right, or  with a left and right 
$A$-actions.

We end this part on preliminaries by recalling the notions of bialgebroids (cf. \cite{Boehm}, \cite{BW}).

For an algebra $B$ a {\em $B$-ring} is a triple $(A,\mu,\eta)$. Here $A$ is a $B$-bimodule with $B$-bimodule maps
$\mu:A\ot_ {B} A \to A$ and $\eta:B\to A$, satisfying the following associativity
\begin{equation}\label{eq:cat.associ}
\mu\circ(\mu\ot _{B}  \id_A)=\mu\circ (\id_A \ot _{B} \mu)
\eeq
and unit conditions,
\beq
\mu\circ(\eta \ot _{B} \id_A)=\id_A=\mu\circ (\id_A\ot _{B} \eta).
\end{equation}
A morphism of $B$-rings $f:(A,\mu,\eta)\to (A',\mu',\eta')$ is an
$B$-bimodule map $f:A \to A'$, such that
$f\circ \mu=\mu'\circ(f\ot_{B} f)$ and $f\circ \eta=\eta'$.

From \cite[Lemma 2.2]{Boehm} there is a bijective correspondence between $B$-rings $(A,\mu,\eta)$ and algebra morphisms
$\eta : B \to A$. Starting with a $B$-ring $(A,\mu,\eta)$, one obtains a multiplication map $A \ot A \to A$ by composing the canonical surjection $A \ot A \to A\ot_B A$ with the map $\mu$. Conversely, starting with an algebra map $\eta : B \to A$, a $B$-bilinear associative multiplication $\mu:A\ot_ {B} A \to A$ is obtained from the universality of the coequaliser $A \ot A \to A\ot_B A$ which identifies an element $ a r \ot a'$ with $ a \ot r a'$.

Dually,  for an algebra $B$ a {\em $B$-coring} is a
triple $(C,\Delta,\epsilon)$. Here $C$ is a $B$-bimodule with $B$-bimodule maps
$\Delta:C\to C\ot_{B} C$ and $\epsilon: C\to B$, satisfying the following
coassociativity and counit conditions,
\begin{align}\label{eq:cat.coassoci}
(\Delta\ot _{B} \id_C)\circ \Delta = (\id_C \ot _{B} \Delta)\circ \Delta, \quad
(\epsilon \ot _{B} \id_C)\circ \Delta = \id_C =(\id_C \ot _{B} \epsilon)\circ \Delta.
\end{align}

A morphism of $B$-corings $f:(C,\Delta,\epsilon)\to
(C',\Delta',\epsilon')$ is a $B$-bimodule map $f:C \to C'$, such that
$\Delta'\circ f=(f\ot_{B} f)\circ \Delta$ and
$\epsilon' \circ f =\epsilon$.

Finally, let $B$ be an algebra.
A {\em left $B$-bialgebroid} $\cL$ consists of an $(B\ot B^{op})$-ring
together with a $B$-coring structures on the same vector space $\cL$ with mutual compatibility conditions.
From what said above, an $(B\ot B^{op})$-ring $\cL$ is the same as an algebra map $\eta : B \ot B^{op} \to \cL$.
Equivalently, one may consider the restrictions
$$
s := \eta ( \, \cdot \, \ot_B 1_B ) : B \to \cL \quad \mbox{and} \quad t := \eta ( 1_B \ot_B  \, \cdot \, ) : B^{op} \to \cL
$$
which are algebra maps with commuting ranges in $\cL$, called the \emph{source} and the \emph{target} map of the
$(B\ot B^{op})$-ring $\cL$. Thus a $(B\ot B^{op})$-ring is the same as a triple $(\cL,s,t)$ with $\cL$ an algebra and $s: B \to \cL$
and $t: B^{op} \to \cL$ both algebra maps with commuting range.

Thus, for a left $B$-bialgebroid $\cL$ the compatibility conditions are required to be
\begin{itemize}
\item[(i)] The bimodule structures in the $B$-coring $(\cL,\Delta,\epsilon)$ are
related to those of the $B\ot B^{op}$-ring $(\cL,s,t)$ via
\begin{equation}\label{eq:rbgd.bimod}
b\triangleright a \triangleleft b':= s(b) t(b')a \, \qquad \textrm{for} \,\, b, b'\in B, \, a\in \cL.
\end{equation}

\item[(ii)] Considering $\cL$ as a $B$-bimodule as in \eqref{eq:rbgd.bimod},
  the coproduct $\Delta$ corestricts to an algebra map from $\cL$ to
\begin{equation}\label{eq:Tak.prod}
\cL \times_{B} \cL := \left\{\ \sum\nolimits_j a_j\ot_{B} a'_j\ |\ \sum\nolimits_j a_jt(b) \ot_{B} a'_j =
\sum\nolimits_j a_j \ot_{B}  a'_j s(b), \,\, \mbox{for all}  \,\, b \in B\ \right\},
\end{equation}
where $\cL \times_{B} \cL$ is an algebra via component-wise multiplication.
\\
\item[(iii)] The counit $\epsilon : \cL \to B$ is a left character on the $B$-ring $(\cL,s,t)$, that is it satisfies the properties,
for $b\in B$ and $a,a' \in \cL$,
\begin{itemize}
\item[(1)] $\epsilon(1_{\cL})=1_{B}$,  \qquad (unitality)
\item[(2)] $\epsilon(s(b)a)=b\epsilon(a) $,  \qquad (left $B$-linearity)
\item[(3)] $\epsilon(as(\epsilon(a')))=\epsilon(aa')=\epsilon(at (\epsilon(a')))$,  \qquad (associativity) \, .
\end{itemize}
\end{itemize}

We finish this part with an additional concept that we shall use later on in Section \ref{cmhg}.
\begin{defi}\label{amoeba}
Let $(\cL, \Delta, \epsilon,s,t)$ be a left bialgebroid over the algebra $B$. 
An automorphism of the bialgebroid $\cL$ is a pair $(\Phi, \varphi)$
of algebra automorphisms, $\Phi: \cL \to \cL$, $\varphi : B \to B$ such that:

\begin{enumerate}[(i)]

\item $\Phi\circ s = s\circ \varphi $;
\item $\Phi\circ t = t\circ \varphi $;
\item $(\Phi\ot_{B} \Phi)\circ \Delta = \Delta \circ \Phi $;  
\item $\epsilon\circ \Phi= \varphi\circ \epsilon $.
 \end{enumerate}
\end{defi}
\noindent
In fact the map $\varphi$ is uniquely determined by $\Phi$ via $\varphi = \epsilon \circ \Phi \circ s$ 
and one can just say that $\Phi$ is a bialgebroid automorphism. Automorphisms of a bialgebroid $\cL$
form a group by composition that we simply denote $\Aut(\cL)$. 

\begin{rem}\label{newbimodule}
Here the pair of algebra maps $(\Phi, \varphi)$ can be viewed as a bialgebroid map (cf. \cite{kornel}, \S 4.1) between two copies of $\cL$ with different source and target map (and so $B$-bimodule structure). If $s, t$ are the source and target maps on $\cL$, one defines new source and target maps on $\cL$ 
by $s':=s\circ \varphi $ and $t':=t\circ \varphi $ with the new bimodule structure given by $b\triangleright_{\varphi}c\triangleleft_{\varphi}\tilde{b}:=s'(b)t'(\tilde{b})a$, for any $b, \tilde{b}\in B$ and $a\in \cL$ (see \eqref{eq:rbgd.bimod}).
Therefore we get a new left bialgebroid with product, unit, coproduct and counit not changed.

Clearly, from conditions (i) and (ii) $\Phi$ is a $B$-bimodule map: 
$\Phi(b \triangleright c\triangleleft \tilde{b})=b \triangleright_{\varphi}\Phi(c)\triangleleft_\varphi \tilde{b}$. The condition (iii) is well defined once the conditions (i) and (ii) are satisfied (the balanced tensor product in (iii) is induced by $s'$ and $t'$). Condition (iii) and (iv) imply $\Phi$ is a coring map, therefore $(\Phi, \varphi)$ is an isomorphism between the starting and the new bialgebroids.
\end{rem}

\subsection{Hopf--Galois extensions}\label{sec:hge}
In this section we will give a brief recall of Hopf--Galois extensions,
as noncommutative principal bundles. These are $H$-comodule algebras
$A$ with a canonically defined map $\chi: A\ot_B A\to A\ot H$
which is required to be invertible.

\begin{defi} \label{def:hg}
Let $H$ be a Hopf algebra and let $A$ be a $H$-comodule algebra with coaction $\delta^A$.
Consider the subalgebra $B:= A^{coH}=\big\{b\in A ~|~ \delta^A (b) = b \ot 1_H \big\} \subseteq A$ of coinvariant elements
and the corresponding balanced tensor product $A \ot_B A$.
The extension $B\subseteq A$ is called a $H$-\textup{Hopf--Galois extension} if the
\textit{canonical Galois map}
\begin{align}\label{canonical}  \chi := (m \ot \id) \circ (\id \ot _B \delta^A ) : A \ot _B A \longrightarrow A \ot H
~ ,
\quad a' \ot_B a  &\mapsto a' a_{\;(0)} \ot a_{\;(1)}
\end{align}
is bijective.
\end{defi}
\begin{rem}
In the following we shall  assume that for the Hopf Galois extension
$B\subseteq A$, the algebra $A$ is \emph{faithfully flat} as a left $B$-module.
This means, for instance, that taking the tensor product $\ot_B A$ with a sequence of right $B$-modules 
produces an exact sequence if and only if the original sequence is exact. 
\end{rem}

The canonical map $\chi$ is a morphism of relative Hopf modules for $A$-bimodules and right
$H$-comodules  (cf. \cite{ppca}).
Both $A \ot _B A$ and $A \ot H$ are $A$-bimodules.
The left $A$-module structures are the left multiplication on the first factors while the
right $A$-actions are:
$$
(a \ot_B a')a'':=  a \ot_B a'a'' \quad \mbox{and} \quad (a \ot h) a' := a \zero{a'} \ot h \one{a'}~.
$$
As for the $H$-comodule structure, the natural right tensor product $H$-coaction as in \eqref{deltaVW}:
\beq\label{AAcoact}
\delta^{A\ot  A}: A\ot  A\to A\ot  A\ot  H, \quad a\ot  a'  \mapsto  \zero{a}\ot  \zero{a'} \ot   \one{a}\one{a'} \,
\eeq
for all $a,a'\in A$, descends to the quotient $A\ot_B A$ because $B\subseteq A$ is the
subalgebra of $H$-coinvariants.
Similarly,  $A \ot H$ is endowed with the tensor product coaction, where   one regards the Hopf algebra $H$ as a right $H$-comodule
with the right adjoint $H$-coaction
\beq\label{adj}
\mathrm{Ad} :
h \longmapsto \two{h}\ot  S(\one{h})\,\three{h} ~.
\eeq
The right $H$-coaction on $A \ot H$
is then given, for all $ a\in A,~ h \in H$ by
\beq\label{AHcoact}
\delta^{A\ot  H}(a\ot  h) = \zero{a}\ot  \two{h} \ot  \one{a}\,S(\one{h})\, \three{h} \in A \ot H \ot H ~.
\eeq

Since the canonical Galois map $\chi$ is left $A$-linear, its inverse is
determined by the restriction $\tau:=\chi^{-1}_{|_{1_A \ot H}}$, named \textit{translation map},
\begin{eqnarray*}
\tau =\chi^{-1}_{|_{1_A \ot H}} :  H\to A\ot _B A ~ ,
\quad h \mapsto \tuno{h} \ot_B \tdue{h}~.
\end{eqnarray*}
The translation map enjoys a number of properties 
that we list here for later use. Firstly, it was shown in \cite[Prop. 3.6]{brz-tr} that,
$$
(\id \ot_B \delta^A)\circ \tau =(\tau \ot \id) \circ \Delta \, , \quad
 (  \tau  \ot  S) \circ \mbox{flip} \circ \Delta= (\id \ot \mbox{flip}) \circ (\delta^A \ot_B \id) \circ \tau  ~.
$$
On an element $h \in H$ these respectively read
\begin{align}
\tuno{h} \ot_B \zero{\tdue{h}} \ot \one{\tdue{h}} &= \tuno{\one{h}} \ot_B \tdue{\one{h}} \ot \two{h} \label{p4} ~, \\
\zero{\tuno{h}} \ot_B {\tdue{h}} \ot \one{\tuno{h}} &= \tuno{\two{h}}  \ot_B \tdue{\two{h}} \ot S(\one{h}) ~. \label{p1}
\end{align}
Furthermore, from \cite[Lemma 34.4]{BW}, for any $a\in A$ and $h, k\in H$, we have the following:
\begin{align}
\label{p7}
\tuno{h}\zero{\tdue{h}}\ot \one{\tdue{h}} &= 1_{A} \ot h ~, \\
\label{p5}
\tuno{h}\tdue{h} &= \epsilon(h)1_{A} ~, \\
\label{p2}
\tuno{(h k)}\ot_{B}\tdue{(h k)} &= \tuno{k}\tuno{h}\ot_{B}\tdue{h}\tdue{k} ~, \\
\label{p6}
\tuno{\one{h}}\ot_{B}\tdue{\one{h}}\tuno{\two{h}}\ot_{B}\tdue{\two{h}} & =\tuno{h}\ot_{B}1_{A}\ot_{B}\tdue{h} ~, \\
\label{p3}
\zero{a}\tuno{\one{a}}\ot_{B}\tdue{\one{a}} &= 1_{A} \ot_{B}a ~, 
\end{align}
for any $h, k\in H$ and $a\in A$.

Two $H$-Hopf--Galois extensions $A,A' $ of an 
algebra $B$ are isomorphic provided there exists an isomorphism of
$H$-comodule algebras $A \to A'$. This is the algebraic counterpart
for noncommutative principal bundles of the geometric
notion of isomorphism of principal $G$-bundles with fixed base
  space.
As in the geometric case this notion is relevant in the
classification of noncommutative principal bundles,
(cf. \cite{kassel-review}).

In the present paper we shall be mainly interested in \emph{Galois object}: given a Hopf algebra $H$, a $H$-Galois object 
is a 
$H$-Hopf--Galois extension of $\IC$. These could be thought of a noncommutative principal bundle over a point. It is well known
(cf. \cite{kassel-review}) that the set $\mbox{Gal}_H(\IC)$ of isomorphic classes of
$H$-Galois objects need not be trivial. This is in contrast to the fact that any (usual) fibre bundle over a point is trivial.

\subsection{The gauge groups}
In \cite{brz-tr} gauge transformations for a noncommutative principal bundles were defined to be
invertible and unital comodule maps, with no additional requirement. In particular they were not
asked to be algebra morphisms. A drawback of this approach is that the resulting gauge group might be
very big, even in the classical case; for example the gauge group of the a $G$-bundle over a point would be
much bigger than the structure group $G$. On the other hand, in \cite{pgc} gauge transformations were
required to be algebra homomorphisms. This implies in particular that they are invertible.

In the line of the paper \cite{pgc} we are lead to the following definition.

\begin{defi}\label{def:vgaugegroup}
Given a Hopf--Galois extension $B=A^{coH}\subseteq A$. Consider the collection
\begin{equation}\label{Acomm}
\Aut_H(A):=\{ F \in \Hom_{\A^H}(A, A) \, |  \,\, F{|_B} \in \Aut(B)\},
\end{equation}
of right $H$-comodule unital algebra morphisms of $A$ which restrict to automorphisms of $B$,
and the sub-collection
\begin{equation}
\label{Avcomm}
\Aut_{ver}(A) :=\{ F \in \Aut_H(A) \, |  \,\, F{|_B}=\id_{B}\}.
\end{equation}
of `vertical' ones, that is that in addition are left $B$-module morphisms.
\end{defi}
\noindent
Thus elements $F\in\Aut_H(A)$ preserve the (co)-action of the structure quantum group since they are such that
$\delta^A \circ F = (F \ot \id) \delta^A$ (or $\zero{F(a)} \ot \one{F(a)} = F(\zero{a}) \ot \one{a}$). 
And if in $\Aut_{ver}(A)$ they also preserve the base space algebra $B$.
These will be called the gauge group and the vertical gauge group respectively: in parallel with \cite[Prop. 3.6]{pgc}, $\Aut_H(A)$ and $\Aut_{ver}(A)$ are groups when $B$ is restricted to be in the centre of $A$
by the following proposition:

\begin{prop}\label{prop:gsogg} Let $B=A^{coH}\subseteq A$ be a $H$-Hopf--Galois extension with $B$ in the centre of $A$.
Then $\Aut_H(A)$ is a group with respect to the composition of maps
$$
F \cdot G:= G \circ F
$$
for all  $F , G \in\Aut_H(A)$.
For $F\in\Aut_H(A)$ its inverse $F^{-1} \in \Aut_H(A)$  is given by
\beq\label{inv-F2}
F^{-1}:=m\circ ((F{|_B})^{-1}\ot \id)\circ (m\ot \id )\circ (\id\ot F\ot_B\id)\circ (\id\ot \tau)\circ \delta^A\:
\eeq
where $\tau$ is the translation map, that is for all $a\in A$,
\beq\label{inv-F2b}
F^{-1}(a):= (F{|_B})^{-1} \big(\zero{a} F (\tuno{\one{a}})\big) \, \tdue{\one{a}} \, .
\eeq
In particular the vertical homomorphisms $\Aut_{ver}(A)$ form a subgroup of $\Aut_H(A)$.
\end{prop}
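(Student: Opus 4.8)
The plan is to treat the monoid axioms quickly and to concentrate all the work on the construction and verification of the inverse. If $F,G\in\Aut_H(A)$ then $G\circ F$ is a composite of unital $H$-comodule algebra maps, hence again a unital $H$-comodule algebra map; and since $F(B)\subseteq B$ with $F{|_B}\in\Aut(B)$, it restricts to $(G{|_B})\circ(F{|_B})\in\Aut(B)$, so $G\circ F\in\Aut_H(A)$. Associativity of $F\cdot G:=G\circ F$ is clear and $\id_A$ is a two-sided unit, so $(\Aut_H(A),\cdot)$ is a monoid. It remains to show that each $F$ is invertible; writing $I(F)$ for the map defined by the right-hand side of \eqref{inv-F2b}, I will prove the two facts that $I(G)\in\Aut_H(A)$ and $G\circ I(G)=\id_A$ hold \emph{for every} $G\in\Aut_H(A)$, and then deduce invertibility of $F$ by a cancellation argument, which conveniently sidesteps the more delicate direct computation of $I(F)\circ F$.

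The first and main obstacle is well-definedness of \eqref{inv-F2b}: the symbol $(F{|_B})^{-1}$ can only be applied to $\zero{a}\,F(\tuno{\one{a}})$ once one knows the key lemma that $u_a:=\zero{a}\,F(\tuno{\one{a}})$ lies in $B=A^{coH}$. I would prove this by computing $\delta^A(u_a)$ from the fact that $\delta^A$ is an algebra map and $F$ is colinear, $\delta^A\circ F=(F\ot\id)\circ\delta^A$, and then using the translation-map identity \eqref{p1} to rewrite the coaction on the leg $\tuno{\one{a}}$; the $H$-component collapses to $1_H$ after \eqref{p5}, so that $\delta^A(u_a)=u_a\ot 1_H$. (The grouplike case $u_g=g\,F(g^{-1})$, coinvariant by a one-line check, is a faithful model of this computation.) One must also verify that \eqref{inv-F2b} is independent of the representative of $\tau(\one{a})=\tuno{\one{a}}\ot_B\tdue{\one{a}}$; this holds because $F$ and $(F{|_B})^{-1}$ are algebra maps with $F(B)=B$, so a balancing factor $b\in B$ passes through both and the expression is unchanged.

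Granting the lemma, I would check that $I(G)\in\Aut_H(A)$ for every $G$. Unitality is immediate from $\tau(1_H)=1_A\ot_B 1_A$, and for $b\in B$ one has $\one{b}=1_H$, whence $I(G)(b)=(G{|_B})^{-1}(b)$; thus $I(G){|_B}=(G{|_B})^{-1}\in\Aut(B)$. Colinearity of $I(G)$ follows from colinearity of $G$ together with the identity \eqref{p4}, which governs the coaction on the second leg $\tdue{\one{a}}$. The one place where the hypothesis that $B$ lies in the centre of $A$ is genuinely needed is multiplicativity: writing $I(G)(a)=(G{|_B})^{-1}(u_a)\,\tdue{\one{a}}$ with $u_a\in B$ central, the factor $(G{|_B})^{-1}(u_{a'})\in B$ commutes past $\tdue{\one{a}}$, while \eqref{p2} and colinearity give $u_{aa'}=\zero{a}\,u_{a'}\,G(\tuno{\one{a}})=u_{a'}u_a=u_au_{a'}$, again by centrality of $u_{a'}$; combining these yields $I(G)(a)\,I(G)(a')=(G{|_B})^{-1}(u_au_{a'})\,\tdue{\one{a}}\tdue{\one{a'}}=I(G)(aa')$. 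Without centrality neither rearrangement is available, which is exactly why the hypothesis is imposed.

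Finally, the clean half of the inverse property is $G\circ I(G)=\id_A$. Applying the algebra map $G$ to \eqref{inv-F2b} and using that $G$ undoes $(G{|_B})^{-1}$ on the coinvariant $u_a$ gives $G(I(G)(a))=\zero{a}\,G(\tuno{\one{a}})\,G(\tdue{\one{a}})=\zero{a}\,G(\tuno{\one{a}}\tdue{\one{a}})$, which by \eqref{p5} equals $\zero{a}\,\epsilon(\one{a})\,1_A=a$. Now I conclude by cancellation: put $\Psi:=I(F)\in\Aut_H(A)$, so that $F\circ\Psi=\id_A$; applying the same facts to $\Psi$ produces $I(\Psi)\in\Aut_H(A)$ with $\Psi\circ I(\Psi)=\id_A$, and then $F=F\circ(\Psi\circ I(\Psi))=(F\circ\Psi)\circ I(\Psi)=I(\Psi)$, whence $\Psi\circ F=\Psi\circ I(\Psi)=\id_A$. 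Thus $\Psi=I(F)$ is a two-sided inverse of $F$ under composition, so $F\cdot\Psi=\Psi\circ F=\id_A=F\circ\Psi=\Psi\cdot F$ and $\Aut_H(A)$ is a group. The vertical case is then immediate: if $F{|_B}=\id_B$ then $(F{|_B})^{-1}=\id_B$, so $I(F){|_B}=\id_B$ and $I(F)\in\Aut_{ver}(A)$; since $\Aut_{ver}(A)$ contains $\id_A$ and is closed under $\cdot$, it is a subgroup. I expect essentially all the difficulty to sit in the well-definedness lemma $u_a\in B$ and in the bookkeeping of the balanced tensor products, the remaining steps being controlled applications of the translation-map identities \eqref{p1}--\eqref{p7}.
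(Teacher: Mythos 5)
Your proposal is correct, and for the hard computations (coinvariance of $\zero{a}F(\tuno{\one{a}})$, multiplicativity of the inverse via \eqref{p2} and centrality of $B$, and $F\circ F^{-1}=\id$ via \eqref{p5}) it runs along the same lines as the paper. You diverge in two places, both legitimately. First, the paper verifies $F^{-1}(F(a))=a$ directly, using $H$-equivariance of $F$ and the identity \eqref{p3}, and then extracts colinearity of $F^{-1}$ as a corollary of $F\circ F^{-1}=\id$; you instead prove colinearity of $I(G)$ directly from \eqref{p4} and then obtain the left-inverse property by the monoid argument ``every element has a right inverse lying in the monoid, hence every element is invertible'' ($F=F\circ\Psi\circ I(\Psi)=I(\Psi)$, so $\Psi\circ F=\id$). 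This trade is clean and avoids one explicit computation at the cost of needing $I(G)\in\Aut_H(A)$ for \emph{all} $G$ before concluding, which you do establish. Second, on well-definedness: the paper does not argue with a bare element $u_a=\zero{a}F(\tuno{\one{a}})$ of $A$ (which is not representative-free on its own), but with the element $\zero{a}F(\tuno{\one{a}})\ot_B\tdue{\one{a}}$ of $A\ot_B A$, showing it is killed by $(\delta^A-\id\ot 1_H)\ot_B\id$ and invoking faithful flatness of $A$ over $B$ to conclude it lies in $B\ot_B A$, after which $(F|_B)^{-1}\ot_B\id$ followed by multiplication makes sense. Your combination of ``$u_a$ is coinvariant'' plus a separate representative-independence check is morally the same but has a mild chicken-and-egg flavour (independence of the representative uses $u_a\in B$, which is only meaningful once a representative-free formulation is fixed); if you write it up, the faithful-flatness exact-sequence formulation is the cleanest way to close that loop. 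One small slip: the collapse of the $H$-leg to $1_H$ in the coinvariance computation comes from the antipode axiom $\one{h}S(\two{h})=\epsilon(h)1_H$ after applying \eqref{p1}, not from \eqref{p5}.
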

\begin{proof}
The group multiplication is clearly well defined with unit the identity map on $A$.
Next, we compute that somewhat `implicitly', $\zero{a}F(\tuno{\one{a}})\ot _{B}\tdue{\one{a}}\in B\ot _{B} A$. Indeed
\begin{align*}
(\delta^A \ot_{B} \id_{A})(\zero{a}F(\tuno{\one{a}})\ot _{B}\tdue{\one{a}})& = \zero{\zero{a}}\zero{F(\tuno{\one{a}})}\ot\one{\zero{a}}\one{F(\tuno{\one{a}})}\ot_{B}\tdue{\one{a}}\\
& = \zero{a}F(\zero{\tuno{\two{a}}})\ot \one{a}\one{\tuno{\two{a}}}\ot_{B}\tdue{\two{a}}\\
& = \zero{a}F(\tuno{\two{\two{a}}})\ot \one{a}S(\one{\two{a}})\ot_{B}\tdue{\two{\two{a}}}\\
& = \zero{a}F(\tuno{\one{a}})\ot 1_{H}\ot_{B}\tdue{\one{a}},
\end{align*}
where the 2nd step uses that $F$ is $H$-equivalent map, the 3rd step uses \eqref{p1}; since $\delta^A$ is right $B$--linear, everything is well defined.
Now, being $B$ the coinvariant subalgebra of $A$ for the coaction, we have the exact sequence,
$$
\begin{array}{*{9}c}
0 &\longrightarrow & B & \stackrel{i}{\longrightarrow}
& A & \stackrel{\delta^A - \id_{A}\ot \id_H}{\longrightarrow} & A\ot H & \longrightarrow & 0
\end{array}
$$
And, since $A$ is faithfully flat as left $B$-module, we also have exactness of the sequence,
$$
\setlength{\arraycolsep}{1pt}
  \begin{array}{*{9}c}
0 &\longrightarrow & B\ot_{B}A & \stackrel{i\, \ot_{B} \id_{A}}{\longrightarrow}
& A\ot_{B}A & \stackrel{(\delta^A - \id_{A} \ot \id_H)\ot_{B}  \id_{A}}{\longrightarrow} & A\ot H\ot_{B}A & \longrightarrow & 0 \, .
  \end{array}
$$
Thus, the equality $(\delta^A \ot_{B} \id_{A})(\zero{a}F(\tuno{\one{a}})\ot _{B}\tdue{\one{a}})
= \zero{a}F(\tuno{\one{a}})\ot 1_{H}\ot_{B}\tdue{\one{a}}$
shows that
$\zero{a}F(\tuno{\one{a}})\ot _{B}\tdue{\one{a}}\in B\ot _{B} A$ and
thus $F^{-1}$ in \eqref{inv-F2b} is well defined.
 Let us check that $F^{-1}$ is an algebra map:
\begin{align*}
F^{-1}(aa')& = (F{|_B})^{-1} \big( \zero{(aa')}F(\tuno{\one{(aa')}})\big) \, \tdue{\one{(aa')}}\\
& = (F{|_B})^{-1} \big(\zero{a}\zero{a'}F(\tuno{\one{a'}})F(\tuno{\one{a}})\big) \, \tdue{\one{a}}\tdue{\one{a}'}\\
& = F^{-1}(a)F^{-1}(a'),
\end{align*}
where the 2nd step uses \eqref{p2}, and the last step uses the fact that $\zero{a'}F(\tuno{\one{a'}})\ot_{B}\tdue{\one{a}'}\in B\ot_{B}A$ and $B$ belongs to the centre of $A$, thus $F^{-1}$ is an algebra map. Also for any $b\in B$,  $F^{-1}(b)=(F{|_B})^{-1}(b)$, so $F^{-1}{|_B}\in\Aut(B)$. Then, for any $a\in A$
\begin{align*}
F^{-1}(F(a))& = (F{|_B})^{-1} \big(\zero{F(a)}F(\tuno{\one{F(a)}})\big) \, \tdue{\one{F(a)}}\\
& = (F{|_B})^{-1} \big(F(\zero{a})F(\tuno{\one{a}})\big) \, \tdue{\one{a}}\\
& = (F{|_B})^{-1} \big(F(\zero{a}\tuno{\one{a}})\big) \, \tdue{\one{a}}\\
& = a,
\end{align*}
where the 2nd step uses the $H$-equivariance of $F$, and the last step uses \eqref{p3}. Finally,
\begin{align*}
F(F^{-1}(a))& = F \big( (F{|_B})^{-1}(\zero{a}F(\tuno{\one{a}}))\tdue{\one{a}} \big)\\
& = \zero{a}F(\tuno{\one{a}})F(\tdue{\one{a}})\\
& = a.
\end{align*}
Thus $F^{-1}$ is the inverse map of $F \in \Aut_H(A)$.
The map $F^{-1}$ is $H$-equivariant as well so it belongs to $\Aut_H(A)$. Indeed, for any $a\in A$ we have
\begin{align*}
\zero{a}\ot\one{a}=\zero{F(F^{-1}(a))}\ot  \one{F(F^{-1}(a))}=F(\zero{F^{-1}(a)})\ot \one{F^{-1}(a)},
\end{align*}
where the last step uses the $H$-equivariance of $F$. Applying $F^{-1}\ot \id_{H}$ on both sides of the last equation we get
\begin{align*}
F^{-1}(\zero{a})\ot\one{a}=\zero{F^{-1}(a)}\ot \one{F^{-1}(a)}
\end{align*}
so $F^{-1}$ is $H$-equivariant. We conclude that $\Aut_H(A )$ is a group.\\
As for the vertical automorphisms, clearly $\Aut_{ver}(A)$ is closed for map compositions and one sees that $F^{-1} \in \Aut_{ver}(A)$ when $F \in \Aut_{ver}(A)$. Thus $\Aut_{ver}(A)$ is also a group.
\end{proof}

\begin{rem}
A similar proposition was first given in \cite{pgc}, for a Hopf algebra $H$ which is a coquasitriangular  Hopf  algebra, and $A$ is a quasi-commutative $H$-comodule algebra. As a consequence, $B$ belongs to the centre of $A$. In the present paper, we only require $B$ to belongs to the centre of $A$ without assuming $H$ to be a coquasitriangular  Hopf  algebra.

For the sake of the present paper, where we are concerned mainly with Galois objects, and seek to study their  gauge groups with relations to bisections of suitable groupoids, there is no restriction in assuming that the base space algebra $B$ be in the centre.
 \end{rem}

\section{Ehresmann--Schauenburg bialgebroids}
To any Hopf--Galois extension $B=A^{co \, H}\subseteq A$ one associates a $B$-coring \cite[\S 34.13]{BW} and a bialgebroid \cite[\S 34.14]{BW}.
These can be viewed as a quantization of the gauge or Ehresmann groupoid that is associated to a principal fibre bundle (cf. \cite{KirillMackenzie}).

\subsection{Ehresmann corings} The coring can be given in two equivalent ways.
Let $B=A^{co \, H}\subseteq A$ be a Hopf--Galois extension with right coaction $\delta^{A} : A \to A \ot H$.
Recall the diagonal coaction \eqref{AAcoact}, given for all $a, a' \in A$ by
$$
\delta^{A\ot  A}: A\ot  A\to A\ot  A\ot  H, \quad a\ot  a'  \mapsto
\zero{a}\ot  \zero{a'} \ot   \one{a}\one{a'} \, ,
$$
with corresponding $B$-bimodule of coinvariant elements,
\beq
(A\ot A)^{coH} = \{a\ot  \tilde{a}\in A\ot  A \, ; \,\, \zero{a}\ot  \zero{\tilde{a}}\ot  \one{a}\one{\tilde{a}}=a\ot  \tilde{a}\ot  1_H \}. \label{ec2}
\eeq
\begin{lem}
Let $\tau$ be the translation map of the Hopf--Galois extension. Then the $B$-bimodule of coinvariant elements in \eqref{ec2} is the same as the $B$-bimodule,
\begin{equation}\label{ec1}
\mathcal{C} :=\{a\ot  \tilde{a}\in A\ot  A : \,\, \zero{a}\ot  \tau(\one{a})\tilde{a}=a\ot  \tilde{a}\ot _B 1_A\}.
\end{equation}
\end{lem}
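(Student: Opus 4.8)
The plan is to exploit the fact that both subspaces are cut out by a single equation, together with the bijectivity of the canonical Galois map $\chi : A \ot_B A \to A \ot H$. Since tensoring a bijection of $\IC$-vector spaces with the identity on another vector space remains a bijection, the map $\id_A \ot \chi : A \ot (A \ot_B A) \to A \ot (A \ot H)$ is bijective. Consequently an equality of two elements of $A \ot A \ot_B A$ holds if and only if their images under $\id_A \ot \chi$ agree in $A \ot A \ot H$. I would therefore apply $\id_A \ot \chi$ to the defining relation of $\mathcal{C}$ in \eqref{ec1} and verify that it becomes precisely the coinvariance relation in \eqref{ec2}, which yields the equivalence of the two conditions in one stroke (no separate inclusions needed).

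The right-hand side is immediate: as $1_A$ is coinvariant, $(\id_A \ot \chi)(a \ot \tilde{a} \ot_B 1_A) = a \ot \tilde{a}\,\zero{(1_A)} \ot \one{(1_A)} = a \ot \tilde{a} \ot 1_H$, the right-hand side of \eqref{ec2}. For the left-hand side I would write $\tau(\one{a})\tilde{a} = \tuno{\one{a}} \ot_B \tdue{\one{a}}\tilde{a}$ and use the definition of $\chi$ to get
\[
(\id_A \ot \chi)\big(\zero{a} \ot \tuno{\one{a}} \ot_B \tdue{\one{a}}\tilde{a}\big) = \zero{a} \ot \tuno{\one{a}}\,\zero{\tdue{\one{a}}}\,\zero{\tilde{a}} \ot \one{\tdue{\one{a}}}\,\one{\tilde{a}}.
\]
The key step is to read the last two legs as a product in the tensor-product algebra $A \ot H$, namely $\tuno{\one{a}}\zero{\tdue{\one{a}}}\zero{\tilde{a}} \ot \one{\tdue{\one{a}}}\one{\tilde{a}} = \big(\tuno{\one{a}}\zero{\tdue{\one{a}}} \ot \one{\tdue{\one{a}}}\big)\,\delta^A(\tilde{a})$. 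Property \eqref{p7} with $h = \one{a}$ gives $\tuno{\one{a}}\zero{\tdue{\one{a}}} \ot \one{\tdue{\one{a}}} = 1_A \ot \one{a}$, so the product collapses to $(1_A \ot \one{a})(\zero{\tilde{a}} \ot \one{\tilde{a}}) = \zero{\tilde{a}} \ot \one{a}\one{\tilde{a}}$. Reinstating the outer leg $\zero{a}$ produces $\zero{a} \ot \zero{\tilde{a}} \ot \one{a}\one{\tilde{a}}$, exactly the left-hand side of \eqref{ec2}.

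Combining these computations, the image under the bijection $\id_A \ot \chi$ of the relation defining $\mathcal{C}$ is identically the relation defining $(A \ot A)^{coH}$, and injectivity of $\id_A \ot \chi$ then forces the two conditions to be equivalent, giving $\mathcal{C} = (A\ot A)^{coH}$. The only genuine obstacle is the bookkeeping in the left-hand side, where the Sweedler indices of $\tdue{\one{a}}$ and of $\tilde{a}$ are entangled; the device that makes it routine is recognising the relevant expression as an honest product in the algebra $A \ot H$ so that \eqref{p7} can be applied to the first factor in isolation. I would also note at the outset that $\id_A \ot \chi$ is well defined on the balanced tensor product (because $\chi$ is already defined on $A \ot_B A$), so that this particular equivalence requires no faithful flatness assumption.
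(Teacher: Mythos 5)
Your proof is correct. The identities you use are exactly the ones the paper relies on (in particular \eqref{p7} for the key collapse $\tuno{\one{a}}\zero{\tdue{\one{a}}}\ot\one{\tdue{\one{a}}}=1_A\ot\one{a}$, and multiplicativity of $\delta^A$ to read the last two legs as a product in $A\ot H$), but you organize the argument differently. The paper proves the two inclusions separately: for $(A\ot A)^{coH}\subseteq\C$ it applies $\id_A\ot\chi$ and uses the coinvariance hypothesis mid-computation, and for the converse it applies $\id_A\ot\chi^{-1}$ to the coinvariance relation, invoking left $A$-linearity of $\chi^{-1}$ together with \eqref{p2} and \eqref{p3}. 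You instead observe that, for an \emph{arbitrary} $a\ot\tilde a\in A\ot A$, the bijection $\id_A\ot\chi$ carries the element $\zero{a}\ot\tau(\one{a})\tilde a$ to $\zero{a}\ot\zero{\tilde a}\ot\one{a}\one{\tilde a}$ and the element $a\ot\tilde a\ot_B 1_A$ to $a\ot\tilde a\ot 1_H$, so injectivity of $\id_A\ot\chi$ makes the two defining equations equivalent in one stroke. This buys you a single computation in place of two and avoids any explicit manipulation of $\chi^{-1}$ (and hence of \eqref{p2} and \eqref{p3}); the paper's version, on the other hand, makes the inverse direction completely explicit, which is occasionally useful when one wants the concrete form of the intermediate elements. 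Your closing remarks --- that $\id_A\ot\chi$ is well defined on the balanced tensor product and that no faithful flatness is needed for this particular lemma --- are both accurate.
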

\begin{proof}
Let $a\ot \tilde{a}\in (A\ot A)^{coH}$. By applying $(\id_{A}\ot \chi)$ on $\zero{a}\ot\tuno{\one{a}}\ot_{B}\tdue{\one{a}}\tilde{a}$, we get
\begin{align*}
    \zero{a}\ot \tuno{\one{a}}\zero{\tdue{\one{a}}}\zero{\tilde{a}}\ot \one{\tdue{\one{a}}}\one{\tilde{a}} & =\zero{a}\ot \zero{\tilde{a}}\ot \one{a}\one{\tilde{a}} \\ &=a\ot \tilde{a}\ot 1_H =a\ot \chi(\tilde{a}\ot_{B} 1_A) \\
    & = (\id_{A}\ot \chi) (a \ot \tilde{a} \ot_{B} 1_A),
\end{align*}
where the first step uses \eqref{p7}. This shows that $(A\ot A)^{coH}\subseteq \C$.\\
Conversely, let $a\ot \tilde{a}\in \C$. By applying $(\id_{A}\ot \chi^{-1})$ on $\zero{a}\ot\zero{\tilde{a}}\ot \one{a}\one{\tilde{a}}$ and using the fact that $\chi^{-1}$ is left $A$-linear and \eqref{p2}, we get
\begin{align*}
    \zero{a}\ot \zero{\tilde{a}} \tuno{\one{\tilde{a}}}\tuno{\one{a}}\ot_{B}\tdue{\one{a}}\tdue{\one{\tilde{a}}}&=\zero{a}\ot\tuno{\one{a}}\ot_{B}\tdue{\one{a}}\tilde{a}\\
    &=a\ot \tilde{a}\ot_{B} 1_A \\ &=(\id_{A}\ot \chi^{-1})(a\ot \tilde{a}\ot 1_H),
\end{align*}
where in the first step \eqref{p3} is used. This shows that $ \C \subseteq (A\ot A)^{coH}$. \end{proof}

We have then the following definition \cite[\S 34.13]{BW}.
\begin{defi}\label{def:ec}
Let $B=A^{co \, H}\subseteq A$ be a Hopf--Galois extension with translation map $\tau$.
If $A$ is faithfully flat as a left $B$-module, then the $B$-bimodule $\mathcal{C}$ in \eqref{ec1} is a $B$-coring with coring coproduct
\beq\label{copro}
\Delta(a\ot  \tilde{a}) = \zero{a}\ot  \tau(\one{a})\ot \tilde{a}
= \zero{a} \ot \tuno{\one{a}} \ot_B \tdue{\one{a}} \ot \tilde{a},
\eeq
and counit
\beq\label{counit}
\epsilon(a\ot  \tilde{a})=a\tilde{a}.
\eeq
\end{defi}
\noindent
By applying the map $m_A\ot \id_H$ to elements of \eqref{ec2}, it is clear that $a\tilde{a}\in B$.
The above $B$-coring is called the \textit{Ehresmann} or \textit{gauge coring}; we denote it $\C(A, H)$.

 Whenever the structure Hopf algebra $H$ has an invertible antipode, the Ehresmann coring can also be given as a cotensor product (see \cite{HM16}).
Indeed, let $H$ be a Hopf algebra with invertible antipode. And
let $B=A^{co \, H}\subseteq A$ be a $H$-Hopf--Galois extension, with right coaction $\delta^{A} : A \to A \ot H$,
$a \mapsto \delta^A (a) = \zero{a}\ot  \one{a}$. Via the inverse of $S$ one has also a left
$H$ coaction ${}^{A}\delta: A\to H\ot A$, ${}^{A}\delta(a):=S^{-1}(\one{a})\ot \zero{a}$. One also shows that
$B:= {}^{coH}A=\big\{b\in A ~|~ {}^{A}\delta (b) = 1_H \ot b \big\}$.
Using the left $B$-linearity of $\delta^A$ and the right $B$-linearity of ${}^{A}\delta$ one has a $B$-bimodule,
\begin{align}\label{ec4}
A \, \square \, {}^H\!\!A & = \ker (\delta^A \ot \id_A - \id_A \ot {}^{A}\delta )  \nn \\
& = \left\{a\ot  \tilde{a}\in A\ot  A : \,\, \zero{a}\ot\one{a}\ot \tilde{a}=a\ot S^{-1}(\one{\tilde{a}})\ot\zero{\tilde{a}} \right\}
\end{align}
\begin{lem}
The bimodule $A \, \square \, {}^H\!\!A$ is the same as the bimodules $\C$ and $(A\ot A)^{coH}$.
\end{lem}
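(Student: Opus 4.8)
The plan is to build on the preceding lemma, which already gives $\C=(A\ot A)^{coH}$; it therefore suffices to prove the single equality $A^H\,\square\,{}^H\!\!A=(A\ot A)^{coH}$. Since all three spaces are subspaces of $A\ot A$ equipped with the $B$-bimodule structure inherited from $A\ot A$, this is purely a matter of showing that the equaliser condition in \eqref{ec4},
\[
\zero a\ot\one a\ot\tilde a=a\ot S^{-1}(\one{\tilde a})\ot\zero{\tilde a},
\]
an identity in $A\ot H\ot A$, is equivalent to the diagonal coinvariance condition \eqref{ec2},
\[
\zero a\ot\zero{\tilde a}\ot\one a\one{\tilde a}=a\ot\tilde a\ot1_H,
\]
an identity in $A\ot A\ot H$. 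The only tools needed are coassociativity of $\delta^A$ and the two antipode identities $\two h\,S^{-1}(\one h)=\epsilon(h)1_H$ and $S^{-1}(\two h)\one h=\epsilon(h)1_H$, both available because $S$ is invertible.

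For the implication \eqref{ec2}$\Rightarrow$\eqref{ec4}, I would apply $\id_A\ot\delta^A\ot\id_H$ to the coinvariance identity and expand the coacted leg by coassociativity, obtaining in $A\ot A\ot H\ot H$
\[
\zero a\ot\zero{\tilde a}\ot\one{\tilde a}\ot\one a\,\two{\tilde a}=a\ot\zero{\tilde a}\ot\one{\tilde a}\ot1_H.
\]
Applying the linear map $x\ot y\ot g\ot h\mapsto x\ot y\ot h\,S^{-1}(g)$ then telescopes the left-hand side through $\two{\tilde a}\,S^{-1}(\one{\tilde a})=\epsilon(\one{\tilde a})1_H$ and leaves, after the counit absorbs into $\zero{\tilde a}$, the identity $\zero a\ot\tilde a\ot\one a=a\ot\zero{\tilde a}\ot S^{-1}(\one{\tilde a})$ in $A\ot A\ot H$; flipping the last two tensor legs is exactly \eqref{ec4}.

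For the converse \eqref{ec4}$\Rightarrow$\eqref{ec2}, I would apply $\id_A\ot\id_H\ot\delta^A$ to the equaliser identity, expand by coassociativity to reach
\[
\zero a\ot\one a\ot\zero{\tilde a}\ot\one{\tilde a}=a\ot S^{-1}(\two{\tilde a})\ot\zero{\tilde a}\ot\one{\tilde a}
\]
in $A\ot H\ot A\ot H$, and then apply the linear map $x\ot g\ot y\ot h\mapsto x\ot y\ot g h$ that reorders and multiplies the two $H$-legs. The left-hand side becomes the diagonal coaction $\zero a\ot\zero{\tilde a}\ot\one a\one{\tilde a}$, while on the right $S^{-1}(\two{\tilde a})\one{\tilde a}=\epsilon(\one{\tilde a})1_H$ collapses everything to $a\ot\tilde a\ot1_H$, which is \eqref{ec2}.

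The one point requiring care — and the main obstacle — is bookkeeping the Sweedler legs so that the antipode always sits on the side where the telescoping identity applies: since $H$ is noncommutative one must multiply on the correct side (on the right by $S^{-1}(g)$ in the first implication, and in the order $g h$ in the second) so that $\two h\,S^{-1}(\one h)$, respectively $S^{-1}(\two h)\one h$, appears rather than an uncancellable mixed product. This is also where invertibility of $S$ is genuinely used, as the left coaction ${}^A\delta$ defining $A^H\,\square\,{}^H\!\!A$ exists only through $S^{-1}$.
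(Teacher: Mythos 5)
Your proof is correct, but it takes a genuinely different route from the paper's. The paper proves the two inclusions between $\C$ and $A^H \, \square \, {}^H\!\!A$ directly, working from the translation-map characterization \eqref{ec1}: one direction applies $(\id_{A}\ot {}^{A}\delta\ot \id_{A})$ followed by multiplication and invokes the translation-map identities \eqref{p1} and \eqref{p5}; the converse applies $(\id_{A} \ot \tau \ot \id_{A})$ and then uses \eqref{p7} together with injectivity of the canonical map $\chi$. You instead route everything through the diagonal-coinvariance description $(A\ot A)^{coH}$ (quoting the preceding lemma for $\C=(A\ot A)^{coH}$, which is legitimate) and prove $A^H \, \square \, {}^H\!\!A=(A\ot A)^{coH}$ using only coassociativity of $\delta^A$ and the antipode identities $\two{h}\,S^{-1}(\one{h})=\epsilon(h)1_H$ and $S^{-1}(\two{h})\,\one{h}=\epsilon(h)1_H$; I have checked both telescoping computations and the Sweedler bookkeeping (including the side on which $S^{-1}$ must sit) and they are sound. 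What your approach buys is that it makes no use of the Galois condition at all in this step: the identification $A^H \, \square \, {}^H\!\!A=(A\ot A)^{coH}$ is seen to hold for an arbitrary $H$-comodule algebra with bijective antipode, the Hopf--Galois hypothesis entering only via the previous lemma. What the paper's route buys is that it stays in the language of the translation map, which is the form in which the coproduct \eqref{copro} is transported to the equaliser picture in \eqref{copro1} immediately afterwards.
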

\begin{proof}
Let $a\ot \tilde{a}\in\C$. Then, by applying $(\id_{A}\ot \id_H \ot m_{A})\circ (\id_{A}\ot {}^{A}\delta\ot \id_{A})$
on $\zero{a}\ot \tuno{\one{a}}\ot_{B}\tdue{\one{a}}\tilde{a} = a\ot \tilde{a}\ot_{B} 1_A$ we get, for the left hand side,
\begin{align*}
 \zero{a}\ot S^{-1}(\one{\tuno{\one{a}}})\ot \zero{\tuno{\one{a}}}\tdue{\one{a}}\tilde{a}&=\zero{a}\ot S^{-1}(S(\one{\one{a}}))\ot\tuno{\two{\one{a}}}\tdue{\two{\one{a}}}\tilde{a}\\
 &=\zero{a}\ot \one{a}\ot \tilde{a},
\end{align*}
using \eqref{p1} in the first step and \eqref{p5} in the second one. As for the right hand side, we get $a\ot S^{-1}(\one{\tilde{a}})\ot \zero{\tilde{a}}$. Thus $\zero{a}\ot \one{a}\ot \tilde{a} =a\ot S^{-1}(\one{\tilde{a}})\ot \zero{\tilde{a}}$, and
$a\ot \tilde{a}\in A \, \square \, {}^H\!\!A$.

Conversely, assume $a\ot \tilde{a}\in A \, \square \, {}^H\!\!A$.
By applying $(\id_{A}\ot \id_{A}\ot m_{A})\circ (\id_{A} \ot \tau \ot \id_{A})$ on
$\zero{a}\ot \one{a}\ot \tilde{a} =a\ot S^{-1}(\one{\tilde{a}})\ot \zero{\tilde{a}}$, we get
\begin{align}\label{equofec}
    \zero{a}\ot \tuno{\one{a}}\ot_{B} \tdue{\one{a}}\tilde{a}=a\ot \tuno{S^{-1}(\one{\tilde{a}})}\ot_{B}\tdue{S^{-1}(\one{\tilde{a}})}\zero{\tilde{a}}.
\end{align}
Now, using \eqref{p7} in the second step, we have
\begin{align*}
\chi(\tuno{S^{-1}(\one{\tilde{a}})}\ot_{B}\tdue{S^{-1}(\one{\tilde{a}})}\zero{\tilde{a}})&=\tuno{S^{-1}(\one{\tilde{a}})}\zero{\tdue{S^{-1}(\one{\tilde{a}})}}\zero{\zero{\tilde{a}}}\ot \one{\tdue{S^{-1}(\one{\tilde{a}})}}\one{\zero{\tilde{a}}}\\
&=\zero{\tilde{a}}\ot S^{-1}(\two{\tilde{a}})\one{\tilde{a}}=\tilde{a}\ot 1=\chi(\tilde{a}\ot_{B} 1_A),
\end{align*}
From this $\tuno{S^{-1}(\one{\tilde{a}})}\ot_{B}\tdue{S^{-1}(\one{\tilde{a}})}\zero{\tilde{a}}=\tilde{a}\ot_{B}1$
which, when substituting in the right hand side of \eqref{equofec} yields
$\zero{a}\ot \tuno{\one{a}}\ot_{B} \tdue{\one{a}}\tilde{a} = a \ot \tilde{a}\ot_{B} 1_A$. Thus $a\ot \tilde{a}\in \C$.
\end{proof}
\noindent
Finally the coproduct \eqref{copro} translates to the coproduct on $A \, \square \, {}^H\!\!A$ written as,
\beq\label{copro1}
\Delta(a\ot  \tilde{a})=a\ot  \tau(S^{-1}(\one{\tilde{a}}))\ot \zero{\tilde{a}},
\eeq

The Ehresmann coring of a Hopf--Galois extension is
in fact a bialgebroid, called the \textit{Ehresmann--Schauenburg bialgebroid} (cf. \cite[34.14]{BW}).
One see that $\C(A, H) = (A\ot A)^{coH}$ is a subalgebra of $A \ot A^{op}$; indeed, given
$a\ot  \tilde{a}, a' \ot  \tilde{a}' \in (A\ot A)^{coH}$, one computes
$\delta^{A\ot  A}(a a' \ot \tilde{a}' \tilde{a}) =
\zero{a} \zero{a'} \ot \zero{\tilde{a}'} \zero{\tilde{a}}  \ot \one{a}\one{a'} \one{\tilde{a}'} \one{\tilde{a}} =
\zero{a} a' \ot \tilde{a}' \zero{\tilde{a}}  \ot \one{a} \one{\tilde{a}} =
a a' \ot \tilde{a}' \tilde{a} \ot  1_H$.

\begin{defi}\label{def:reb}
Let $\C(A, H)$ be the coring of a Hopf--Galois extension
$B=A^{co \, H}\subseteq A$, with $A$ faithfully flat as a left $B$-module. Then $\C(A, H)$
is a (left) $B$-bialgebroid with product
\beq\label{pro}
(a\ot  \tilde{a})\bullet_{\C(A, H)}({a'}\ot  \tilde{a}')=a a'\ot \tilde{a}'\tilde{a} ,
\eeq
for all $a\ot  \tilde{a}, \, a' \ot  \tilde{a}' \in \C(A, H)$ (and unit $1\ot  1\in A\ot  A$). The target and the  source  maps are given by
\beq\label{sourcetarget}
t(b)=1\ot  b, \quad \textup{and} \quad s(b)=b\ot  1.
\eeq
\end{defi}
\noindent
We refer to \cite[34.14]{BW} for the checking that all defining properties are satisfied.

\subsection{The groups of bisections}
The bialgebroid of a Hopf--Galois extension can be viewed as a quantization (of the dualization)
of the classical gauge groupoid, recalled in Appendix \ref{gaugegroupoid}, of a (classical) principal bundle.
Dually to the notion of a bisection on the classical gauge groupoid there is the notion of a bisection on the
Ehresmann--Schauenburg bialgebroid. And in particular there are vertical bisections. These bisections correspond to automorphisms and vertical automorphisms (gauge transformations) respectively.

\begin{defi}\label{def:bisection}
Let $\C(A, H)$ be the left Ehresmann--Schauenburg bialgebroid associated to a Hopf--Galois extension
$B=A^{coH}\subseteq A$. A bisection of $\C(A, H)$ is a unital algebra map $\sigma: \C(A, H)\to B$, such that $\sigma\circ t=\id_{B}$ and
  $ \sigma\circ s\in\Aut(B)$.
\end{defi}

In general the collections of all bisections do not have additional structure. As a particular case that parallels
Proposition \ref{prop:gsogg} we have the following.
\begin{prop}\label{prop:gsob}
Consider the left Ehresmann--Schauenburg bialgebroid $\C(A, H)$ associated to a Hopf--Galois extension $B=A^{co
  H}\subseteq A$. If $B$ is contained in the centre of $A$, then the set of all bisections of $\C$ is a group,
  denoted $\B(\C(A, H))$, with product defined by
\begin{align}\label{mulbis1}
\sigma_{1}\ast \sigma_{2}(a\ot  \tilde{a}) & :=(\sigma_{2} \circ s) \big(\sigma_{1}(\zero{a}\ot  \tuno{\one{a}})\big)
\, \sigma_{2}(\tdue{\one{a}}\ot \tilde{a}), \nn \\
&\:=  \sigma_{2} \big(\sigma_{1}(\zero{a}\ot  \tuno{\one{a}}) \, \tdue{\one{a}}\ot \tilde{a} \big)\nn \\
&\: =  \sigma_{2} \big(\sigma_{1}(\one{(a\ot  \tilde{a})}) \, \two{(a\ot  \tilde{a})} \big)
\end{align}
for any bisections $\sigma_1$, $\sigma_2$ and any element $a\ot  \tilde{a}\in \C(A, H)$. The unit of this group is the counit of the bialgebroid. And for any bisection $\sigma$, its inverse is given by
\beq\label{invobis1}
\sigma^{-1}(a\ot \tilde{a})=(\sigma\circ s)^{-1} \big(a\sigma(\zero{\tilde{a}}\ot \tuno{\one{\tilde{a}}})\, \tdue{\one{\tilde{a}}}\big).
\eeq
Here $(\sigma\circ s)^{-1}$ is the inverse of  $\sigma\circ s\in\Aut(B)$.
\end{prop}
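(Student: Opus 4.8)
The plan is to verify, in order, that the proposed product in \eqref{mulbis1} is well defined and lands in $B$, that it is associative with the counit $\epsilon$ as a two-sided unit, and finally that the formula \eqref{invobis1} gives a genuine two-sided inverse. Throughout I would lean heavily on the centrality of $B$ in $A$ and on the translation map identities \eqref{p4}--\eqref{p3}, exactly as in the proof of Proposition \ref{prop:gsogg}.

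First I would establish that $\sigma_1 \ast \sigma_2$ is itself a bisection. The equality of the three expressions in \eqref{mulbis1} already uses that $\sigma_2 \circ s$ is an algebra map into $B$ and that $B$ is central, so I would first check that $\sigma_1(\zero{a}\ot \tuno{\one{a}})\, \tdue{\one{a}}\ot \tilde{a}$ genuinely defines an element of $\C(A,H)$ for each $a \ot \tilde{a} \in \C(A,H)$. This is the analogue of the computation in Proposition \ref{prop:gsogg} showing $\zero{a}F(\tuno{\one{a}}) \ot_B \tdue{\one{a}} \in B \ot_B A$: I would apply the diagonal coaction and use $H$-colinearity together with \eqref{p1} to show the result is coinvariant, hence lies in $\C(A,H)$. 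Then $\sigma_1 \ast \sigma_2$ is a composite of algebra maps and so is unital and multiplicative; that it satisfies $(\sigma_1 \ast \sigma_2)\circ t = \id_B$ and $(\sigma_1 \ast \sigma_2)\circ s \in \Aut(B)$ follows by evaluating on $t(b) = 1 \ot b$ and $s(b) = b \ot 1$ and using $\sigma_i \circ t = \id_B$, plus the fact that a composite of automorphisms of $B$ is again one.

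Next I would prove that $\epsilon$ is the unit. Using \eqref{counit}, $\epsilon(a\ot\tilde a)=a\tilde a$, and the third form of \eqref{mulbis1} written as $\sigma_2\big(\sigma_1(\one{(a\ot\tilde a)})\,\two{(a\ot\tilde a)}\big)$, the identities $\sigma\ast\epsilon=\sigma$ and $\epsilon\ast\sigma=\sigma$ should reduce to the counit axioms \eqref{eq:cat.coassoci} for the coring $\C(A,H)$ combined with property \eqref{p5}, $\tuno{h}\tdue{h}=\epsilon(h)1_A$, which collapses the translation factors. Associativity I would handle by writing all three-fold products through the coproduct \eqref{copro} and invoking coassociativity of $\Delta$ together with the multiplicativity of each $\sigma_i$ and $\sigma_i \circ s$; the centrality of $B$ is what lets me freely move the $B$-valued factors $\sigma_i(\cdots)$ past elements of $A$ so that the two bracketings agree.

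The main obstacle, and the step I would spend most care on, is verifying that \eqref{invobis1} is a two-sided inverse. The natural strategy is to compute $\sigma \ast \sigma^{-1}$ and $\sigma^{-1}\ast\sigma$ directly from \eqref{mulbis1} and show both equal $\epsilon$. This will require feeding the explicit formula for $\sigma^{-1}$ into the convolution and then untangling nested applications of the translation map; I expect to need \eqref{p6} and \eqref{p3} to reduce the doubled translation-map expressions, together with \eqref{p5} to produce the counit, and the hypothesis that $\sigma\circ s \in \Aut(B)$ together with centrality to pull $(\sigma\circ s)^{-1}$ through the products. Well-definedness of \eqref{invobis1}, i.e. that $a\sigma(\zero{\tilde a}\ot\tuno{\one{\tilde a}})\,\tdue{\one{\tilde a}}$ is $B$-valued so that $(\sigma\circ s)^{-1}$ can be applied, should again follow from a coinvariance computation mirroring the one in Proposition \ref{prop:gsogg}. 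The bookkeeping of Sweedler indices across the translation map is the genuinely delicate part; everything else is a structured but routine propagation of the algebra and coring axioms.
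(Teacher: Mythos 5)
Your plan follows the paper's proof essentially step for step: direct verification of the group axioms by propagating the translation-map identities \eqref{p4}--\eqref{p3} and the centrality of $B$, with the well-definedness checks done by coinvariance computations modelled on Proposition \ref{prop:gsogg}. Executed carefully, it would go through.

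One step is dispatched too quickly, though. You assert that $\sigma_1\ast\sigma_2$ ``is a composite of algebra maps and so is unital and multiplicative.'' The outer map $\sigma_2$ is an algebra map, but the inner map $c\mapsto \sigma_1(\one{c})\,\two{c}$ (equivalently $c \mapsto s(\sigma_1(\one{c}))\bullet_{\C}\two{c}$) is \emph{not} an algebra map for free: multiplicativity of $\Delta$ only holds into the Takeuchi product $\C\times_B\C$, and to multiply out $s(\sigma_1(c_1))\,c_1'\,s(\sigma_1(c_2))\,c_2'$ into $s(\sigma_1(c_1c_2))\,c_1'c_2'$ you must commute the central element $s(\sigma_1(c_2))\in B\ot 1$ past $c_1'$. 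This is precisely the computation the paper carries out explicitly using \eqref{p2} and centrality; it is the one place where the hypothesis that $B$ is central is indispensable for closure of the product, so it deserves to be written out rather than absorbed into ``composite of algebra maps.'' Relatedly, to conclude that the bisections form a group you must also check that $\epsilon$ and $\sigma^{-1}$ are themselves bisections --- i.e.\ that $\epsilon$ is an algebra map (again via centrality), that $\sigma^{-1}$ is an algebra map, and that $\sigma^{-1}\circ t=\id_B$ and $\sigma^{-1}\circ s=(\sigma\circ s)^{-1}\in\Aut(B)$; your sketch only addresses the two-sided-inverse identity $\sigma\ast\sigma^{-1}=\sigma^{-1}\ast\sigma=\epsilon$. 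These are routine but non-optional verifications, and the paper devotes a substantial part of the proof to them.
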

\begin{proof}
The second equality in \eqref{mulbis1} follows from the fact that bisections are taken to be algebra maps.
The expressions on the right hand side of \eqref{mulbis1} and \eqref{invobis1} are well defined.
For any bisection $\sigma$ and any $b\in B$, $a\in A$ the condition
$\sigma\circ t=\id_{B}$ yields:
\beq\label{test1}
\sigma(\zero{a}\ot  \tuno{\one{a}} \, b) \, \tdue{\one{a}} =
\sigma(\zero{a}\ot  \tuno{\one{a}}) \, b \,\tdue{\one{a}} \, .
\eeq
As for the multiplication in \eqref{mulbis1}: for bisections $\sigma_1$, $\sigma_2$ and any $b\in B$,
we have
\begin{align*}
\sigma_{1}\ast \sigma_{2}(s(b))=\sigma_{1}\ast \sigma_{2}(b\ot 1)=\sigma_2(s(\sigma_1(b\ot 1)))
=(\sigma_2\circ s)\circ(\sigma_1\circ s)(b).
 \end{align*}
Being both $\sigma_1\circ s$ and $\sigma_2\circ s$ automorphisms of $B$, we have $(\sigma_{1}\ast \sigma_{2})\circ s\in\Aut(B)$.
Similarly one shows that $\sigma_{1}\ast \sigma_{2}(t(b))=b$ for $b\in B$, that is $(\sigma_{1}\ast \sigma_{2})\circ t = \id_B$. Also,
the multiplication is associative: let $\sigma_1$, $\sigma_2$, $\sigma_3$ be bisections, and let
$a\ot \tilde{a}\in \C(A, H)$. From
\begin{align*}
((\Delta\ot_{B} \id_{\C(A, H)})\circ \Delta) (a\ot \tilde{a})=\zero{a}\ot  \tuno{\one{a}}\ot _{B}\tdue{\one{a}}\ot\tuno{\two{a}}\ot _{B}\tdue{\two{a}}\ot \tilde{a},
\end{align*}
we have (using in the second step that $\sigma_{3}\circ s$ is an algebra map):
\begin{align*}
((\sigma_1 & \ast \sigma_2) \ast \sigma_3)(a\ot \tilde{a}) \\
&= (\sigma_3 \circ s) \Big( (\sigma_2 \circ s) \big(\sigma_1(\zero{a}\ot  \tuno{\one{a}}) \big) \, \sigma_2(\tdue{\one{a}}\ot\tuno{\two{a}}) \Big)\, \sigma_3(\tdue{\two{a}}\ot \tilde{a})\\
&=(\sigma_3 \circ s) \Big( (\sigma_2 \circ s) \big(\sigma_1(\zero{a}\ot  \tuno{\one{a}}) \big)\Big) \,
(\sigma_3 \circ s) \big(\sigma_2(\tdue{\one{a}}\ot\tuno{\two{a}}) \big)\, \sigma_3(\tdue{\two{a}}\ot \tilde{a})\\
&=((\sigma_2\ast\sigma_3) \circ s) \big(\sigma_1(\zero{a}\ot  \tuno{\one{a}}) \big) \, (\sigma_2\ast \sigma_3 ) (\tdue{\one{a}}\ot \tilde{a})\\
&=(\sigma_1\ast (\sigma_2\ast \sigma_3))(a\ot \tilde{a}) \, .
\end{align*}
The assumption that $B$ belongs to the centre of $A$ implies that the product
$\sigma_{1}\ast \sigma_{2}$ is an algebra map. Indeed, for $a\ot\tilde{a}$ and $a'\ot\tilde{a}'\in \C$ one has,
\begin{align*}
&\sigma_{1}\ast\sigma_{2}(aa'\ot\tilde{a}'\tilde{a})\\
&=(\sigma_{2} \circ t)(\sigma_{1}(\zero{(aa)'}\ot  \tuno{\one{(aa')}})) \,
(\sigma_{2}(\tdue{\one{(aa')}}\ot \tilde{a}'\tilde{a}))\\
&=(\sigma_{2} \circ t)(\sigma_{1}(\zero{a}\zero{a'}\ot  \tuno{\one{a'}}\tuno{\one{a}})) \, (\sigma_{2}(\tdue{\one{a}}\tdue{\one{a'}}\ot \tilde{a}'\tilde{a}))\\
&=(\sigma_{2} \circ t)(\sigma_{1}(\zero{a}\ot  \tuno{\one{a}}))\, (\sigma_{2} \circ t)(\sigma_{1}(\zero{a'}\ot  \tuno{\one{a'}}))\, (\sigma_{2}(\tdue{\one{a'}}\ot \tilde{a}'))(\sigma_{2}(\tdue{\one{a}}\ot \tilde{a}))\\
&=(\sigma_{1}\ast \sigma_{2})(a\ot  \tilde{a}) \, (\sigma_{1}\ast \sigma_{2})(a'\ot  \tilde{a}')
 \end{align*}
The 2nd step uses \eqref{p2}, the 3rd step uses the fact that $\sigma_{1}$ and $\sigma_{2}$ are both algebra maps, the last step uses that $B$ belongs to the centre.

Thus $\sigma_{1}\ast \sigma_{2}$ is a well defined algebra map.
Next, we check $\epsilon$ is the unit of this multiplication. Firstly, since $B$ is taken to be contained in the centre of $A$
the counit $\epsilon$ is an algebra map. Indeed, for any $a\ot \tilde{a}\in \C(A, H)$,
\begin{align*}
\epsilon(aa'\ot\tilde{a}'\tilde{a})=aa'\tilde{a}'\tilde{a}=a'\tilde{a}'a\tilde{a}=\epsilon(a\ot\tilde{a})\epsilon(a'\ot\tilde{a}'),
\end{align*}
the 2nd step using that $B$ belongs to the centre.
Then,
\begin{align*}
\sigma\ast \epsilon \, (a\ot \tilde{a})&=(\epsilon \circ t) \big(\sigma(\zero{a}\ot  \tuno{\one{a}}) \big)\,
\epsilon(\tdue{\one{a}}\ot \tilde{a}) =(\epsilon \circ t) \big( \sigma(\zero{a}\ot  \tuno{\one{a}}) \big) \,
\tdue{\one{a}}\tilde{a}\\
&=(\epsilon \circ t) \big(\sigma(a\ot \tilde{a}) \big) \\ &=\sigma(a\ot \tilde{a}),
 \end{align*}
where the 3rd step uses the definition of $\C$. Similarly, for any $a\ot \tilde{a}\in \C(A, H)$:
\begin{align*}
\epsilon\ast \sigma(a\ot \tilde{a})&=(\sigma \circ t) \big(\epsilon(\zero{a}\ot  \tuno{\one{a}}) \big) \sigma(\tdue{\one{a}}\ot \tilde{a})\\
&=(\sigma \circ t) \big( \zero{a}\tuno{\one{a}} \big) \, \sigma(\tdue{\one{a}}\ot \tilde{a}) \\
& =\sigma(a\ot \tilde{a}),
 \end{align*}
and for the last equality we use $\zero{a}\tuno{\one{a}}\ot _{B} \tdue{\one{a}}=1\ot_{B} a$.
Thus $\epsilon$ is the unit.

Next, let us check that the inverse  of a bisection $\sigma$ as given in \eqref{invobis1}, is well defined.
The quantity $a\sigma(\zero{\tilde{a}}\ot \tuno{\one{\tilde{a}}})\tdue{\one{\tilde{a}}}$,
the argument of $(\sigma\circ s)^{-1}$ in \eqref{invobis1}, belongs to $B$. Indeed,
with $\delta^A$ the coaction as in \ref{def:hg}, one has
\begin{align*}
\delta^A(a\sigma(\zero{\tilde{a}}\ot \tuno{\one{\tilde{a}}})\tdue{\one{\tilde{a}}})&=\zero{a}\sigma(\zero{\tilde{a}}
\ot \tuno{\one{\tilde{a}}}) \zero{({\tdue{\one{\tilde{a}}}})} \ot \one{a}\one{({\tdue{\one{\tilde{a}}}})}\\
&=\zero{a}\sigma(\zero{\tilde{a}}\ot{\tuno{\one{\tilde{a}}}})\tdue{\one{\tilde{a}}}\ot\one{a}\two{\tilde{a}}\\
&=\zero{a}\sigma(\zero{\zero{\tilde{a}}}\ot{\tuno{\one{\zero{\tilde{a}}}}})\tdue{\one{\zero{\tilde{a}}}}\ot\one{a}\one{\tilde{a}}\\
&=a\sigma(\zero{\tilde{a}}\ot \tuno{\one{\tilde{a}}})\tdue{\one{\tilde{a}}}\ot 1_{H},
\end{align*}
where the 1st step uses that $\sigma$ is valued in $B$, the 2nd use \eqref{p4}, the last one uses \eqref{ec2}.

And for any $b\in B$, $\sigma^{-1}(s(b))=(\sigma\circ s)^{-1}(b)$, so $\sigma^{-1}\circ s = (\sigma\circ s)^{-1} \in \Aut(B)$;  also
$\sigma^{-1}(t(b))=(\sigma\circ s)^{-1}(\sigma(b\ot 1))=(\sigma\circ s)^{-1}((\sigma \circ s)(b)))=b$, so
$\sigma^{-1}\circ t=\id_{B}$. \\
Next, let us show $\sigma^{-1}$ is indeed the inverse of $\sigma$. For $a\ot\tilde{a}\in \C$, we have
\begin{align*}
(\sigma^{-1} & \ast\sigma)(a\ot \tilde{a}) \\
&=(\sigma \circ s)(\sigma^{-1}(\zero{a}\ot \tuno{\one{a}})) \, \sigma(\tdue{\one{a}}\ot\tilde{a})\\
&=(\sigma \circ s) \Big((\sigma\circ s)^{-1} \big(\zero{a}\sigma(\zero{\tuno{\one{a}}}\ot\tuno{\one{\tuno{\one{a}}}})\tdue{\one{\tuno{\one{a}}}} \big) \Big) \, \sigma(\tdue{\one{a}}\ot\tilde{a})\\
&=\zero{a}\sigma(\zero{\tuno{\one{a}}}\ot\tuno{\one{\tuno{\one{a}}}})\tdue{\one{\tuno{\one{a}}}} \, \sigma(\tdue{\one{a}}\ot\tilde{a})\\
&=\zero{a}\sigma\big(\tuno{\two{a}}\ot\tuno{S(\one{a})})\tdue{S(\one{a})} \, \sigma(\tdue{\two{a}}\ot\tilde{a})\\
&=\zero{a}\sigma(\tuno{\two{a}}\tdue{\two{a}}\ot\tilde{a}\tuno{S(\one{a})}\big) \, \tdue{S(\one{a})}\\
&=\zero{a}\tilde{a} \, \tuno{S(\one{a})}\tdue{S(\one{a})}\\
&=a\tilde{a}\\
&=\epsilon(a\ot\tilde{a}),
\end{align*}
where the 4th step uses \eqref{p1}, the 5th step uses that $B$ belongs to the centre of $A$, the 6th and 7th steps use \eqref{p5}. On the other hand,
\begin{align*}
(\sigma\ast\sigma^{-1})(a\ot\tilde{a})
&=(\sigma^{-1} \circ s) \big(\sigma(\zero{a}\ot\tuno{\one{a}}) \big) \, \sigma^{-1}(\tdue{\one{a}}\ot\tilde{a})\\
&=(\sigma\circ s)^{-1} \big(\sigma(\zero{a}\ot\tuno{\one{a}})\big) \, (\sigma\circ s)^{-1}(\tdue{\one{a}}\sigma(\zero{\tilde{a}}\ot\tuno{\one{\tilde{a}}})\tdue{\one{\tilde{a}}})\\
&=(\sigma\circ s)^{-1} \big(\sigma(\zero{a}\zero{\tilde{a}}\ot\tuno{\one{\tilde{a}}}\tuno{\one{a}}\big) \, \tdue{\one{a}}\tdue{\one{\tilde{a}}})\\
&=(\sigma\circ s)^{-1} \big(\sigma(\zero{(a\tilde{a})}\ot\tuno{\one{(a\tilde{a})}}\big) \, \tdue{\one{(a\tilde{a})}})\\
&=(\sigma\circ s)^{-1}(\sigma(a\tilde{a}\ot 1))\\
&=(\sigma\circ s)^{-1} \big( (\sigma\circ s)(a \tilde{a}) \big)\\
&=a\tilde{a}\\
&=\epsilon(a\ot\tilde{a}),
\end{align*}
where the second step uses $\sigma^{-1}(s(b))=(\sigma\circ s)^{-1}(b)$, the 3rd step uses that $B$ belongs to the centre of $A$, the 4th step uses \eqref{p2}, and the 5th step uses that $a\tilde{a}\in B$.

Finally, the map $\sigma^{-1}$ is an algebra map:
\begin{align*}
\sigma^{-1}(aa'\ot\tilde{a}'\tilde{a})&=(\sigma\circ s)^{-1}\Big(aa'\sigma\big(\zero{(\tilde{a}'\tilde{a})\big)}\ot\tuno{(\one{\tilde{a}'\tilde{a})}}\Big)\tdue{(\one{\tilde{a}'\tilde{a})}}\\
&=(\sigma\circ s)^{-1}\big(aa'\sigma(\zero{\tilde{a}'}\zero{\tilde{a}}\ot \tuno{\one{\tilde{a}}}\tuno{\one{\tilde{a}}'})\tdue{\one{\tilde{a}}'}\tdue{\one{\tilde{a}}} \big)\\
&=(\sigma\circ s)^{-1}\big(aa'\sigma(\zero{\tilde{a}}\ot\tuno{\one{\tilde{a}}})\sigma(\zero{\tilde{a}}'\ot\tuno{\one{\tilde{a}}'})\tdue{\one{\tilde{a}}'}\tdue{\one{\tilde{a}}}\big)\\
&=(\sigma\circ s)^{-1}\big(aa'\sigma(\zero{\tilde{a}'}\ot\tuno{\one{\tilde{a}}'})\tdue{\one{\tilde{a}}'}\sigma(\zero{\tilde{a}}\ot\tuno{\one{\tilde{a}}})\tdue{\one{\tilde{a}}}\big)\\
&=(\sigma\circ s)^{-1}\big(a'\sigma(\zero{\tilde{a}'}\ot\tuno{\one{\tilde{a}}'})\tdue{\one{\tilde{a}}'}a\sigma(\zero{\tilde{a}}\ot\tuno{\one{\tilde{a}}})\tdue{\one{\tilde{a}}}\big)\\
&=(\sigma\circ s)^{-1}\big(a'\sigma(\zero{\tilde{a}'}\ot\tuno{\one{\tilde{a}}'})\tdue{\one{\tilde{a}}'}\big)
(\sigma\circ s)^{-1}\big(a\sigma(\zero{\tilde{a}}\ot\tuno{\one{\tilde{a}}})\tdue{\one{\tilde{a}}}\big)\\
&=\sigma^{-1}(a\ot \tilde{a}) \, \sigma^{-1}(a'\ot \tilde{a}');
\end{align*}
the second step uses \eqref{p2}, the 3rd step uses $\sigma$ is an algebra map, the 5th one uses that the image of $\sigma$ and $a'\sigma(\zero{\tilde{a}'}\ot \tuno{\one{\tilde{a}'}})\tdue{\one{\tilde{a}'}}$ are in $B$, which is in the centre of $A$.
\end{proof}

\begin{rem}\label{remlin}
Having asked that bisections are algebra maps, they are $B$-linear in the sense of the coring bimodule structure in \eqref{eq:rbgd.bimod}. That is, for any bisection $\sigma$ and $b\in B$,
$$
\sigma\big((a\ot\tilde{a}) \triangleleft b\big) = \sigma\big(t(b) \bullet_{\C}  a\ot\tilde{a}   \big) = \sigma(a\ot\tilde{a}) \, \sigma(t(b))
= \sigma(a\ot\tilde{a}) \, b \,
$$
and
$$
\sigma\big( b \triangleright (a\ot\tilde{a})\big) = \sigma\big(s(b) \bullet_{\C} a\ot\tilde{a}   \big)
= \sigma(a\ot\tilde{a}) \, \sigma(s(b)) = \sigma(a\ot\tilde{a}) \, (\sigma \circ s)(b) \, .
$$
\end{rem}

Among all bisections an important role is played by the vertical ones.
\begin{defi}\label{def:vbisection}
Let $\C(A, H)$ be the left Ehresmann--Schauenburg bialgebroid associated to a Hopf--Galois extension $B=A^{co \, H}\subseteq A$. A \textit{vertical bisection} is a bisection of $\C$ which is also a left inverse for the target map $s$, that is
$ \sigma\circ s = \id_{B}$.
\end{defi}
Then the following statement is immediate.
\begin{cor}\label{prop:gsovb}
Consider the left Ehresmann--Schauenburg bialgebroid $\C(A, H)$ associated to a Hopf--Galois extension
$B=A^{co \, H}\subseteq A$. If $B$ is contained in the centre of $A$, then the set $\B_{ver}(\C(A, H))$ of all vertical
bisections of $\C$ is a group,
a subgroup of the group of all bisections $\B(\C(A, H))$, with the restricted product given by
\beq\label{mulbis}
\sigma_{1}\ast \sigma_{2}(a\ot  \tilde{a}) : = \sigma_{1}(\zero{a}\ot  \tuno{\one{a}})
\, \sigma_{2}(\tdue{\one{a}}\ot \tilde{a})
\eeq
for any vertical bisections $\sigma_1$, $\sigma_2$. Moreover, the inverse of a vertical bisection is given by
\beq\label{invobis}
\sigma^{-1}(a\ot \tilde{a}) = a\sigma(\zero{\tilde{a}}\ot \tuno{\one{\tilde{a}}})\, \tdue{\one{\tilde{a}}} =
\sigma(\zero{\tilde{a}}\ot \tuno{\one{\tilde{a}}})\, a\, \tdue{\one{\tilde{a}}} \, .
\eeq
\end{cor}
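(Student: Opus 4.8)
The plan is to derive Corollary \ref{prop:gsovb} as a specialisation of Proposition \ref{prop:gsob}, since a vertical bisection is precisely a bisection with the extra condition $\sigma \circ s = \id_B$. First I would establish that $\B_{ver}(\C(A,H))$ is closed under the product \eqref{mulbis1}. For vertical bisections $\sigma_1, \sigma_2$ we have $\sigma_1 \circ s = \sigma_2 \circ s = \id_B$, so the outer application of $\sigma_2 \circ s$ in the first line of \eqref{mulbis1} becomes the identity; this immediately collapses the general product formula \eqref{mulbis1} to the simpler expression \eqref{mulbis}. To confirm closure I would check that $(\sigma_1 \ast \sigma_2) \circ s = \id_B$: evaluating on $s(b) = b \ot 1$ and using the computation already carried out in the proof of Proposition \ref{prop:gsob}, namely $\sigma_1 \ast \sigma_2 (s(b)) = (\sigma_2 \circ s) \circ (\sigma_1 \circ s)(b)$, the right-hand side is $\id_B \circ \id_B (b) = b$. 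Thus $\sigma_1 \ast \sigma_2$ is again vertical.

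Next I would verify that the unit and inverses stay within the vertical subgroup. The unit is the counit $\epsilon$, and since $\epsilon \circ s (b) = \epsilon(b \ot 1) = b$ by \eqref{counit}, we have $\epsilon \circ s = \id_B$, so $\epsilon \in \B_{ver}(\C(A,H))$. For the inverse, I would specialise \eqref{invobis1}: when $\sigma \circ s = \id_B$, the operator $(\sigma \circ s)^{-1}$ appearing in \eqref{invobis1} is also the identity, so the inverse formula reduces directly to the first expression in \eqref{invobis}, namely $\sigma^{-1}(a \ot \tilde{a}) = a\, \sigma(\zero{\tilde{a}} \ot \tuno{\one{\tilde{a}}})\, \tdue{\one{\tilde{a}}}$. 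One then checks $\sigma^{-1} \circ s = (\sigma \circ s)^{-1} = \id_B$ from the identity $\sigma^{-1} \circ s = (\sigma \circ s)^{-1}$ proved in Proposition \ref{prop:gsob}, confirming $\sigma^{-1}$ is vertical.

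Finally I would justify the second equality in \eqref{invobis}, that $a\, \sigma(\zero{\tilde{a}} \ot \tuno{\one{\tilde{a}}})\, \tdue{\one{\tilde{a}}} = \sigma(\zero{\tilde{a}} \ot \tuno{\one{\tilde{a}}})\, a\, \tdue{\one{\tilde{a}}}$. This is precisely where the hypothesis that $B$ lies in the centre of $A$ is used once more: the quantity $\sigma(\zero{\tilde{a}} \ot \tuno{\one{\tilde{a}}})$ is valued in $B$ (as $\sigma$ maps into $B$), and elements of $B$ commute with the arbitrary factor $a \in A$, so $a$ may be moved past it freely. The main point of the argument is organisational rather than computational: essentially every nontrivial verification (that $\ast$ is well defined and associative, that $\epsilon$ is the unit, that $\sigma^{-1}$ is an algebra map and a genuine two-sided inverse) is inherited verbatim from Proposition \ref{prop:gsob}, so the only new content is the two collapses of formulas under $\sigma \circ s = \id_B$ together with the closure check. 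I expect no serious obstacle here; the delicate part was already handled in the proof of the general group structure, and the faithful flatness and centrality hypotheses needed for well-definedness carry over unchanged.
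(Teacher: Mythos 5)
Your proposal is correct and follows the same route as the paper: the paper's own proof is a one-line observation that the right-hand sides of \eqref{mulbis} and \eqref{invobis} define vertical bisections, i.e.\ it also treats the corollary as an immediate specialisation of Proposition \ref{prop:gsob} under $\sigma\circ s=\id_B$. Your write-up merely spells out the collapses of \eqref{mulbis1} and \eqref{invobis1} and the centrality argument for the second equality in \eqref{invobis}, all of which are accurate.
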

\begin{proof}
The right hand side of both \eqref{mulbis} and  \eqref{invobis} is seen to be a vertical bisection.
\end{proof}
\begin{rem}
We notice that the product \eqref{mulbis} on vertical bisections is just the convolution product due to the second expression for the coproduct in \eqref{copro},
\begin{align}\label{mulver}
\sigma_{1}\ast \sigma_{2}(a\ot  \tilde{a}) & = (\sigma_{1} \ot_B\sigma_{2}) \circ \Delta(a\ot  \tilde{a}) \nn \\
& = \sigma_{1}(\zero{a}\ot  \tuno{\one{a}}) \, \sigma_{2}(\tdue{\one{a}}\ot \tilde{a}).
\end{align}
We show directly that the product in \eqref{mulver} is well defined, Indeed, for $b\in B$ we have
\begin{align*}
\sigma_{1}(\zero{a}\ot  \tuno{\one{a}} \, b)  & \, \sigma_{2}(\tdue{\one{a}}\ot \tilde{a})
=\sigma_{1}\big( (\zero{a}\ot  \tuno{\one{a}}) \bullet_{\C} (1 \ot b) \big) \, \sigma_{2}(\tdue{\one{a}}\ot \tilde{a}) \\
&=\sigma_{1}(\zero{a}\ot  \tuno{\one{a}}) \, \sigma_{1}(1 \ot b) \, \sigma_{2}(\tdue{\one{a}}\ot \tilde{a}) \\
&=\sigma_{1}(\zero{a}\ot  \tuno{\one{a}}) \, b \, \sigma_{2}(\tdue{\one{a}}\ot \tilde{a}) \\
&=\sigma_{1}(\zero{a}\ot  \tuno{\one{a}}) \, \sigma_{2}(b \ot 1) \, \sigma_{2}(\tdue{\one{a}}\ot \tilde{a}) \\
&=\sigma_{1}(\zero{a}\ot  \tuno{\one{a}}) \, \sigma_{2}(b\, \tdue{\one{a}}\ot \tilde{a}) \,
\end{align*}
with the 4th step coming from $\sigma_2$ being vertical.
\end{rem}
\subsection{Bisections and gauge groups}
Recall the Definition~\ref{def:vgaugegroup} and the Proposition~\ref{prop:gsogg} concerning the gauge 
group of a Hopf--Galois extension. We have the following results.
\begin{prop}\label{atob}
Let $B=A^{coH}\subseteq A$ be a Hopf--Galois extension,
and let $\C(A, H)$ be the corresponding left Ehresmann--Schauenburg bialgebroid. If $B$ is in the centre of $A$, then there is a  group isomorphism $\alpha:\Aut_H(A)\to \B(\C(A, H))$.
The isomorphism $\alpha$ restricts to an isomorphism between vertical subgroups
$\alpha:\Aut_{ver}(A)\to \B_{ver}(\C(A, H))$.
\end{prop}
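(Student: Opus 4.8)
The plan is to exhibit the isomorphism $\alpha$ together with an explicit inverse, and then check the group homomorphism property and the vertical restriction. First I would define, for a gauge transformation $F \in \Aut_H(A)$, the map
\[
\alpha(F) : \C(A, H) \to A, \qquad \alpha(F)(a\ot\tilde{a}) := F(a)\,\tilde{a} .
\]
The first task is to show $\alpha(F)$ is a bisection. To see it is valued in $B$, I would apply $\delta^A$ and use the $H$-equivariance of $F$ together with the coinvariance condition \eqref{ec2} defining $\C(A,H)=(A\ot A)^{coH}$, which gives $\delta^A(F(a)\tilde{a}) = F(a)\tilde{a}\ot 1_H$. Unitality is immediate, and the bisection axioms follow from $\alpha(F)(t(b))=F(1)b=b$ and $\alpha(F)(s(b))=F(b)=F|_{B}(b)$, so that $\alpha(F)\circ t=\id_{B}$ and $\alpha(F)\circ s=F|_{B}\in\Aut(B)$. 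The only point needing the centrality hypothesis is that $\alpha(F)$ respects the product \eqref{pro}: comparing $\alpha(F)\big((a\ot\tilde{a})\bullet_{\C}(a'\ot\tilde{a}')\big)=F(a)F(a')\tilde{a}'\tilde{a}$ with $\alpha(F)(a\ot\tilde{a})\,\alpha(F)(a'\ot\tilde{a}')=F(a)\tilde{a}\,F(a')\tilde{a}'$, the two agree precisely because $F(a')\tilde{a}'=\alpha(F)(a'\ot\tilde{a}')\in B$ lies in the centre of $A$ and hence commutes with $\tilde{a}$.

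Next I would verify that $\alpha$ is a group homomorphism, that is $\alpha(F\cdot G)=\alpha(F)\ast\alpha(G)$ for the products of Propositions \ref{prop:gsogg} and \ref{prop:gsob}. Evaluating the right-hand side with \eqref{mulbis1} gives $(\alpha(G)\circ s)\big(F(\zero{a})\tuno{\one{a}}\big)\,G(\tdue{\one{a}})\,\tilde{a}$; since $\alpha(G)\circ s=G|_{B}$ and $G$ is an algebra map, this collapses, via the identity $\tuno{\one{a}}\tdue{\one{a}}=\epsilon(\one{a})1_A$ of \eqref{p5} and the counit property, to $G(F(a))\,\tilde{a}=\alpha(G\circ F)(a\ot\tilde{a})$. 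Recalling that $F\cdot G=G\circ F$ in $\Aut_H(A)$, this is exactly $\alpha(F\cdot G)$.

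To produce the inverse I would set, for a bisection $\sigma$,
\[
\beta(\sigma): A \to A, \qquad \beta(\sigma)(a) := \sigma(\zero{a}\ot\tuno{\one{a}})\,\tdue{\one{a}},
\]
which makes sense since $\zero{a}\ot\tuno{\one{a}}\in\C(A,H)$ by the coproduct \eqref{copro}, and show $\beta(\sigma)\in\Aut_H(A)$. Here $H$-equivariance follows by applying $\delta^A$ and using \eqref{p4}; multiplicativity uses \eqref{p2}, the product \eqref{pro} of $\C$ together with the fact that $\sigma$ is an algebra map, and once more centrality of $B$ to move the central factor $\sigma(\zero{a'}\ot\tuno{\one{a'}})$ past $\tdue{\one{a}}$; unitality comes from $\tau(1_H)=1_A\ot_B 1_A$; and on $B$ one finds $\beta(\sigma)(b)=(\sigma\circ s)(b)$, so $\beta(\sigma)|_{B}=\sigma\circ s\in\Aut(B)$. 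Then $\beta$ and $\alpha$ are mutually inverse: $\beta(\alpha(F))(a)=F(\zero{a})\tuno{\one{a}}\tdue{\one{a}}=F(a)$ by \eqref{p5}, while $\alpha(\beta(\sigma))(a\ot\tilde{a})=\sigma(\zero{a}\ot\tuno{\one{a}})\tdue{\one{a}}\tilde{a}=\sigma(a\ot\tilde{a})$, the last equality being precisely the identity $\sigma\ast\epsilon=\sigma$ already established in the proof of Proposition \ref{prop:gsob}. This shows $\alpha$ is a bijective homomorphism, hence a group isomorphism. Finally, the vertical restriction is immediate from $\alpha(F)\circ s=F|_{B}$ and $\beta(\sigma)|_{B}=\sigma\circ s$: indeed $F|_{B}=\id_{B}$ if and only if $\alpha(F)\circ s=\id_{B}$, so $\alpha$ restricts to a bijection $\Aut_{ver}(A)\to\B_{ver}(\C(A,H))$.

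The main obstacle is the collection of well-definedness verifications — that $\alpha(F)$ takes values in $B$ and is multiplicative, and that $\beta(\sigma)$ is both $H$-colinear and multiplicative — each of which relies on the centrality of $B$ and on careful bookkeeping with the translation-map identities \eqref{p4}, \eqref{p2} and \eqref{p5}. These are, however, essentially the same computations already carried out in Propositions \ref{prop:gsogg} and \ref{prop:gsob}, so the verification becomes routine once the two maps $\alpha$ and $\beta$ have been written down.
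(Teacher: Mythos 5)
Your proposal is correct and follows essentially the same route as the paper: the maps $\alpha(F)$ and $\beta(\sigma)$ are exactly the paper's $\sigma_F$ in \eqref{gtob} and $F_\sigma$ in \eqref{btog}, and the verifications (image in $B$ via $H$-equivariance and \eqref{ec2}, multiplicativity via centrality of $B$, the homomorphism property via \eqref{p5}, mutual inversion via \eqref{p3} and \eqref{ec1}, and the vertical restriction from $\alpha(F)\circ s=F|_B$) match the paper's proof step for step.
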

\begin{proof}
Let $F\in\Aut_H(A)$ and define $\sigma_{F}\in \B(\C(A, H))$ by
\beq\label{gtob}
\sigma_{F}(a\ot \tilde{a}):=F(a)\tilde{a},
\eeq
for any $a\ot\tilde{a}\in \C(A, H)$. This is well defined since
\begin{align*}
\delta^A(F(a)\tilde{a}) &= \zero{(F(a)\tilde{a})}\ot \one{(F(a)\tilde{a})}
\\
& =\zero{F(a)}\zero{\tilde{a}}\ot \one{F(a)}\one{\tilde{a}}=F(\zero{a})\zero{\tilde{a}}\ot\one{a}\one{\tilde{a}}\\
&=F(a)\tilde{a}\ot 1_{H},
\end{align*}
where the last equality use  \eqref{ec2}, thus $F(a)\tilde{a}\in B$. And $\sigma_{F}$ is an algebra map, since
\begin{align*}
\sigma_{F}((a'\ot  \tilde{a}')\bullet_{\C(A, H)}({a}\ot  \tilde{a}))&=\sigma_{F}(a' a\ot \tilde{a}\tilde{a}')
= F(a' a)\tilde{a}\tilde{a}'\\
&=F(a')F(a)\tilde{a}\tilde{a}'=F(a')(F(a)\tilde{a})\tilde{a}' =F(a')\tilde{a}'\sigma_{F}({a}\ot  \tilde{a})
\\
& =\sigma_{F}(a'\ot  \tilde{a}')\sigma_{F}({a}\ot  \tilde{a}),
\end{align*}
where the 5th equality uses that $B$ is in the centre of $A$. It is clear that $\sigma_{F}\circ t=\id_{B}$ and
$\sigma_F\circ s=F|_{B}\in\Aut(B)$. Thus $\sigma_{F}$ is a well defined bisection. By the definition \eqref{gtob},
\begin{align*}
\sigma_{\id_{A}}(a\ot\tilde{a})=a\tilde{a}=\epsilon(a\ot\tilde{a})
\end{align*}
and for any $a\ot \tilde{a}\in \C(A, H)$ we have
\begin{align*}
\sigma_{G}\ast\sigma_{F}(a\ot \tilde{a})&= (\sigma_{F} \circ s) \big(\sigma_{G}(\zero{a}\ot \tuno{\one{a}}) \big) \,
\sigma_{F}(\tdue{\one{a}}\ot \tilde{a})\\
&= \sigma_{F} \big(G(\zero{a})\tuno{\one{a}}\ot 1)\big) \, \sigma_{F}(\tdue{\one{a}}\ot \tilde{a})\\
&=F(G(\zero{a})\tuno{\one{a}})F(\tdue{\one{a}})\tilde{a}\\
&=F\big(G(\zero{a})\tuno{\one{a}}\tdue{\one{a}} \big) \, \tilde{a}\\
&=F(G(a)) \, \tilde{a}\\
&=\sigma_{(G\cdot F)}(a\ot \tilde{a}),
\end{align*}
where the 5th step uses \eqref{p5}.

Conversely, given a bisection $\sigma$, one can define an algebra map  $F_{\sigma}: A\to A$ by
\beq\label{btog}
F_{\sigma}(a):=\sigma(\zero{a}\ot\tuno{\one{a}})\tdue{\one{a}} \, .
\eeq
We have already seen (cf. \eqref{test1}) that the right hand side of \eqref{btog} is well defined.
Clearly $F_{\sigma}(b)=(\sigma \circ s)(b)$ for any $b\in B$, so $F_{\sigma}|_{B}\in\Aut(B)$.
Moreover,
\begin{align*}
F_{\sigma}(aa')&=\sigma(\zero{(aa')}\ot \tuno{\one{(aa')}})\tdue{\one{(aa')}}=\sigma(\zero{a}\zero{a'}\ot \tuno{(\one{a}\one{a'})})\tdue{(\one{a}\one{a'})}\\
&=\sigma(\zero{a}\zero{a'}\ot \tuno{\one{a'}}\tuno{\one{a}})\tdue{\one{a}}\tdue{\one{a'}}\\
&=\sigma(\zero{a'}\ot\tuno{\one{a'}})\sigma(\zero{a}\ot\tuno{\one{a}})\tdue{\one{a}}\tdue{\one{a'}}\\
&=F_{\sigma}(a)F_{\sigma}(a'),
\end{align*}
where in the third step we use \eqref{p2}. Also, $F_{\sigma}$ is $H$-equivalent:
\begin{align*}
\zero{F_{\sigma}(a)}\ot\one{F_{\sigma}(a)}&=\sigma(\zero{a}\ot\tuno{\one{a}})\zero{\tdue{\one{a}}}\ot\one{\tdue{\one{a}}}\\
&=\sigma(\zero{a}\ot\tuno{\one{a}})\tdue{\one{a}}\ot\two{a}\\
&=F_{\sigma}(\zero{a})\ot\one{a},
\end{align*}
where the 2nd step uses \eqref{p4}, thus $F_{\sigma}\in\Aut_H(A)$.

The map $\alpha$ is an isomorphism. Indeed for any $a\ot \tilde{a}\in \C(A, H)$
and any $\sigma\in\B(\C(A, H))$:
\begin{align*}
\sigma_{F_{\sigma}}(a\ot \tilde{a})=F_{\sigma}(a)\tilde{a}=\sigma(\zero{a}\ot\tuno{\one{a}})\tdue{\one{a}}\tilde{a}=\sigma(a\ot \tilde{a}),
\end{align*}
where the last step uses \eqref{ec1}. On the other hand, for any $a\in A$ and any $F\in\Aut_H(A)$:
\begin{align*}
F_{\sigma_{F}}(a)=\sigma_{F}(\zero{a}\ot\tuno{\one{a}})\tdue{\one{a}}=F(\zero{a})\tuno{\one{a}}\tdue{\one{a}}=F(a).
\end{align*}
Finally, for a vertical automorphism $F \in \Aut_{ver}(A)$, it is clear
that the corresponding $\sigma_F\in\B_{ver}(\C(A, H))$,
and conversely  from $\sigma \in \B_{ver}(\C(A, H))$ we have $F_\sigma\in\Aut_{ver}(A)$.
\end{proof}

\subsection{Extended bisections and gauge groups}\label{extended}
We have already mentioned that gauge transformations for a noncommutative principal
bundles could be defined without asking them to be algebra homomorphisms \cite{brz-tr}.
Mainly for the sake of completeness we record here a version of them via bialgebroid and bisections.
To distinguish them from the analogous concepts introduced in the previous section, and for lack of a better name, we call the extended gauge transformation and extended bisections.

In the same vein of \cite{brz-tr} we have the following definition.
\begin{defi}\label{def:gegaugegroup}
Given a Hopf--Galois extension $B=A^{coH}\subseteq A$. Its \textit{extended gauge group} $\Aut^{ext}_H(A)$ consists of invertible $H$-comodule unital maps $F: A \to A$ such that their restrictions $F{|_B} \in \Aut(B)$ and such that
$F(ba)=F(b)F(a)$ for any $b\in B$ and $a\in A$.
The \textit{extended vertical gauge group} $\Aut^{ext}_{ver}(A)$ is made of elements $F \in \Aut^{ext}_H(A)$ whose restrictions $F{|_B} = \id_B$. The group structure is map composition.
\end{defi}

In parallel with this we have then the  following.
\begin{defi}\label{def:gebisection}
Let $\C(A, H)$ be the left Ehresmann--Schauenburg bialgebroid of the Hopf--Galois extension
$B=A^{co \, H}\subseteq A$. An \textit{extended bisection} is a unital algebra map $\sigma: \C(A, H)\to B$,
such that $\sigma\circ t=\id_{B}$ and $\sigma\circ s\in\Aut(B)$, which in addition is $B$-linear in the sense of the $B$-coring structure on $\C$
(cf. Remark \ref{remlin}).
That is, $\sigma\big((a\ot\tilde{a})\triangleleft b\big)=\sigma(a\ot\tilde{a})\, b$, and
$\sigma\big(b\triangleright(a\ot\tilde{a})\big)=\sigma(a\ot\tilde{a}) \, (\sigma \circ s)(b)$.
The set of all extended bisections which are invertible for the product \eqref{mulbis1}:
\beq\label{mulbis1ex}
(\sigma_{1}\ast \sigma_{2}) (a\ot  \tilde{a}) :=(\sigma_{2} \circ s) \big(\sigma_{1}(\zero{a}\ot  \tuno{\one{a}})\big)
\, \sigma_{2}(\tdue{\one{a}}\ot \tilde{a}).
\eeq
will be denote by $\B^{ext}(\C(A, H))$, while
$\B^{ext}_{ver}(\C(A, H))$ will denote those which are invertible and vertical, that is such that
$\sigma\circ s = \id_B$ as well.
\end{defi}

\begin{lem}
The product \eqref{mulbis1ex} is well defined.
\end{lem}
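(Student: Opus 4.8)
The plan is to show that the right-hand side of \eqref{mulbis1ex} does not depend on the choice of representative of the balanced tensor product $\ot_B$ that is hidden in the coproduct \eqref{copro}. Recall that $\Delta(a\ot\tilde a)=\zero a\ot\tuno{\one a}\ot_B\tdue{\one a}\ot\tilde a$, so that both legs $\zero a\ot\tuno{\one a}$ and $\tdue{\one a}\ot\tilde a$ already lie in $\C(A, H)$ (they are the two factors of the coproduct) and can legitimately be fed to $\sigma_1$ and $\sigma_2$. Thus the only point to check is that transporting an element $b\in B$ from the first leg to the second, as prescribed by the $B$-bimodule structure \eqref{eq:rbgd.bimod} of the coring, leaves the value unchanged. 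Since $(\zero a\ot\tuno{\one a})\triangleleft b=\zero a\ot\tuno{\one a}b$ and $b\triangleright(\tdue{\one a}\ot\tilde a)=b\tdue{\one a}\ot\tilde a$ by \eqref{pro}, the identity I must establish is
\[
(\sigma_2\circ s)\big(\sigma_1(\zero a\ot\tuno{\one a}b)\big)\,\sigma_2(\tdue{\one a}\ot\tilde a)
=(\sigma_2\circ s)\big(\sigma_1(\zero a\ot\tuno{\one a})\big)\,\sigma_2(b\tdue{\one a}\ot\tilde a)
\]
for all $b\in B$.

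First I would rewrite \eqref{mulbis1ex} in the combined form $\sigma_2\big(\sigma_1(\zero a\ot\tuno{\one a})\,\tdue{\one a}\ot\tilde a\big)$. This is legitimate because $\sigma_2$ is an algebra map and $s(x)\bullet_{\C}(\tdue{\one a}\ot\tilde a)=x\tdue{\one a}\ot\tilde a$ by \eqref{pro}, so that $(\sigma_2\circ s)(x)\,\sigma_2(\tdue{\one a}\ot\tilde a)=\sigma_2\big(s(x)\bullet_{\C}(\tdue{\one a}\ot\tilde a)\big)=\sigma_2(x\tdue{\one a}\ot\tilde a)$ for $x=\sigma_1(\zero a\ot\tuno{\one a})\in B$. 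In this form the $\ot_B$-independence reduces to the single identity $\sigma_1(\zero a\ot\tuno{\one a}b)\,\tdue{\one a}=\sigma_1(\zero a\ot\tuno{\one a})\,b\,\tdue{\one a}$, which is exactly the right $B$-linearity \eqref{test1} of $\sigma_1$ (equivalently $\sigma_1\big((\zero a\ot\tuno{\one a})\triangleleft b\big)=\sigma_1(\zero a\ot\tuno{\one a})\,b$, imposed in Definition~\ref{def:gebisection}). Indeed the two representatives $\tuno{\one a}b\ot_B\tdue{\one a}$ and $\tuno{\one a}\ot_B b\tdue{\one a}$ of $\tau(\one a)$ both then produce the element $\sigma_1(\zero a\ot\tuno{\one a})\,b\,\tdue{\one a}\ot\tilde a$, to which $\sigma_2$ assigns a single value. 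I would also note that this combined argument is coinvariant, hence lies in $\C(A, H)$, by the same $H$-coaction computation used for $F^{-1}$ in the proof of Proposition~\ref{prop:gsogg}.

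The step I expect to carry the real content is the reconciliation of the \emph{orders} of the $B$-factors coming from the two one-sided module structures: evaluating across the balanced tensor product one naturally meets both $(\sigma_2\circ s)(b)\,\sigma_2(\tdue{\one a}\ot\tilde a)$ and $\sigma_2(\tdue{\one a}\ot\tilde a)\,(\sigma_2\circ s)(b)$, and, $B$ being possibly noncommutative, these agree only because the image of an extended bisection is central in $B$. This centrality is not an extra hypothesis but is forced by Definition~\ref{def:gebisection}: for a unital algebra map $\sigma$ with $\sigma\circ t=\id_B$ one computes $\sigma\big((a\ot\tilde a)\triangleleft b\big)=\sigma\big(t(b)\bullet_{\C}(a\ot\tilde a)\big)=b\,\sigma(a\ot\tilde a)$, whereas the imposed $B$-linearity reads $\sigma\big((a\ot\tilde a)\triangleleft b\big)=\sigma(a\ot\tilde a)\,b$; comparing the two gives $\sigma(\C(A, H))\subseteq Z(B)$. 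It is precisely this consistency (the same fact that underlies Remark~\ref{remlin}) that guarantees the right $B$-linearity used above is compatible with $\sigma_1$ being an algebra map and that the two evaluation orders coincide, thereby making the product \eqref{mulbis1ex} well defined.
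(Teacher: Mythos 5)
Your proof addresses a real issue (independence of the right-hand side of \eqref{mulbis1ex} from the choice of representative of the balanced tensor product hidden in the coproduct \eqref{copro}), but it rests on a hypothesis that is not available in this section and it misses what the lemma is actually asserting. The unavailable hypothesis: you repeatedly use that $\sigma_2$ (and, in your last paragraph, a generic $\sigma$) is multiplicative --- e.g.\ to rewrite $(\sigma_{2}\circ s)(x)\,\sigma_{2}(\tdue{\one{a}}\ot\tilde{a})$ as $\sigma_{2}\big(s(x)\bullet_{\C}(\tdue{\one{a}}\ot\tilde{a})\big)$, and to ``derive'' that $\sigma(\C(A,H))\subseteq Z(B)$. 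But the whole point of Section \ref{extended} is that extended bisections are \emph{not} required to be algebra maps (see the Remark immediately following this lemma and Definition \ref{def:gegaugegroup}; if they were, the $B$-linearity conditions would be automatic as in Remark \ref{remlin} and nothing would be ``extended''). The steps you need survive, but only if justified by the $B$-linearity conditions \emph{imposed} in Definition \ref{def:gebisection}, namely $\sigma\big((a\ot\tilde{a})\triangleleft b\big)=\sigma(a\ot\tilde{a})\,b$ and $\sigma\big(b\triangleright(a\ot\tilde{a})\big)=\sigma(a\ot\tilde{a})\,(\sigma\circ s)(b)$, together with the commutativity of $B$ (which is forced by $B\subseteq Z(A)$); your appeal to \eqref{test1} must likewise be rerouted through the imposed right $B$-linearity rather than through $\sigma_1$ being an algebra map, and your ``centrality of the image'' observation is then vacuous, since $B$ is already commutative.

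More importantly, the paper's proof establishes something you declare unnecessary: that $\sigma_{1}\ast\sigma_{2}$ is itself $B$-linear in the coring sense, i.e.\ $(\sigma_{1}\ast\sigma_{2})\big((a\ot\tilde{a})\triangleleft b\big)=(\sigma_{1}\ast\sigma_{2})(a\ot\tilde{a})\,b$ and $(\sigma_{1}\ast\sigma_{2})\big(b\triangleright(a\ot\tilde{a})\big)=(\sigma_{1}\ast\sigma_{2})(a\ot\tilde{a})\,\big((\sigma_{1}\ast\sigma_{2})\circ s\big)(b)$, so that the product actually lands back in $\B^{ext}(\C(A,H))$. Since extended bisections are not algebra maps, this closure is no longer automatic and is precisely the content of ``well defined'' here; the second identity needs $\Delta(ba\ot\tilde{a})=b\zero{a}\ot\tau(\one{a})\ot\tilde{a}$, the left $B$-linearity of $\sigma_1$, the identity $\big((\sigma_{1}\ast\sigma_{2})\circ s\big)(b)=(\sigma_{2}\circ s)(\sigma_{1}\circ s)(b)$, and once more the centrality of $B$. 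Your argument, which treats only the balancing over $\ot_B$, leaves this entirely unaddressed, so as it stands it does not prove the lemma.
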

\begin{proof}
We need to check that $\sigma_{1}\ast \sigma_{2}$ is $B$-linear in the sense of the definition.
Now, for any $a\ot a'\in \C$ and $b\in B$ we have
\begin{align*}
    (\sigma_{1}\ast\sigma_{2})((a\ot \tilde{a})\triangleleft b) &= (\sigma_{1}\ast\sigma_{2})((a\ot \tilde{a} b) \\
    &=(\sigma_{2} \circ s)\big(\sigma_{1}(\zero{a}\ot  \tuno{\one{a}})\big) \, (\sigma_{2}(\tdue{\one{a}}\ot \tilde{a}b))\\
    &=(\sigma_{2} \circ s)\big(\sigma_{1}(\zero{a}\ot  \tuno{\one{a}})\big) \, \sigma_{2} \big( (\tdue{\one{a}} \ot \tilde{a}) \triangleleft b) \big) \\
   &= (\sigma_{1}\ast\sigma_{2})(a\ot a') \, b.
\end{align*}
Similarly,
\begin{align*}
    (\sigma_{1}\ast\sigma_{2})(b\triangleright(a\ot \tilde{a}))&=(\sigma_{1}\ast\sigma_{2})( b a \ot \tilde{a} ) \\
    &=
    (\sigma_{2} \circ s) \big(\sigma_{1}( b \zero{a} \ot  \tuno{\one{a}}) \big) \, (\sigma_{2}(\tdue{\one{a}}\ot \tilde{a})) \\
    &=(\sigma_{2} \circ s) \big( \sigma_{1}(b\triangleright(\zero{a}\ot
    \tuno{\one{a}})) \big) \, (\sigma_{2}(\tdue{\one{a}}\ot \tilde{a}))\\
    &=(\sigma_{2} \circ s) \big( \sigma_{1}(\zero{a}\ot
    \tuno{\one{a}}) \, (\sigma_{1} \circ s) (b) \big)\, (\sigma_{2}(\tdue{\one{a}}\ot \tilde{a}))\\
    &=(\sigma_{2} \circ s) (\sigma_{1} \circ s) (b) \, (\sigma_{2} \circ s) \big(\sigma_{1}(\zero{a}\ot \tuno{\one{a}}) \big) \,
    \sigma_{2}(\tdue{\one{a}}\ot \tilde{a}) \\
    &= (\sigma_{1}\ast\sigma_{2})(a\ot \tilde{a}) \, \big( (\sigma_{1}\ast\sigma_{2}) \circ s\big) (b).
\end{align*}
Were the last step uses the identity $\big( (\sigma_{1}\ast\sigma_{2}) \circ s\big) (b) =
(\sigma_{2} \circ s) (\sigma_{1} \circ s) (b)$, and the last but one one the fact that  $\sigma \circ s \in \Aut(B)$ and that $B$ is in the centre.
\end{proof}

\begin{rem}
We remark that \eqref{invobis1} is now not the inverse for the product in \eqref{mulbis1ex} since,
in contrast to Proposition \ref{prop:gsob} we are not asking the bisections be algebra maps.
\end{rem}

Finally, in analogy with Proposition \ref{atob} we have the following.
\begin{prop}\label{lem:gatb}
Let $B=A^{coH}\subseteq A$ be a Hopf--Galois extension, and let $\C(A, H)$ be the corresponding left Ehresmann--Schauenburg bialgebroid. If $B$ belongs to the centre of $A$, there is a group isomomorphism
$\widehat{\alpha}: \Aut^{ext}_H(A)\to \B^{ext}(\C(A, H))$.
The isomorphism restricts to an isomorphism $\widehat{\alpha}_{v}: \Aut^{ext}_{ver}(A)\to
\B^{ext}_{ver}(\C(A, H))$ between vertical subgroups.
\end{prop}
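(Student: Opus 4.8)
The plan is to imitate the proof of Proposition~\ref{atob}, using the same formula \eqref{gtob} for $\widehat{\alpha}$ and \eqref{btog} for its inverse, while tracking precisely which of the algebra-map steps there survive under the weaker hypotheses of Definitions~\ref{def:gegaugegroup} and \ref{def:gebisection}. For $F\in\Aut^{ext}_H(A)$ I set $\widehat{\alpha}(F)=\sigma_{F}$ with $\sigma_{F}(a\ot\tilde{a}):=F(a)\tilde{a}$. That $\sigma_{F}$ is valued in $B$, is unital, and satisfies $\sigma_{F}\circ t=\id_{B}$ and $\sigma_{F}\circ s=F|_{B}\in\Aut(B)$ follows verbatim from Proposition~\ref{atob}, since those computations use only the $H$-equivariance and unitality of $F$ together with \eqref{ec2}, and not that $F$ is an algebra map. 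The one genuinely new point is that $\sigma_{F}$ is no longer an algebra map but only $B$-linear in the coring sense of Remark~\ref{remlin}: from \eqref{pro} and \eqref{sourcetarget} one has $b\triangleright(a\ot\tilde{a})=ba\ot\tilde{a}$ and $(a\ot\tilde{a})\triangleleft b=a\ot\tilde{a}b$, so right $B$-linearity is immediate, while left $B$-linearity $\sigma_{F}(ba\ot\tilde{a})=F(ba)\tilde{a}=F(b)F(a)\tilde{a}$ is exactly the defining condition $F(ba)=F(b)F(a)$ of an extended gauge transformation, combined with the centrality of $B$ to move $F(b)=(\sigma_{F}\circ s)(b)$ past $F(a)\tilde{a}$.

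Next I would show $\widehat{\alpha}$ is a homomorphism for the product $\ast$, which simultaneously settles $\ast$-invertibility of $\sigma_{F}$. As in Proposition~\ref{atob} one has $\sigma_{\id_{A}}=\epsilon$, and I claim $\sigma_{G}\ast\sigma_{F}=\sigma_{G\cdot F}$ with $G\cdot F=F\circ G$. Unwinding \eqref{mulbis1ex} produces $\sigma_{G}\ast\sigma_{F}(a\ot\tilde{a})=F\bigl(G(\zero{a})\tuno{\one{a}}\bigr)F(\tdue{\one{a}})\,\tilde{a}$; the crucial observation is that $G(\zero{a})\tuno{\one{a}}=\sigma_{G}(\zero{a}\ot\tuno{\one{a}})$ lies in $B$, so the one-sided multiplicativity $F(bc)=F(b)F(c)$ for $b\in B$ applies and collapses this to $F\bigl(G(\zero{a})\tuno{\one{a}}\tdue{\one{a}}\bigr)\tilde{a}=F(G(a))\,\tilde{a}$ by \eqref{p5} and counitality. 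Since $\Aut^{ext}_H(A)$ is a group by Definition~\ref{def:gegaugegroup}, this makes $\sigma_{F}$ invertible for $\ast$ with inverse $\sigma_{F^{-1}}$, so $\widehat{\alpha}$ indeed lands in $\B^{ext}(\C(A, H))$.

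For the inverse map I set $F_{\sigma}(a):=\sigma(\zero{a}\ot\tuno{\one{a}})\tdue{\one{a}}$ as in \eqref{btog}. Unitality, $H$-equivariance via \eqref{p4}, and $F_{\sigma}|_{B}=\sigma\circ s\in\Aut(B)$ are copied from Proposition~\ref{atob}. The new point is the extended multiplicativity $F_{\sigma}(ba)=F_{\sigma}(b)F_{\sigma}(a)$ for $b\in B$: since $b$ is coinvariant, $F_{\sigma}(ba)=\sigma(b\zero{a}\ot\tuno{\one{a}})\tdue{\one{a}}=\sigma\bigl(b\triangleright(\zero{a}\ot\tuno{\one{a}})\bigr)\tdue{\one{a}}$, and the left $B$-linearity of the extended bisection $\sigma$ rewrites this as $\sigma(\zero{a}\ot\tuno{\one{a}})\,(\sigma\circ s)(b)\,\tdue{\one{a}}$, which by centrality equals $F_{\sigma}(a)(\sigma\circ s)(b)=F_{\sigma}(a)F_{\sigma}(b)=F_{\sigma}(b)F_{\sigma}(a)$. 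That $F\mapsto\sigma_{F}$ and $\sigma\mapsto F_{\sigma}$ are mutually inverse is the identical computation as in Proposition~\ref{atob}, namely $\sigma_{F_{\sigma}}=\sigma$ by \eqref{ec1} and $F_{\sigma_{F}}=F$ by \eqref{p5}; in particular $F_{\sigma}$ is invertible precisely because $\sigma$ is $\ast$-invertible and the correspondence intertwines the two products. The vertical statement is then immediate, since $F|_{B}=\id_{B}$ if and only if $\sigma_{F}\circ s=\id_{B}$, so $\widehat{\alpha}$ restricts to $\widehat{\alpha}_{v}$ between the vertical subgroups.

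I expect the main obstacle to be purely one of bookkeeping: in each step where Proposition~\ref{atob} invoked the full algebra-map property of $F$ or of $\sigma$, I must isolate the precise spot where the argument only ever multiplies by an element of $B$, so that the weaker hypotheses $F(ba)=F(b)F(a)$ and $B$-linearity of $\sigma$, together with the centrality of $B$, are enough. No identity beyond those already used in Proposition~\ref{atob} should be needed.
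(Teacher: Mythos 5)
Your proposal is correct and follows essentially the same route as the paper: the paper's own proof also defines $\widehat{\alpha}$ by \eqref{gtob} and its inverse by \eqref{btog}, verifies exactly the two new points you isolate (coring $B$-linearity of $\sigma_F$ via centrality, and $F_\sigma(ba)=F_\sigma(b)F_\sigma(a)$ via the left $B$-linearity of $\sigma$), and then defers the remaining verifications to Proposition~\ref{atob} ``minus the algebra map parts.'' Your explicit tracking of where only the one-sided multiplicativity $F(bc)=F(b)F(c)$ for $b\in B$ is needed (e.g.\ in collapsing $F(G(\zero{a})\tuno{\one{a}})F(\tdue{\one{a}})$) is precisely the bookkeeping the paper leaves implicit.
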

\begin{proof}
This uses the same methods as Proposition \ref{atob}.
Given $F\in\Aut^{ext}_H(A)$, define its image as in \eqref{gtob}: $\sigma_{F}(a\ot \tilde{a})=F(a)\tilde{a}$.
Being $B$ in the centre, for all $b\in B$ we have,
\begin{align*}
\sigma_{F}( a\ot \tilde{a} b)& = F(a) \tilde{a} \, b = \sigma_{F}(a\ot \tilde{a}) \, b \, , \\
    \sigma_{F}(b a \ot \tilde{a})&=F(b a)\tilde{a} = F(b) F(a)\tilde{a} = \sigma_{F}(a\ot \tilde{a}) \, (\sigma_{F}\circ s)(b),
\end{align*}
that is, $\sigma_F\big((a\ot\tilde{a})\triangleleft b\big)=\sigma_F(a\ot\tilde{a})\, b$, and
$\sigma_F\big(b\triangleright(a\ot\tilde{a})\big)=\sigma_F(a\ot\tilde{a}) \, (\sigma_F \circ s)(b)$.
Conversely, for $\sigma\in \B^{ext}(\C(A, H))$, define its image as in \eqref{btog}:
$F_{\sigma}(a)=\sigma(\zero{a}\ot\tuno{\one{a}})\tdue{\one{a}}$.
Then $F_{\sigma}(ba)=\sigma(b\zero{a}\ot\tuno{\one{a}})\tdue{\one{a}}=(\sigma\circ s)(b)F_{\sigma}(a) =
F_{\sigma}(b) F_{\sigma}(a)$, due to $B$ in the centre of $A$. The rest of the proof goes as that of Proposition \ref{atob}.
(minus the algebra map parts).\end{proof}

\section{Bisections and gauge groups of Galois objects}

From now on we shall concentrate on \emph{Galois objects} of a Hopf algebra $H$.
These are noncommutative principal bundles over a point.
In contrast to the classical result that any fibre bundle over a point is trivial,
the set $\mbox{Gal}_H(\IC)$ of isomorphic classes of $H$-Galois objects need
not be trivial (cf. \cite{Bich}, \cite{kassel-review}). We shall illustrate later on this non-triviality with examples coming from group algebras and Taft algebras.

\subsection{Galois objects}
\begin{defi}\label{Galoisobject}
Let $H$ be a Hopf algebra, a $H$-\textit{Galois object} of $H$ is an
$H$-Hopf--Galois extension $A$ of the ground field $\IC$.
\end{defi}
Thus for a Galois object the coinvariant subalgebra is the ground field $\IC$.
Recall from Section \ref{acat} that
an $(A,H)$-relative Hopf module $M$ is a right $H$-comodule with a compatible right $A$-module structure.
That is the action is a morphism of $H$-comodules such that
$ \delta^M( ma) = \zero{m} \zero{a} \ot \one{m} \one{a}$ for all $a \in A$, $m\in M$. We have the following \cite{schau}:
\begin{lem}\label{baiso}
Let $M$ be an $(A,H)$-relative Hopf module. If $A$ is faithfully flat over $\IC$, the multiplication induces an isomorphism
$$
M^{co \, H} \ot A \to M,
$$
whose inverse is $M \ni m \mapsto \zero{m} \tuno{\one{m}} \ot \tdue{\one{m}} \in M^{co \, H} \ot A$.
\end{lem}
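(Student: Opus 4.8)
The plan is to show that the map $\mu : M^{co\,H}\ot A \to M$, $m_0\ot a \mapsto m_0 a$, and the proposed map $\nu : M \to M^{co\,H}\ot A$, $m \mapsto \zero{m}\tuno{\one{m}}\ot \tdue{\one{m}}$, are mutually inverse. Since $\mu$ is manifestly the restriction of the module multiplication and hence $\IC$-linear, the whole content lies in checking that $\nu$ lands in $M^{co\,H}\ot A$ and that the two composites give the respective identities. First I would verify that $\nu$ is well defined, i.e. that $\zero{m}\tuno{\one{m}}\in M^{co\,H}$ for every $m\in M$. Applying the coaction $\delta^M$ to $\zero{m}\tuno{\one{m}}$ and using the relative Hopf module compatibility $\delta^M(\zero{m}\,\tuno{\one{m}}) = \zero{\zero{m}}\,\zero{\tuno{\one{m}}}\ot \one{\zero{m}}\,\one{\tuno{\one{m}}}$ together with property \eqref{p1} for the translation map, the right leg should collapse to $1_H$, exactly as in the computation that established well-definedness in the proof of Proposition~\ref{prop:gsob}. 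This confirms $\nu(m)\in M^{co\,H}\ot A$.

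Next I would compute $\mu\circ\nu = \id_M$. This is the most direct of the two composites:
\[
\mu(\nu(m)) = \zero{m}\,\tuno{\one{m}}\,\tdue{\one{m}} = \zero{m}\,\epsilon(\one{m})\,1_A = m,
\]
using property \eqref{p5} ($\tuno{h}\tdue{h}=\epsilon(h)1_A$) and then the counit axiom for the comodule $M$. For the reverse composite $\nu\circ\mu = \id_{M^{co\,H}\ot A}$, I would start from an element $m_0\ot a$ with $m_0\in M^{co\,H}$ (so $\delta^M(m_0)=m_0\ot 1_H$) and compute
\[
\nu(m_0 a) = \zero{(m_0 a)}\,\tuno{\one{(m_0 a)}}\ot \tdue{\one{(m_0 a)}}
= m_0\,\zero{a}\,\tuno{\one{a}}\ot \tdue{\one{a}},
\]
where the coinvariance of $m_0$ kills its coaction leg. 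The claim then follows from property \eqref{p3}, which for a Galois object reads $\zero{a}\tuno{\one{a}}\ot_B \tdue{\one{a}} = 1_A\ot_B a$; since here $B=\IC$, this gives $\zero{a}\tuno{\one{a}}\ot \tdue{\one{a}} = 1_A\ot a$, so $\nu(m_0 a)=m_0\ot a$ as required.

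The main obstacle is largely conceptual rather than computational: one must be careful about where the faithful flatness of $A$ over $\IC$ is actually used, since over a field this hypothesis is automatic and the two maps are honest inverses by the translation-map identities alone. The flatness is what guarantees that $M^{co\,H}$ behaves well and that the identification is natural, but the explicit inverse is verified purely through \eqref{p5}, the counit axiom, and \eqref{p3}. A secondary point of care is the bookkeeping in the well-definedness step: one must correctly track which Sweedler indices belong to $\delta^M$, which to the coproduct of $H$, and which to the translation map, and invoke \eqref{p1} in precisely the form that forces the $H$-leg to $1_H$. Once these Sweedler manipulations are organised cleanly, both composites reduce to single applications of the translation-map properties already recorded in the excerpt.
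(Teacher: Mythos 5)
The paper does not actually prove this lemma: it is quoted from Schauenburg's paper \cite{schau} and used as a black box, so there is no in-paper argument to compare against. Your proof is a correct, self-contained verification and is the standard one. The two composites are handled exactly right: $\mu\circ\nu=\id_M$ follows from \eqref{p5} and the counit axiom, and $\nu\circ\mu=\id$ follows from the coinvariance of $m_0$ together with \eqref{p3} (which over $B=\IC$ is an identity in $A\ot A$, so it can be pushed into $M\ot A$ by applying $m_0\cdot(-)\ot\id_A$). The only place where you should be a bit more explicit is the well-definedness step: the Sweedler computation with \eqref{p1} shows that
$(\delta^M\ot\id_A)\bigl(\zero{m}\tuno{\one{m}}\ot\tdue{\one{m}}\bigr)=\zero{m}\tuno{\one{m}}\ot 1_H\ot\tdue{\one{m}}$,
and one then needs flatness of $A$ over $\IC$ to identify the kernel of $(\delta^M-\id\ot 1_H)\ot\id_A$ on $M\ot A$ with $M^{co\,H}\ot A$ — this is precisely the exact-sequence argument spelled out in the proof of Proposition~\ref{prop:gsogg} (a closer model than Proposition~\ref{prop:gsob}, which you cite), and it is the one genuine, if automatic over a field, use of the faithful-flatness hypothesis. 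With that point made precise, the argument is complete.
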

With coaction $\delta^A : A \to A\ot H$, $\delta^A(a) = \zero{a}\ot\one{a}$,
and translation map $\tau : H \to A \ot A$, $\tau(h)=\tuno{h}\ot\tdue{h}$,
for the Ehresmann--Schauenburg bialgebroid of a Galois object, being $B=\IC$ one has
(see also \cite[Def. 3.1]{schau}):
\begin{align}
\C(A, H)
& = \{a\ot  \tilde{a}\in A\ot  A \, : \,\, \zero{a}\ot  \zero{\tilde{a}}\ot  \one{a}\one{\tilde{a}}=a\ot  \tilde{a}\ot  1_H \} \label{ec2c} \\
&=\{a\ot  \tilde{a}\in A\ot  A : \,\, \zero{a}\ot \tuno{\one{a}} \ot\tdue{\one{a}} \tilde{a} = a\ot \tilde{a} \ot 1_A\} \label{ec1c}  \, .
\end{align}
The coproduct \eqref{copro} and counit \eqref{counit} become $\Delta_\C(a\ot  \tilde{a}) = \zero{a} \ot \tuno{\one{a}} \ot \tdue{\one{a}} \ot \tilde{a}$,
and $\epsilon_\C(a\ot  \tilde{a})=a\tilde{a} \in \IC$ respectively, for any $a\ot\tilde{a}\in \C(A, H)$. But now there is also an antipode \cite[Thm. 3.5]{schau} given, for any $a\ot\tilde{a}\in \C(A, H)$, by
\beq\label{anti}
S_{\C}(a\ot \tilde{a}):=\zero{\tilde{a}}\ot \tuno{\one{\tilde{a}}}a\tdue{\one{\tilde{a}}} \, .
\eeq
 Thus the Ehresmann--Schauenburg bialgebroid of a Galois object is a Hopf algebra.

 Now, given that $\C(A, H)=(A\ot A)^{coH}$, Lemma \ref{baiso} yields an isomorphism
 \beq\label{baiso-b}
 A \ot A \simeq \C(A, H) \ot A \, , \qquad \widetilde{\chi}(a\ot  \tilde{a}) = \zero{a} \ot \tuno{\one{a}} \ot \tdue{\one{a}} \tilde{a} \, .
 \eeq
 We finally collect some results of \cite{schau} (cf. Lemma 3.2 and Lemma 3.3) in the following:
 \begin{lem}\label{universal}
 Let $H$ be a Hopf algebra, and $A$ a (faithfully flat) $H$-Galois object of $H$.
 There is a right $H$-equivariant algebra map $\delta^{\C} : A \to \C(A, H) \ot A$ given by
 $$
\delta^{\C}(a) = \zero{a} \ot \tuno{\one{a}} \ot \tdue{\one{a}}
$$
 which is universal in the following sense: Given an algebra $M$ and a $H$-equivariant algebra map
 $\phi : A \to M \ot A$, there is a unique algebra map $\Phi :  \C(A, H) \to M$ such that
 $\phi = (\Phi \ot \id_A) \circ \delta^{\C}$. Explicitly, $\Phi(a \ot  \tilde{a}) \ot 1_A = \phi(a) \tilde{a}$.
\end{lem}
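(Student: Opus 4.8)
The plan is to check in turn that $\delta^{\C}$ is a well-defined $H$-equivariant algebra map and then to establish the universal property, letting the isomorphism $\widetilde{\chi}$ of \eqref{baiso-b} do the heavy lifting at the end. First I would observe that $\delta^{\C}$ is just the restriction of $\widetilde{\chi}$ to $A\cong A\ot 1_A$, since $\widetilde{\chi}(a\ot 1_A)=\zero{a}\ot\tuno{\one{a}}\ot\tdue{\one{a}}$; in particular Lemma~\ref{baiso} (applied to $A\ot A$ as a relative Hopf module) already guarantees that $\zero{a}\ot\tuno{\one{a}}$ lands in $\C(A, H)=(A\ot A)^{coH}$, so $\delta^{\C}$ takes values in $\C(A, H)\ot A$. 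That $\delta^{\C}$ is an algebra map for the product \eqref{pro} is a direct computation: expanding $\delta^{\C}(a)\,\delta^{\C}(a')$ and using \eqref{p2} to merge $\tuno{\one{a'}}\tuno{\one{a}}$ with $\tdue{\one{a}}\tdue{\one{a'}}$ into $\tuno{(\one{a}\one{a'})}$ and $\tdue{(\one{a}\one{a'})}$, together with the fact that $\delta^A$ is an algebra map, yields $\delta^{\C}(aa')$. Right $H$-equivariance (for the coaction $\id\ot\delta^A$ on $\C(A, H)\ot A$) follows from applying \eqref{p4} to the last two legs of $(\id_{\C}\ot\delta^A)\circ\delta^{\C}$ and then invoking coassociativity of $\delta^A$.

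For the universal property, given an $H$-equivariant algebra map $\phi:A\to M\ot A$, I would first show that the prescription $\Phi(a\ot\tilde{a})\ot 1_A:=\phi(a)\,\tilde{a}$ (right multiplication on the $A$-leg) really lands in $M\ot 1_A$ when $a\ot\tilde{a}\in\C(A, H)$. This is the crucial point: applying $\phi\ot\id_A\ot\id_H$ to the coinvariance identity \eqref{ec2c}, namely $\zero{a}\ot\zero{\tilde{a}}\ot\one{a}\one{\tilde{a}}=a\ot\tilde{a}\ot 1_H$, and then multiplying the two $A$-legs gives $\phi(\zero{a})\zero{\tilde{a}}\ot\one{a}\one{\tilde{a}}=\phi(a)\tilde{a}\ot 1_H$; by the $H$-equivariance of $\phi$ the left-hand side is exactly $(\id_M\ot\delta^A)(\phi(a)\tilde{a})$, so $\phi(a)\tilde{a}$ is $\delta^A$-coinvariant and hence lies in $M\ot A^{co\,H}=M\ot 1_A$. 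This defines $\Phi:\C(A, H)\to M$ unambiguously.

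Next I would verify that $\Phi$ is a unital algebra map. Unitality is immediate from $\phi(1_A)=1_{M\ot A}$. For multiplicativity the key observation is that, once $\phi(a)\tilde{a}=\Phi(a\ot\tilde{a})\ot 1_A$ and $\phi(a')\tilde{a}'=\Phi(a'\ot\tilde{a}')\ot 1_A$ sit in $M\ot 1_A$, the elements $m'\ot 1_A$ and $1_M\ot\tilde{a}$ commute in the tensor-product algebra $M\ot A$; this lets me rewrite $\phi(aa')\,\tilde{a}'\tilde{a}=\phi(a)\phi(a')\tilde{a}'\tilde{a}$ as $(\phi(a)\tilde{a})(\phi(a')\tilde{a}')=\Phi(a\ot\tilde{a})\Phi(a'\ot\tilde{a}')\ot 1_A$, which matches $\Phi\big((a\ot\tilde{a})\bullet_{\C}(a'\ot\tilde{a}')\big)\ot 1_A=\Phi(aa'\ot\tilde{a}'\tilde{a})\ot 1_A$. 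The factorization $\phi=(\Phi\ot\id_A)\circ\delta^{\C}$ is then short: $(\Phi\ot\id_A)(\zero{a}\ot\tuno{\one{a}}\ot\tdue{\one{a}})=\Phi(\zero{a}\ot\tuno{\one{a}})\ot\tdue{\one{a}}=\phi(\zero{a})\,\tuno{\one{a}}\tdue{\one{a}}$, which collapses to $\phi(a)$ by \eqref{p5} and the counit law.

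Finally, for uniqueness I would exploit that $\widetilde{\chi}$ is an isomorphism. If $\Phi'$ is any algebra map with $\phi=(\Phi'\ot\id_A)\circ\delta^{\C}$, then right multiplying by $\tilde{a}$ shows $(\Phi'\ot\id_A)\circ\widetilde{\chi}(a\ot\tilde{a})=\phi(a)\tilde{a}=(\Phi\ot\id_A)\circ\widetilde{\chi}(a\ot\tilde{a})$ for all $a,\tilde{a}$; since $\widetilde{\chi}$ is surjective this forces $\Phi'\ot\id_A=\Phi\ot\id_A$, and evaluating on $c\ot 1_A$ yields $\Phi'=\Phi$. I expect the main obstacle to be the well-definedness of $\Phi$, that is seeing that $\phi(a)\tilde{a}\in M\ot 1_A$: this is the single step where the $H$-equivariance of $\phi$ and the coinvariance of $a\ot\tilde{a}$ must be combined, and keeping the Sweedler legs and the right $A$-action straight needs care; the algebra-map check is then bookkeeping once the commuting-tensor-factor idea is in hand.
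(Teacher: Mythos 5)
Your proof is correct. There is, however, no in-paper argument to compare it against: the paper states this lemma without proof, importing it from \cite{schau} (Lemmas 3.2 and 3.3), so your write-up is a self-contained reconstruction — and it follows the standard (essentially Schauenburg's) route. Faithful flatness enters exactly where it should, namely through the isomorphism $\widetilde{\chi}$ of \eqref{baiso-b}, which you use both to see that $\delta^{\C}$ lands in $\C(A,H)\ot A$ and to force uniqueness of $\Phi$; existence rests on the coinvariance computation showing $\phi(a)\tilde{a}\in(M\ot A)^{co\,H}=M\ot A^{co\,H}=M\ot 1_A$, which is indeed the one step combining equivariance of $\phi$ with \eqref{ec2c}. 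The only places a careful reader might pause are the multiplicativity of $\Phi$ and the collapse $(\Phi\ot\id_A)\,\delta^{\C}(a)=\phi(\zero{a})\,\tuno{\one{a}}\tdue{\one{a}}=\phi(a)$: the identity $\phi(a')\tilde{a}'=\Phi(a'\ot\tilde{a}')\ot 1_A$ holds for the summed expression, not termwise, so the ``commuting tensor factor'' substitution must be carried out by applying a linear map to the whole sum rather than to individual Sweedler legs; this is precisely the bookkeeping you flag, and it goes through.
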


The ground field $\IC$ being undoubtedly in the centre,
for the bisections of the Ehresmann--Schauenburg bialgebroid $\C(A, H)$ of a Galois object $A$, we can use all results of previous sections. Clearly, any bisection of $\C(A, H)$ is now vertical as it is vertical any automorphism of the principal bundle  $A$. In fact bisections, being algebra maps, are just characters of the Hopf algebra
$\C(A, H)$ with convolution product in \eqref{mulbis} and inverse in \eqref{invobis} that, with the antipode in \eqref{anti}
can be written as $\sigma^{-1} = \sigma \circ S_{\C}$, as is the case for characters. From Proposition \ref{atob} we have then the isomorphism
\beq\label{isogen}
\Aut_{H}(A) \simeq \B(\C(A, H)) = \mbox{Char}(\C(A, H)).
\eeq
As for extended bisections and automorphisms as in Section \ref{extended} we have analogously from Proposition \ref{lem:gatb} the isomorphism,
\beq\label{isogenext}
\Aut^{ext}_H(A) \simeq \B^{ext}(\C(A, H)) = \mbox{Char}^{ext}(\C(A, H)) \, ,
\eeq
with $\mbox{Char}^{ext}(\C(A, H))$ the group of convolution invertible unital maps
$ \phi: \C(A, H) \to \IC$.

\subsection{Hopf algebras as Galois objects}
Any Hopf algebra $H$ is a $H$-Galois object with coaction given by its coproduct.
Then $H$ is isomorphic to the corresponding left bialgebroid $\C(H, H)$.

Let $H$ be a Hopf algebra with coproduct $\Delta(h)=\one{h}\ot\two{h}$. For the corresponding coinvariants: 
$\one{h}\ot\two{h}=h\ot 1$, we have $\epsilon(\one{h})\ot\two{h}=\epsilon(h)\ot 1$, this imply $h=\epsilon(h)\in \IC$ and
$H^{co \, H}=\IC$. 
Moreover, the canonical Galois map $\chi :  g \ot h \mapsto  g \one{h} \ot \two{h}$ is bijective with inverse
given by  
$\chi^{-1}( g \ot h ):=  g \, S(\one{h})\ot\two{h}$. Thus $H$ is a $H$-Galois object.

With $A=H$, the corresponding left bialgebroid becomes
\begin{align}
\C(H, H) & = \{ g \ot  h \in H \ot H : \,\, \one{g} \ot \one{h} \ot \two{g} \two{h} = g \ot h \ot  1_H \} \nn \\
& = \{g \ot h \in H\ot H : \,\, \one{g} \ot S(\two{g}) \ot \three{g} h = g \ot h \ot 1_A \} . \label{biHH} 
\end{align}
We have a linear map $\phi:\C(H, H) \to H$ given by $\phi(g\ot h):=g \, \epsilon(h)$. 
The map $\phi$ has inverse $\phi^{-1}: H\to\C(H, H)$, defined by $\phi^{-1}(h):=\one{h}\ot S(\two{h})$. 
This is well defined since 
\begin{align*}
\Delta^{H \ot H}(\one{h}\ot S(\two{h}))=\one{h}\ot S(\four{h})\ot \two{h}S(\three{h})=\one{h}\ot S(\two{h})\ot 1_{H}, 
\end{align*}
showing that $\one{h}\ot S(\two{h}) \in\C(H, H)$. 
 Moreover,  
 \begin{align*}
\phi(\phi^{-1}(h))=\phi(\one{h}\ot S(\two{h}))=h ,
\end{align*}
and
\begin{align*}
\phi^{-1}(\phi(g\ot h)) = \epsilon(h) \, \phi^{-1}(g) = \epsilon(h) \, \one{g}\ot S(\two{g}) = g \ot h .
\end{align*}
Here the last equality is obtained from the condition $\one{g}\ot S(\two{g})\ot \three{g}h=g\ot h\ot 1_{H}$ 
(for any $g\ot h\in\C(H, H)$, as in the second line of \eqref{biHH}) by 
applying $\id_{H} \ot \id_{H} \ot \epsilon $ on both sides and then multiplying the second and third factors:
$$
\one{g}\ot S(\two{g}) \, \epsilon(\three{g}) \epsilon(h) = g \ot h \, \epsilon(1_H) 
\quad \Longrightarrow \quad \epsilon(h) \, \one{g}\ot S(\two{g}) = g \ot h .
$$
The map $\phi$ is an algebra map: 
\begin{align*}
\phi((g \ot  h) \bullet_{\C} (g' \ot  h')) & =
\phi (g g' \ot  h' h) = gg'  \epsilon(h') \epsilon(h) \\ &= \phi(g \ot  h) \bullet_{\C} \phi(g' \ot  h').
\end{align*}  
It is also a coalgebra map: 
\begin{align*}
(\phi\ot\phi)(\Delta_{\C}(g\ot h))&=(\phi\ot\phi)(\one{g}\ot \tuno{\two{g}}\ot \tdue{\two{g}}\ot h)\\
&=(\phi\ot\phi)(\one{g}\ot S(\two{g})\ot \three{g}\ot h)\\
&=\one{g}\ot\two{g} \, \epsilon(h)\\
&=\Delta_{H}(\phi(g\ot h));
\end{align*}
\begin{align*}
\epsilon_{\C}(g\ot h) = gh = \epsilon_{H}(gh)=\epsilon_{H}(g)\epsilon_{H}(h)=\epsilon_{H}(\phi(g\ot h)) \, .
\end{align*}

\subsection{Cocommutative Hopf algebras}
We start with a class of examples coming from cocommutative Hopf algebras.
From \cite{schau} (Remark 3.8 and Theorem 3.5.) we have:
\begin{lem}\label{Go}
Let $H$ be a cocommutative Hopf algebra, and let $A$ be a $H$-Galois object.
Then the bialgebroid $\C(A, H)$ is isomorphic to $H$ as Hopf algebra.
\end{lem}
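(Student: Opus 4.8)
The plan is to produce the isomorphism as an algebra map \emph{out of} $\C(A, H)$ by means of the universal property of Lemma \ref{universal}, and then to upgrade it to a Hopf algebra map. For the universal property I need an $H$-equivariant algebra map from $A$ to $H\ot A$, and the natural candidate is the flipped coaction $\phi: A\to H\ot A$, $\phi(a)=\one{a}\ot\zero{a}$. That $\phi$ is a unital algebra map is immediate from $\delta^A$ being one. The point where cocommutativity of $H$ is indispensable is the $H$-equivariance of $\phi$: comparing $(\phi\ot\id_{H})\circ\delta^A$ with the coaction on $H\ot A$ evaluated on $\phi$, one is left with the two expressions $\one{a}\ot\zero{a}\ot\two{a}$ and $\two{a}\ot\zero{a}\ot\one{a}$, which agree exactly when $H$ is cocommutative. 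Granting this, Lemma \ref{universal} yields a unique algebra map $\Phi:\C(A, H)\to H$ determined by $\Phi(a\ot\tilde{a})\ot 1_{A}=\one{a}\ot\zero{a}\tilde{a}$.

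For the inverse I take $\gamma:=\tau\circ S: H\to\C(A, H)$, $\gamma(h)=\tuno{S(h)}\ot\tdue{S(h)}$. First I would check that $\tau$ already lands in $\C(A, H)$: computing the diagonal coaction on $\tuno{h}\ot\tdue{h}$ with \eqref{p1} and reducing with \eqref{p5} produces $\tuno{\two{h}}\ot\tdue{\two{h}}\ot S(\one{h})\three{h}$, and cocommutativity together with the antipode axiom collapses the last leg to $1_{H}$; hence $\tau(h)\in\C(A, H)$ and a fortiori $\gamma(h)\in\C(A, H)$. Next, $\gamma$ is an algebra map: the translation map is anti-multiplicative by \eqref{p2}, that is $\tau(h)\bullet_{\C}\tau(k)=\tau(kh)$, and since $S$ is also anti-multiplicative the composite $\tau\circ S$ is multiplicative. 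That $\Phi$ and $\gamma$ are mutually inverse I would see as follows. A short computation multiplying the two $A$-legs in \eqref{p1} and using \eqref{p5} gives $\zero{\tuno{h}}\,\tdue{h}\ot\one{\tuno{h}}=1_{A}\ot S(h)$, whence $\Phi\circ\tau=S$ and therefore $\Phi\circ\gamma=S\circ S=\id_{H}$ because $S^{2}=\id$ for a cocommutative Hopf algebra. For the other composite I would invoke the uniqueness clause of Lemma \ref{universal}: both $\gamma\circ\Phi$ and $\id_{\C(A, H)}$ are algebra endomorphisms of $\C(A, H)$, so they coincide once $(\Psi\ot\id_{A})\circ\delta^{\C}$ agree for $\Psi=\gamma\circ\Phi$ and $\Psi=\id_{\C(A, H)}$, and this last equality is again a translation-map identity that holds by cocommutativity.

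It remains to see that $\Phi$ is a coalgebra map, which upgrades the algebra isomorphism to a Hopf algebra isomorphism (a bialgebra isomorphism between Hopf algebras automatically intertwines the antipodes, so the antipode \eqref{anti} needs no separate verification). The counit is immediate: applying $\epsilon_{H}\ot\id_{A}$ to $\Phi(a\ot\tilde{a})\ot 1_{A}=\one{a}\ot\zero{a}\tilde{a}$ gives $\epsilon_{H}(\Phi(a\ot\tilde{a}))=a\tilde{a}=\epsilon_{\C}(a\ot\tilde{a})$. For the coproduct I would compare $(\Phi\ot\Phi)\circ\Delta_{\C}$ with $\Delta_{H}\circ\Phi$ on $a\ot\tilde{a}$, expanding $\Delta_{\C}(a\ot\tilde{a})=\zero{a}\ot\tuno{\one{a}}\ot\tdue{\one{a}}\ot\tilde{a}$ and applying the defining relation of $\Phi$ to each tensor leg, the necessary rearrangements again coming from \eqref{p1}, \eqref{p2}, \eqref{p5} and $S^{2}=\id$.

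The main obstacle is concentrated in exactly the places where the hypothesis is used: establishing the $H$-equivariance of the flipped coaction $\phi$, and in the same vein the well-definedness of $\gamma$ and the identity $\gamma\circ\Phi=\id$, each of which fails without cocommutativity; together with the Sweedler bookkeeping for the coproduct compatibility, which interlaces the translation-map identities with the relation $S^{2}=\id$. Everything else is routine once the flip map is shown to be equivariant and the explicit inverse $\gamma=\tau\circ S$ is in hand.
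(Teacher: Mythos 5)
Your proposal follows essentially the same route as the paper: the map $\Phi$ is obtained from Lemma \ref{universal} applied to the flipped coaction $a\mapsto\one{a}\ot\zero{a}$ (which the paper phrases as $A$ being a \emph{left} $H$-Galois object with canonical map $\chi_L$), and the inverse is $\tau\circ S$ (the paper's $\tau_L$); your verifications of equivariance, of $\tau(h)\in\C(A,H)$, of anti-multiplicativity of $\tau$, and of $\Phi\circ\tau=S$ are all correct, and the appeal to uniqueness for $\gamma\circ\Phi=\id$ is legitimate since $(\gamma\ot\id_A)\circ\delta_L=\delta^{\C}$ reduces to $\tuno{S(\one{a})}\ot\tdue{S(\one{a})}\zero{a}=a\ot 1_A$. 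The only place you stop short is the coproduct compatibility, which is precisely the one laborious Sweedler computation the paper does carry out (there phrased as $\Delta_{\C}\circ\tau=(\tau\ot\tau)\circ\Delta_H$, using \eqref{p1}, \eqref{p6} twice and cocommutativity), so that step should be written out rather than asserted.
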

\begin{proof}
We give a sketch of the proof that uses Lemma \ref{universal}.
Start with the coaction $\delta^A : A \to A\ot H$, $\delta^A(a) = \zero{a}\ot\one{a}$, and translation map $\tau(h)=\tuno{h}\ot\tdue{h}$. 
Firstly, the image of $\tau$ is in $\C(A, H)$; indeed, for any $h\in H$, we get
\begin{align*}
\zero{\tuno{h}}\ot\zero{\tdue{h}}\ot\one{\tuno{h}}\one{\tdue{h}}&=\zero{\tuno{\one{h}}}\ot\tdue{\one{h}}\ot\one{\tuno{\one{h}}}\two{h}\\
&=\tuno{\two{\one{h}}}\ot \tdue{\two{\one{h}}}\ot S(\one{\one{h}})\two{h}\\
&=\tuno{h}\ot\tdue{h}\ot 1_{H},
\end{align*}
where the first step uses \eqref{p4}, and the second step uses \eqref{p1}. While $\tau$ is not an algebra map, being $H$ cocommutative, it is a coalgebra map.
Indeed, for any $h\in H$, 
\begin{align*}
\Delta_{\C}(\tau(h))&=\zero{\tuno{h}}\ot \tau(\one{\tuno{h}}) \ot\tdue{h}\\
&=\tuno{\two{h}} \ot \tau(S(\one{h})) \ot \tdue{\two{h}}\\
&=\tuno{\two{h}}\ot\tdue{\two{h}}\tuno{\three{h}} \, \tau(S(\one{h})) \, \tdue{\three{h}}\tuno{\four{h}}\ot\tdue{\four{h}}\\
&=\tuno{\three{h}}\ot\tdue{\three{h}}\tuno{\two{h}} \, \tau(S(\one{h})) \, \tdue{\two{h}}\tuno{\four{h}}\ot\tdue{\four{h}}\\
&=\tuno{\one{h}}\ot\tdue{\one{h}} \, \ot \tuno{\two{h}} \ot\tdue{\two{h}}\\
&=(\tau\ot\tau)(\Delta_{H}(h)) \, ,
\end{align*}
where the 2nd step uses \eqref{p1}: $\tuno{\two{h}} \ot \tdue{\two{h}} \ot S(\one{h}) = \zero{\tuno{h}} \ot {\tdue{h}}  \ot \one{\tuno{h}}$, 
the 3nd step uses twice \eqref{p6}: $\tuno{\one{h}}\ot\tdue{\one{h}}\tuno{\two{h}}\ot\tdue{\two{h}}=\tuno{h}\ot1_{A}\ot\tdue{h}$;
the 4rd step uses $H$ is cocommutative: we change the lower indices 2 and 3; 
and the 5th one uses: $\epsilon(h) \ot 1_A=\tau(S(\one{h})\two{h})=\tuno{\two{h}} \, \tau(S(\one{h}))\, \tdue{\two{h}}$.
Also,
\begin{align*}
\epsilon_{\C}(\tau(h))=\tuno{h}\tdue{h}=\epsilon(h)1_{A}. 
\end{align*} 
On the other hand, since $H$ is cocommutative, $A$ is also a left $H$-Galois object with coaction 
$\delta_L(a) = \one{a} \ot \zero{a}$ and bijective canonical map
$\chi_L(a\ot \tilde{a})=\one{a} \ot \zero{a} \tilde{a}$. 
The corresponding translation map is then $\tau_L = \tau \circ S$ where $S = S^{-1}$ (since $H$ is cocommutative) is the antipode of $H$. The map $\tau_L$ is a coalgebra map being the composition of two such maps (for $S$ this is the case again due to $H$ cocommutative). 

From the universality of Lemma \ref{universal},
there is a unique algebra map $\Phi :  \C(A, H) \to H$ such that $\delta_L = (\Phi \ot \id_A) \circ \delta^{\C}$; 
where $\delta^{\C} : H \to \C(A, H) \ot H$ as in the lemma. 
Explicitly,
$\Phi(a \ot  \tilde{a}) \ot 1_A = \delta_L(a) \tilde{a} = {\chi_L}_{|_\C} (a\ot \tilde{a})$ for $a\ot \tilde{a}\in \C(A, H)$. Indeed, with the isomorphism
$\widetilde{\chi}$ in \eqref{baiso-b}, the map $\Phi$ is such that $\chi_L = (\Phi \ot \id_A) \circ \widetilde{\chi}$, thus is an isomorphism since $\chi_L$ and $\widetilde{\chi}$ are such. The map $\Phi$ has inverse $\Phi^{-1} = \tau_L $ and thus is a coalgebra map. 
\end{proof}

\noindent
Consequently, the isomorphisms \ref{isogen} and \ref{isogenext} for a cocommutative Hopf algebra $H$ are:
\beq\label{isogenH}
\Aut_{H}(A) \simeq \B(\C(A, H)) = \mbox{Char}(H)
\eeq
and
\beq\label{isogenextH}
\Aut^{ext}_H(A) \simeq \B^{ext}(\C(A, H)) = \mbox{Char}^{ext}(H) \, ,
\eeq
with $\mbox{Char}^{ext}(H)$ the group of convolution invertible unital maps $\phi: H \to \IC$ and $\mbox{Char}(H)$
the subgroup of those which are algebra maps (the characters of $H$).

\subsection{Group Hopf algebras} \label{gradedalg} 
Let $G$ be a  group, with neutral element $e$, and $H=\IC[G]$ be its group algebra. 
Its elements are finite sums $\sum \lambda_g \, g$ with $\lambda_g$ complex number. We assume that $\{g\, ,  g \in G\}$ is a vector space basis. The product in $\IC[G]$ follows from the group product in $G$, with algebra unit $1_{\IC[G]}=e$. The coproduct, counit, and antipode, making $\IC[G]$ a Hopf algebra are defined by $\Delta(g) = g \ot g$, $\epsilon(g)=1$, $S(g)=g^{-1}$. 

An algebra $A$ 
is $G$-graded, that is $A=\oplus_{g \in G} \, A_g$ and $A_g A_{h} \subseteq A_{gh}$ for all $g,h \in G$, 
if and only if $A$ is a right $\IC[G]$-comodule  algebra with coaction 
$\delta^A: A \to A \ot \IC[G]$, $a \mapsto \sum a_g \ot g $ for $a= \sum a_g$, $a_g \in A_g$.
Moreover,
the algebra $A$ is strongly $G$-graded, that is $A_g A_h =A_{gh}$,  
if and only if $A_e=A^{co\, \IC[G]} \subseteq A$ is Hopf--Galois (see e.g. \cite[Thm.8.1.7]{mont}).
Thus $\IC[G]$-Hopf--Galois extensions are the same as $G$-strongly graded algebras.  

In particular, if $A$ is a $\IC[G]$-Galois object, that is $A_e=\IC$, each component $A_g$ 
is one-dimensional. If we pick a non-zero element $u_g$ in each $A_g$, 
the multiplication of $A$ is determined by the products $u_g u_h$ for each pair $g, h$ of elements of G. 
We then have 
\beq\label{mult}
u_{g}u_{h}=\lambda(g, h) \, u_{gh}
\eeq 
for a non vanishing $\lambda(g, h)\in \IC$. We get then a map
$\lambda : G \times G \to \IC^{\times}$ which is in fact a two cocycle $\lambda \in H^2(G, \IC^{\times})$. 
Indeed, associativity of the product requires that $\lambda$ satisfies a 2-cocycle condition, that is for any $g, h\in G$, 
\beq\label{two-coc}
\lambda(g, h)\lambda(gh, k)=\lambda(h, k)\lambda(g, hk). 
\eeq
If we choose a different non-zero element $v_g \in A_g$, we shall have $v_g = \mu(g) u_g$, 
for some non-zero $\mu(g) \in \IC$. The multiplication \eqref{mult} will become $v_{g}v_{h}=\lambda'(g, h)v_{gh}$ with 
\beq
\lambda'(g, h) = \mu(g) \mu(h) (\mu(gh))^{-1} \lambda (g, h), 
\eeq
that is the two 2-cocycles $\lambda'$ and $\lambda$ are cohomologous. 
It is easy to check that for any map $\mu(g) : G \to \IC^{\times}$ the assignment $(g,h) \mapsto \mu(g) \mu(h) (\mu(gh))^{-1}$, is a coboundary, that is a `trivial' 2-cocycle which is cohomologous to $ \lambda (g, h) = 1$. 
Thus the multiplication in $A$ depends only on the second cohomology class of $\lambda \in H^2(G, \IC^{\times})$, the second cohomology group of $G$ with values in $\IC^{\times}$. We conclude that the equivalence classes of  $\IC[G]$-Galois objects are in bijective correspondence with the cohomology group $H^2(G, \IC^{\times})$.
\begin{exa}
From \cite[Ex. 7.13]{kassel-review} we have the following. 
For any cyclic group $G$ (infinite or not) one has $H^2(G, \IC^{\times})=0$. Thus any corresponding 
$\IC[G]$-Galois object is trivial.
On the other hand, for the free abelian group of rank $r \geq 2$, one has
$$
H^2(\IZ^r, \IC^{\times}) = (\IC^{\times})^{r(r-1)/2} \, .
$$
Hence, there are infinitely many isomorphism classes of $\IC[\IZ^r]$-Galois objects.
\end{exa}

Since $H=\IC[G]$ is cocommutative, we know from above that the corresponding bialgebroids $\C(A, H)$ are all 
isomorphic to $H$ as Hopf algebra. It is instructive to show this directly. Clearly, for any $u_{g}\ot u_h \in \C(A, H)$
the coinvariance condition $u_{g} \ot u_h \ot g h = u_{g} \ot u_h \ot 1_H$, requires $h=g^{-1}$ so that $\C(A, H)$ is generated as vector space by elements $u_{g}\ot u_{g^{-1}}$, $g\in G$, with multiplication 
\beq\label{prog0}
(u_{g}\ot u_{g^{-1}})\bullet_{\C}(u_{h}\ot u_{h^{-1}}) = \lambda(g, h) \lambda(h^{-1}, g^{-1}) u_{gh}\ot u_{(gh)^{-1}},
\eeq
\begin{lem}
The cocycle $\Lambda(g, h) = \lambda(g, h) \lambda(h^{-1}, g^{-1})$ is trivial in $H^2(G, \IC^{\times})$.
\end{lem}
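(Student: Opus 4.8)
The plan is to exhibit $\Lambda$ directly as a coboundary, that is to produce a $1$-cochain $\nu : G \to \IC^{\times}$ with $\Lambda(g,h) = \nu(g)\,\nu(h)\,\nu(gh)^{-1}$ for all $g,h \in G$. Conceptually the statement is already forced by Lemma~\ref{Go}: since $\IC[G]$ is cocommutative we have $\C(A, H) \cong \IC[G]$ as Hopf algebras, so the $\Lambda$-twisted group algebra read off from the product \eqref{prog0} is isomorphic to the untwisted one and hence $\Lambda$ must be cohomologically trivial. We nonetheless want a hands-on cocycle computation. I will first assume that $\lambda$ is \emph{normalized}, $\lambda(g,e)=\lambda(e,g)=1$ for all $g$, which is harmless since replacing $\lambda$ by a cohomologous normalized representative alters $\Lambda$ only by a coboundary. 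The candidate will be $\nu(g) := \lambda(g, g^{-1})$.

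The verification rests on two specializations of the $2$-cocycle identity \eqref{two-coc}. First, setting $k = h^{-1} g^{-1}$ in \eqref{two-coc}, and using $gh\cdot h^{-1}g^{-1}=e$ together with $h\cdot h^{-1}g^{-1}=g^{-1}$, gives
\[
\lambda(g,h)\,\lambda\big(gh,(gh)^{-1}\big) = \lambda(h, h^{-1} g^{-1})\,\lambda(g, g^{-1}).
\]
Second, taking $(g,h,k) = (h, h^{-1}, g^{-1})$ in \eqref{two-coc} and using $\lambda(e, g^{-1})=1$ gives
\[
\lambda(h^{-1}, g^{-1})\,\lambda(h, h^{-1} g^{-1}) = \lambda(h, h^{-1}).
\]

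Finally I combine the two. Writing $\Lambda(g,h)=\lambda(g,h)\,\lambda(h^{-1},g^{-1})$, solving the first identity for $\lambda(g,h)$, substituting, and then applying the second identity to collapse $\lambda(h,h^{-1}g^{-1})\,\lambda(h^{-1},g^{-1})=\lambda(h,h^{-1})$, yields
\[
\Lambda(g,h)=\frac{\lambda(g,g^{-1})\,\lambda(h,h^{-1})}{\lambda\big(gh,(gh)^{-1}\big)}=\nu(g)\,\nu(h)\,\nu(gh)^{-1},
\]
so $\Lambda$ is a coboundary and $[\Lambda]=0$ in $H^2(G,\IC^{\times})$. The main obstacle is really just guessing the correct cochain $\nu(g)=\lambda(g,g^{-1})$ and locating the two right substitutions into \eqref{two-coc}; once these are in hand the computation is forced. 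A minor point to watch is the normalization: if one does not normalize $\lambda$ at the outset, the same computation produces an extra factor $\lambda(e,e)$, which is itself the coboundary of the constant cochain $g\mapsto\lambda(e,e)$ and can be absorbed into $\nu$.
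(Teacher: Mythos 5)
Your proof is correct and is essentially the paper's own argument: both exhibit $\Lambda$ as the coboundary of $\nu(g)=\lambda(g,g^{-1})$ after normalizing $\lambda(e,e)=1$, arriving at the same identity $\Lambda(g,h)=\lambda(g,g^{-1})\lambda(h,h^{-1})/\lambda(gh,(gh)^{-1})$. The only cosmetic difference is that the paper obtains the two key relations by evaluating $u_g u_h u_{h^{-1}} u_{g^{-1}}$ with two bracketings, which is exactly your pair of specializations of the cocycle identity \eqref{two-coc} in disguise.
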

\begin{proof}
Firstly, we can always rescale $u_e$ to $\lambda(e, e) \, u_e$ so to have $\lambda(e, e) = 1$. Then the cocycle condition \eqref{two-coc} yields $\lambda(g, e)=\lambda(e, g)=\lambda(e, e)=1$, for any $g\in G$. Next, with $u_{g}u_{h}=\lambda(g, h)u_{gh}$  and $u_{h^{-1}} u_{g^{-1}}=\lambda(h^{-1}, g^{-1}) u_{(gh)^{-1}}$, on the one hand we have 
$u_{g}u_{h} u_{h^{-1}} u_{g^{-1}} = \lambda(g, h) \lambda(h^{-1}, g^{-1}) \lambda(gh, (gh)^{-1}) u_e$. 
On the other hand $u_{g}u_{h} u_{h^{-1}} u_{g^{-1}} =  \lambda(g, g^{-1}) \lambda(h, h^{-1})u_e$.  Thus
$$
\Lambda(g, h) = \lambda(g, g^{-1}) \lambda(h, h^{-1}) / \lambda(gh, (gh)^{-1})
$$
showing $\Lambda(g, h)$ is trivial since $\Lambda(g, h) = \mu(g) \mu(h) (\mu(gh))^{-1}$ with 
$\mu(g)=\lambda(g, g^{-1})$.
\end{proof}
\noindent
Consequently, by rescaling the generators $u_{g} \to v_g = \lambda(g, g^{-1})^{-\frac{1}{2}} \, u_{g}$
the multiplication rule \eqref{mult} becomes 
$v_{g}v_{h}=\lambda'(g, h) \, v_{gh}$,
with $\lambda'(g, h)= \Lambda(g, h)^{-\frac{1}{2}} \, \lambda(g, h)$ that we rename back to $\lambda(g, h)$ from now on.
As for the bialgebroid product in \eqref{prog0} one has,
\beq\label{prog}
(v_{g}\ot v_{g^{-1}}) \bullet_{\C}(v_{h}\ot v_{h^{-1}}) = v_{gh}\ot v_{(gh)^{-1}},
\eeq
and the isomorphism $\Phi^{-1} : H \to \C(A, H) $ is simply $\Phi^{-1}(g)=\tau_L(g) = v_{g}\ot v_{g^{-1}}$.

As in \eqref{isogenH}, the group of bisections $\B(\C(A, H))$ of $\C(A, H)$, and the gauge group $\Aut_H(A)$ of the Galois object $A$ coincide with the group of characters on $\IC[G]$, which is in turn the same as $\Hom(G, \IC^{\times})$  the group (for point-wise multiplication) of group morphisms from  $G$ to $\IC^{\times}$.

Explicitly, since $F\in\Aut_H(A)$ is linear on $A$, on a basis  $\{v_{g}\}_{g\in G}$ of $A$, it is of the form
\begin{align*}
F(v_{g})=\sum_{h\in G}f_{h}(g) v_{h},
\end{align*}
for complex numbers, $f_{h}(g) \in \IC$. Then, the $H$-equivariance of $F$, 
\begin{align*}
\zero{F(v_{g})}\ot\one{F(v_{g})}=F(\zero{v_{g}})\ot \one{v_{g}}=F(v_{g})\ot g,
\end{align*}
requires $F(v_{g})$ belongs to $A_{g}$ and we get $f_{h}(g) = 0$, if $h\not=g$ while $f_{g} := f_{g}(g) \in \IC^{\times}$ from the invertibility of $F$. Finally $F$ is an algebra map:
\begin{align*}
\lambda(g, h) f_{gh} v_{gh}=F(\lambda(g, h) \, v_{gh})=F(v_{g} v_{h}) 
= F(v_{g}) F(v_{h})=\lambda(g, h) f_{g} f_{h} \, v_{gh} \, ,
\end{align*}
implies $f_{gh}=f_{g}f_{h}$, for any $g, h\in G$. Thus we re-obtain that $\Aut_{H}(A)\simeq \Hom(G, \IC^{\times})$. 
Note that the requirement $F(v_e=1_A) = 1 = F_e$ implies that $F_{g^{-1}}=(F_{g})^{-1}$.

On the other hand, the group $\Aut^{ext}_{H}(A)$ and then $\B^{ext}(\C(A, H))$ can be quite big.
If $F\in \Aut^{ext}_{H}(A)$ that is one does not require $F$ to be an algebra map, the corresponding $f_g$ 
can take any value in $\IC^{\times}$ with the only condition that $f_e = 1$. 

\subsection{Taft algebras}\label{taftal}
Let $N\geq 2$ be an integer and let $q$ be a primitive $N$-th roots of unity. 
The \emph{Taft algebra} $T_{N}$ is the $N^{2}$-dimensional unital algebra generated 
by generators $x$, $g$ subject to relations:
$$
x^{N}=0\, , \quad g^{N}=1\, , \quad xg - q\, gx=0 \, .
$$
It is a Hopf algebra with coproduct: 
\begin{align*}
\Delta(x):=1\ot x+x\ot g, \qquad \Delta(g):=g \ot g \, ;
\end{align*}
counit:
\begin{align*}
\epsilon(x):=0, \qquad \epsilon(g):=1 \, ;
\end{align*}
and antipode: 
\begin{align*}
S(x): = -g^{-1}x, \qquad S(g) := g^{-1} \, .
\end{align*}
This Hopf algebra is neither commutative nor cocommutative. The four dimensional algebra $T_2$ is also known as the \emph{Sweedler algebra}.

For any $s\in \IC$, let $A_{s}$ be the unital algebra generated by elements $X, G$ with relations: 
$$
X^{N}=s\, , \quad G^{N} = 1\, , \quad X G - q \, GX=0\, . 
$$
The algebra $A_{s}$ is a right $T_{N}$-comodule algebra, with coaction defined by
\begin{align}\label{coTaft}
\delta^A(X):=1\ot x+ X \ot g,  \qquad \delta^A(G):=G \ot g.
\end{align}
Clearly, the corresponding coinvariants are just the ground field $\IC$ and so $A_{s}$ is a $T_{N}$-Galois object.
It is known (cf. \cite{Masuoka}, Prop. 2.17, Prop. 2.22, as well as \cite{schau1}) that any $T_{N}$-Galois object is isomorphic to $A_s$ for some $s\in \IC$ 
and that any two such Galois objects $A_s$ and $A_t$ are isomorphic if and only if $s=t$. 
Thus the equivalence classes of  $T_{N}$-Galois objects are in bijective correspondence with the abelian group $\IC$. 
For the corresponding Ehresmann--Schauenburg bialgebroid $\C(A_s, T_N) = (A_s\ot A_s)^{co \, T_N}$.
\begin{lem}\label{trTaft}
The translation map of the coaction \eqref{coTaft} is given on generators by
\begin{align*}
\tau(g) &= G^{-1} \ot G , \nn  \\
\tau(x) &= 1 \ot X - X G^{-1} \ot G .
\end{align*}

\begin{proof}
We just apply the corresponding canonical map to obtain:
\begin{align*}
\chi \circ \tau (g) & = G^{-1} G \ot g = 1 \ot g , \\
\chi \circ \tau (x) & = 1 \ot x+ X \ot g - XG^{-1} G \ot g = 1 \ot x .
\end{align*}
as it should be.
\end{proof}
\end{lem}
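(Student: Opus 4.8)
The plan is to exploit the defining property of the translation map, namely that $\tau = \chi^{-1}|_{1_A \ot H}$, so that for each $h \in H$ the element $\tau(h)$ is the unique preimage of $1_A \ot h$ under the canonical Galois map $\chi$. Since $A_s$ is a Galois object, $B=\IC$ and thus $A\ot_B A = A \ot A$, and the map $\chi$ is bijective; hence to establish the claimed formulas it suffices to verify that applying $\chi$ to each proposed expression returns $1_A \ot g$ and $1_A \ot x$ respectively. Concretely this is the content of property \eqref{p7}, $\tuno{h}\zero{\tdue{h}} \ot \one{\tdue{h}} = 1_A \ot h$, read for the two generators $h = g$ and $h = x$.

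First I would record what the coaction \eqref{coTaft} does on the elements appearing in the candidate formulas. Because $\chi(a' \ot a) = a' \zero{a} \ot \one{a}$ uses the coaction only on the second tensor factor, the relevant data are $\delta^A(G) = G \ot g$ and $\delta^A(X) = 1 \ot x + X \ot g$; no coaction on the first factor is needed. For the generator $g$, applying $\chi$ to the candidate $G^{-1} \ot G$ gives $G^{-1} G \ot g = 1 \ot g$, using the relation $G^{-1} G = 1$ in $A_s$ (which makes sense since $G^N = 1$). For the generator $x$, applying $\chi$ to $1 \ot X$ yields $\zero{X} \ot \one{X} = 1 \ot x + X \ot g$, while applying $\chi$ to $X G^{-1} \ot G$ yields $X G^{-1} G \ot g = X \ot g$; subtracting gives $\chi(1 \ot X - X G^{-1} \ot G) = 1 \ot x$, exactly as required.

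By the bijectivity of $\chi$ these two computations identify $\tau(g)$ and $\tau(x)$ uniquely with the stated expressions, which completes the verification. I do not expect a genuine obstacle here: the only points demanding care are feeding $\delta^A$ into the correct (second) tensor slot and invoking the algebra relations of $A_s$. One could alternatively derive the formulas constructively, writing an ansatz for $\tau(h)$ in the basis $\{X^i G^j\}$ of $A_s$ and solving $\chi(\tau(h)) = 1_A \ot h$ slot by slot; but since the target values are already supplied by the statement, the direct check above via $\chi$ is the most economical route.
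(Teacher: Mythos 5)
Your proposal is correct and follows essentially the same route as the paper: apply the canonical map $\chi$ to the candidate expressions, use $\delta^A(G)=G\ot g$ and $\delta^A(X)=1\ot x+X\ot g$ on the second tensor slot, and conclude by bijectivity of $\chi$ that they equal $\tau(g)$ and $\tau(x)$. Your write-up merely makes explicit the uniqueness argument that the paper's one-line proof leaves implicit.
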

\noindent
We have then the following:
\begin{prop}\label{bitaft}
For any complex number $s$ there is a Hopf algebra isomorphism
$$
\Phi : \C(A_s, T_N) \simeq T_N .
$$
\end{prop}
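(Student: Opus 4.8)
The plan is to build the isomorphism by hand, since $T_N$ is neither commutative nor cocommutative and so Lemma~\ref{Go} does not apply. The idea is to exhibit two distinguished elements of $\C(A_s, T_N) = (A_s\ot A_s)^{co\,T_N}$ that play the roles of the Taft generators $g$ and $x$, and then define a Hopf algebra map $\Psi := \Phi^{-1}: T_N \to \C(A_s, T_N)$ sending the generators to them. Guided by the translation map of Lemma~\ref{trTaft}, I would take $\gamma := G\ot G^{-1}$ (which one recognizes as $\tau(g^{-1})$) and $\xi := X\ot G^{-1} - 1\ot XG^{-1}$. A direct check of the coinvariance condition \eqref{ec2c} confirms $\gamma, \xi \in \C(A_s, T_N)$; note that $\tau(x)$ by itself is \emph{not} coinvariant (this is exactly the place where cocommutativity would have entered in Lemma~\ref{Go}), so the correction term $-1\ot XG^{-1}$ is essential.

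First I would verify that $\Psi$ is a well-defined algebra map, i.e.\ that $\gamma$ and $\xi$ satisfy the three Taft relations. The relation $\gamma^{\bullet N} = G^N\ot G^{-N} = 1_\C$ is immediate from $G^N = 1$, and a two-line computation using $G^{-1}X = qXG^{-1}$ in $A_s$ gives $\xi\bullet\gamma = q\,\gamma\bullet\xi$, matching $xg = qgx$. The nilpotency $\xi^{\bullet N}=0$ is the heart of the matter and the step I expect to be the main obstacle. Here I would first prove by induction the $q$-binomial formula
\[
\xi^{\bullet n} = \sum_{k=0}^{n} (-1)^k\, q^{\binom{k}{2}} \binom{n}{k}_q \, X^{n-k}\ot X^k G^{-n},
\]
and then specialise to $n = N$. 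Since $q$ is a primitive $N$-th root of unity, $\binom{N}{k}_q = 0$ for $0<k<N$, so only the terms $k=0$ and $k=N$ survive; using $X^N = s$ and $G^{-N}=1$ these equal $s\,1_\C$ and $(-1)^N q^{\binom{N}{2}} s\, 1_\C$, and the elementary identity $(-1)^N q^{\binom{N}{2}} = -1$ (checked separately for $N$ even and odd) makes them cancel. Crucially this holds for \emph{every} $s$, which is the algebraic shadow of the fact that all $T_N$-Galois objects give the same bialgebroid.

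Next I would check that $\Psi$ is a coalgebra map, using the coproduct $\Delta_\C(a\ot\tilde a) = \zero{a}\ot\tau(\one{a})\ot\tilde a$ together with the explicit $\tau$. A short computation gives $\Delta_\C(\gamma) = \gamma\ot\gamma$ and $\epsilon_\C(\gamma)=1$, so $\gamma$ is group-like, and $\Delta_\C(\xi) = 1_\C\ot\xi + \xi\ot\gamma$ with $\epsilon_\C(\xi) = 0$, so $\xi$ is $(1_\C,\gamma)$-skew-primitive. These match $\Delta(g)=g\ot g$ and $\Delta(x)=1\ot x + x\ot g$ exactly. Since $\Psi$ is an algebra map and both coproducts are algebra maps, it is then enough to have the intertwining on the generators $g,x$ to conclude that $\Psi$ is a bialgebra map, hence a Hopf algebra map (a bialgebra map between Hopf algebras automatically intertwines the antipodes).

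Finally I would conclude that $\Psi$ is bijective by a dimension count. The isomorphism \eqref{baiso-b} gives $A_s\ot A_s \cong \C(A_s, T_N)\ot A_s$, whence $\dim \C(A_s, T_N) = (\dim A_s)^2/\dim A_s = N^2 = \dim T_N$. It then suffices to show $\Psi$ is injective, which I would obtain by checking that the images $\{\xi^{\bullet i}\bullet\gamma^{\bullet j}\}_{0\le i,j\le N-1}$ are linearly independent in $A_s\ot A_s$ (equivalently, that $\gamma$ and $\xi$ generate $\C(A_s,T_N)$ as an algebra). Setting $\Phi := \Psi^{-1}$ then yields the desired Hopf algebra isomorphism $\C(A_s, T_N)\simeq T_N$.
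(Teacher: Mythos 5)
Your proposal follows essentially the same route as the paper's proof: the same two coinvariant elements $\Gamma=G\ot G^{-1}$ and $\Xi=X\ot G^{-1}-1\ot XG^{-1}$, the verification of the Taft relations with the vanishing of the Gaussian binomials $\binom{N}{k}_q$ for $0<k<N$ as the crux, and the same computation showing $\Gamma$ is group-like and $\Xi$ is $(1,\Gamma)$-skew-primitive. Two of your refinements improve on the printed argument and are worth keeping. First, your cancellation argument for $\Xi^{\bullet N}=0$ is the correct one: the surviving $k=0$ and $k=N$ terms are $s\,1_{\C}$ and $(-1)^N q^{\binom{N}{2}} s\,1_{\C}=-s\,1_{\C}$, and these cancel for every $s$; the paper's closing phrase that the expression ``vanishes from $X^N=0$'' is a slip, since $X^N=s$ in $A_s$, and the $s$-independence of the cancellation is precisely the point. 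Second, by defining the map in the direction $T_N\to\C(A_s,T_N)$ you only need the relations to hold among $\gamma,\xi$ for well-definedness, and you settle bijectivity by the dimension count $\dim\C(A_s,T_N)=N^2$ from \eqref{baiso-b} together with injectivity; the paper instead asserts without argument that $\Xi,\Gamma$ generate $\C(A_s,T_N)$. Your version is the more complete one, modulo the one remaining check you flag yourself: the linear independence of the $N^2$ elements $\xi^{\bullet i}\bullet\gamma^{\bullet j}$ in $A_s\ot A_s$, which follows by inspecting their leading terms $X^iG^j\ot(\cdots)$ in the PBW-type basis $\{X^aG^b\}$ of $A_s$.
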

\begin{proof}
It is easy to see that the elements 
\beq
\Xi = X \ot G^{-1} - 1 \ot X G^{-1}  , \qquad  \Gamma = G \ot G^{-1}
\eeq
are coinvariants for the right diagonal coaction of $T_N$ on $A_s\ot A_s$ and that they generate 
$\C(A_s, T_N)=(A_s\ot A_s)^{co \, T_N}$ as an algebra. These elements satisfy the relations:
\begin{align}
\Xi^N = 0, \quad \Gamma^N = 1, \quad \Xi \bullet_{\C} \Gamma = q \, \Xi \bullet_{\C} \Gamma  \, .
\end{align}
Indeed, the last two relations are easy to see. As for the first one, shifting powers of $G^{-1}$ to the right one finds
\begin{align*}
\Xi^N & = X^N \ot G^{-N} + \sum_{r=1}^{N-1} c_r \, X^{N-r} \ot X^{r} G^{-N} + (-1)^N \ot (X G^{-1})^{N} \\
& = \big[ X^N \ot 1 + \sum_{r=1}^{N-1} c_r \, X^{N-r} \ot X^{r} + (-1)^N q^{\frac{n(n-1)}{2}}\ot X^{N} \big] \, G^{-N}
\end{align*}
for explicit coefficients $c_r$ depending on $q$. Then, using the same methods as in \cite{Taft} one shows that, 
being $q$ a primitive $N$-th roots of unity, all coefficients $c_r$ vanish and so
$\Xi^N = X^N \ot G^{-N} + (-1)^N \ot (X G^{-1})^{N}$ which then vanishes from $X^N=0$.

Thus the elements $\Xi$ and $\Gamma$ generate a copy of the algebra $T_N$ and the isomorphism 
$\Phi$ maps $\Xi$ to $x$ and $\Gamma$ to $g$. 
The map $\Phi$ is also a coalgebra map. Indeed,
$$
\Delta(\Phi(\Gamma)) = \Delta(g) =g \ot g,
$$
while, using Lemma \ref{trTaft},
$$
\Delta_\C(\Gamma) = \zero{G} \ot \tuno{\one{G}} \ot \tdue{\one{G}} \ot G^{-1} = G \ot G^{-1} \ot G \ot G^{-1} 
= \Gamma \ot \Gamma.
$$
Thus $(\Phi \ot \Phi)(\Delta_\C(\Gamma)) = g \ot g = \Delta(\Phi(\Gamma))$. Similarly, 
$$
\Delta(\Phi(\Xi)) = \Delta(x) = 1 \ot x + x \ot g,
$$
while, using Lemma \ref{trTaft} in the third step,
\begin{align*}
\Delta_\C(\Xi) & = \Delta_\C(X \ot G^{-1}) - \Delta_\C(1 \ot X G^{-1}) \\
& = \zero{X} \ot \tuno{\one{X}} \ot \tdue{\one{X}} \ot G^{-1} - 1 \ot 1 \ot 1 \ot X G^{-1} \\
& = 1 \ot \tuno{x} \ot \tdue{x} \ot G^{-1} + X \ot \tuno{g} \ot \tdue{g} \ot G^{-1}
- 1 \ot 1 \ot 1 \ot X G^{-1} \\
& = 1 \ot \Big(1 \ot X - X G^{-1} \ot G \Big) \ot G^{-1} + X \ot G^{-1} \ot G \ot G^{-1} - 1 \ot 1 \ot 1 \ot X G^{-1} \\
& = 1 \ot 1 \ot \Big( X \ot G^{-1} - 1 \ot X G^{-1} \Big) + \Big( X \ot G^{-1}  - 1 \ot X G^{-1} \Big) \ot G \ot G^{-1} \\
& = 1\ot \Xi + \Xi \ot \Gamma .
\end{align*}
  Thus $(\Phi \ot \Phi)(\Delta_\C(\Xi)) = 1 \ot x + x \ot g = \Delta(\Phi(\Xi))$. 
Finally: $\epsilon_\C(\Gamma) = 1 = \epsilon(g)$ and  $\epsilon_\C(\Xi) = 0 = \epsilon(x)$. 
This concludes the proof.
\end{proof}
The group of characters of the Taft algebra $T_N$ is the cyclic group $\IZ_N$: indeed any character $\phi$ must be such that  $\phi(x)=0$, while $\phi(g)^N=\phi(g^N)=\phi(1)=1$. Then for the group of gauge transformations of the Galois object $A_s$, the same as the group of bisections of the bialgebroid $\C(A_s, T_N)$, due to Proposition \ref{bitaft} we have,
\beq
\Aut_{T_N}(A_s) \simeq \B(\C(A_s, T_N)) = \mbox{Char}(T_N) = \IZ_N. 
\eeq

On the other hand, elements $F$ of $\Aut^{ext}_{T_N}(A_s) \simeq \B^{ext}(\C(A_s, T_N)$, due to equivariance 
$\zero{F(a)}\ot\one{F(a)}=F(\zero{a})\ot \one{a}$ for any $a \in A_{s}$, can be given as a block diagonal matrix 
$$
F = \diag(M_1, M_2, \dots, M_{N-1}, M_N)
$$
with each $M_j$ a $N \times N$ invertible lower triangular matrix

$$
M_j = \begin{bmatrix}
1  & 0    & 0      &  \dots      & 0   & 0 \\
b_{21}  & a_{N-1} & 0 & \dots &   0  & 0 \\
b_{31}  & b_{32} & a_{N-2} & \ddots & \ddots &  \vdots  \\
  \vdots & \ddots & \ddots & \ddots & 0 & 0  \\
b_{N-1, 1} &  b_{N-1, 2}      & \ddots & \ddots & a_2 & 0 \\
b_{N 1} & b_{N 2}&  \dots      & b_{N, N-2}  & b_{N, N-1}      & a_1
\end{bmatrix}
$$

\noindent
All matrices $M_j$ have in common the diagonal elements $a_j$ (ciclic permuted) which are all different from zero for the invertibility of $M_j$. For the subgroup $\Aut_{T_N}(A_s)$ the $M_j$ are diagonal as well with 
$a_k=(a_1)^k$ and $(a_1)^N=1$ so that $M_j \in\IZ_N$. 
The reason all $M_j$ share the same diagonal elements (up to permutation) is the following: firstly, the `diagonal' form of the coaction of $G$ in \eqref{coTaft} imply that 
the image $F(G^k)$ is proportional to $G^k$, say $F(G^k)=\alpha_k G^k$ for some  constant $\alpha_k$. 
Then, do to the first term in the coaction of $X$ in \eqref{coTaft}, the `diagonal' component along the basis element $X^l G^k$ of the image $F(X^l G^k)$ is given again by $\alpha_k$ for any possible value of the index $l$.

Let us illustrate the construction for the cases of $N=2, 3$. Firstly, $F(1)=1$ since $F$ is unital. 
When $N=2$, on the basis $\{1, X, G, XG\}$, the equivariance $\zero{F(a)}\ot\one{F(a)}=F(\zero{a})\ot \one{a}$ for the coaction \eqref{coTaft} becomes
\begin{align*}
\zero{F(X)}\ot\one{F(X)}&=1 \ot x+F(X)\ot g,\\
\zero{F(G)}\ot \one{F(G)}&=F(G)\ot g,\\
\zero{F(XG)}\ot\one{F(XG)}&=F(G)\ot xg+F(XG)\ot 1.
\end{align*} 
Next, write $F(a)=f_{1}(a) + f_{2}(a)\, X+f_{3}(a)\, G + f_{4}(a)\, XG$, for complex numbers $f_{k}(a)$. 
And compute $\zero{F(a)}\ot \one{F(a)}=f_{1}(a)\, 1 \ot 1+ f_{2}(a)\, (1\ot x+ X\ot g)+f_{3}(a)\, G\ot g
+f_{4}(a)\, (G\ot xg+XG\ot 1)$. Then comparing generators, the equivariance gives
\begin{align*}
& f_1(X) = f_4(X) = 0 \\
& f_1(G) = f_2(G) = f_4(G)= 0 \\
& f_2(XG) = f_3(XG) = 0 ,
\end{align*}
while the remaining coefficients are related by the system of equations
\begin{align*}
f_{2}(X)\, (1\ot x+ X\ot g)+f_{3}(X)\, G\ot g &=1 \ot x+F(X)\ot g,\\
f_{3}(G)\, G\ot g &=F(G)\ot g,\\
f_{1}(XG)\, 1 \ot 1 + f_{4}(XG)\, (G\ot xg+XG\ot 1) &=F(G)\ot xg+F(XG)\ot 1.
\end{align*} 
One readily finds solutions
\begin{align*}
& f_2(X) = 1 , \qquad f_3(X) = \gamma , \qquad f_1(XG) = \beta \\
& f_3(G) = f_4(XG) = \alpha 
\end{align*}
with $\alpha, \beta, \gamma$ arbitrary complex numbers. Thus a generic element $F$ of $\Aut^{ext}_{T_2}(A_s)$
can be represented by the matrix:
\begin{align}\label{autverT}
F : 
\begin{pmatrix}
1 \\ X G\\ G \\ X \\
\end{pmatrix} 
\quad \mapsto \quad
 \begin{pmatrix}
1 & 0 & 0 & 0\\
 \beta & \alpha & 0 & 0\\
0 & 0 & \alpha & 0\\
0 & 0 & \gamma & 1\\ 
\end{pmatrix} 
\begin{pmatrix}
1 \\ X G\\ G \\ X \\
\end{pmatrix} .
\end{align}
Asking $F$ to be invertible requires $\alpha \neq 0$.
On the other hand, any $F\in\Aut_{T_{2}}(A_{s})$ is an algebra map 
and so is determined by its values on the generators $G, X$.
From $F(G)= \alpha G$ and $F(X)= \gamma G + X$: 
requiring $s=F(X^2) = (\gamma G + X)^2 = \gamma + (GX + XG) + s$ yields $\gamma=0$; 
then $\beta + \alpha X G = F(XG) = \alpha XG$ yields $\beta=0$; and $1 = F(G^2) = (\alpha G)^2$ leads to 
$\alpha^2=1$. Thus $F(X)=X$ and $F(G)= \alpha G$, with $\alpha^2=1$ and we conclude that 
$\Aut_{T_{2}}(A_{s})\simeq\mathbb{Z}_{2}$.

When $N=3$, a similar, if longer computation, gives for $\Aut^{exp}_{T_{3}}(A_{s})$ an eight parameter group with its elements $F$ of the form   
 \begin{align}
F :
\begin{pmatrix}
1 \\ X G^{2} \\ X^{2} G \\ G^{2} \\ X G \\ X^{2} \\ G \\ X \\ X^{2} G^{-1}\\
\end{pmatrix} 
\mapsto
 \begin{pmatrix}
1 & 0 & 0 & 0 & 0 & 0 & 0 & 0 & 0\\
\beta & \alpha_2 & 0 & 0 & 0 & 0 & 0 & 0 & 0\\
\eta & - q \delta & \alpha_1 & 0 & 0 & 0 & 0 & 0 & 0\\
0 & 0 & 0 & \alpha_2 & 0 & 0 & 0 & 0 & 0\\
0 & 0 & 0 & \delta & \alpha_1 & 0 & 0 & 0 & 0\\
0 & 0 & 0 & \lambda & - q \gamma & 1 & 0 & 0 & 0\\
0 & 0 & 0 & 0 & 0 & 0 & \alpha_1 & 0 & 0\\
0 & 0 & 0 & 0 & 0 & 0 & \gamma & 1 & 0\\
0 & 0 & 0 & 0 & 0 & 0 & \theta &- q\beta & \alpha_2 \\
\end{pmatrix}  
\begin{pmatrix}
1 \\ X G^{2} \\ X^{2} G \\ G^{2} \\ X G \\ X^{2} \\ G \\ X \\ X^{2} G^{-1}\\
\end{pmatrix} .
\end{align}
One needs $\alpha_j \neq 0$, $j=1,2$ for invertibility. 
By going as before, for any $F\in\Aut_{T_{N}}(A_{s})$ one starts from it values on the generators, 
$F(G)= \alpha_1 G$ and $F(X)= \gamma G + X$, to conclude that $F$ is a diagonal matrix (in particular $F(X)=X$)
with $\alpha_2=(\alpha_1)^2$ and $1=(\alpha_1)^3$; thus $\Aut_{T_{3}}(A_{s})\simeq\mathbb{Z}_{3}$. 

\section{Crossed module structures on bialgebroids} 
Isomorphisms of (a usual) groupoid with natural transformations between them form a strict 2-groupoid. In particular, 
automorphisms of the groupoid  with its natural transformations, form a strict 2-group or, equivalently, a crossed module (cf. \cite{ralf-chen}, Definition 3.21). The crossed module combines automorphisms of the groupoid and bisections since the latter are the natural transformations from the identity functor to  automorphisms. The crossed module involves the product on bisections and the composition on automorphisms, and the group homomorphism from bisections to automorphisms together with the action of automorphisms on bisections by conjugation. Any bisection $\sigma$ is the 2-arrow from the identity morphism to an automorphism $Ad_\sigma$, and the composition of bisections can be viewed as the horizontal composition of 2-arrows. 

In this section we quantise this construction for the Ehresmann--Schauenburg bialgebroid of a Hopf--Galois extension. We construct a crossed module for the bisections and the automorphisms of the bialgebroid. Notice that we do not  need the antipode of bialgebroid, that is we do not need to defined the crossed module on Hopf algebroid and the crossed module on bialgebroid is a generalization of the crossed module on groupoid. In the next section, 
The construction can also be repeated for extended bisections.

\subsection{Automorphisms and crossed modules} \label{cmhg}
Recall that a crossed module is the data $(M, N, \mu, \alpha)$ of two groups $M$, $N$ together with 
a group morphism $\mu: M\to N$ and a group morphism $\alpha : N \to \Aut(M)$ such that, 
denoting $\alpha_{n} : M\to M$ for every $n\in N$, the following conditions are satisfied:
\begin{itemize}
    \item[(1)] $\mu(\alpha_{n}(m))=n\mu(m) n^{-1}$, \quad for any $n\in N$ and $m\in M$;
    ~\\
    \item[(2)] $\alpha_{\mu(m)}(m')=mm'm^{-1}$, \quad for any $m, m'\in M$.
\end{itemize}

Then, with the definition of the automorphism group of a bialgebroid as given in Definition \ref{amoeba},
we aim at proving the following.

\begin{thm}\label{thm. crossed module1}
Given a Hopf--Galois extension $B=A^{coH}\subseteq A$, let $\mathcal{C}(A, H)$ be the corresponding left Ehresmann--Schauenburg bialgebroid, and assume $B$ is in the centre of $A$. Then there is a group morphism $Ad: \B(\C(A, H))\to\Aut(\C(A, H))$ and an action $\triangleright$ of $\Aut(\C(A, H))$ on $\B(\C(A, H))$ that give a crossed module structure to 
$\big( \B(\C(A, H)), \Aut(\C(A, H)) \big)$. 
\end{thm}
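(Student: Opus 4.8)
The plan is to construct explicitly the two group homomorphisms required by the crossed module axioms and then verify conditions (1) and (2). First I would define the map $Ad : \B(\C(A, H)) \to \Aut(\C(A, H))$. Given a bisection $\sigma$, a natural candidate for the associated bialgebroid automorphism is conjugation in the convolution algebra, namely $Ad_\sigma := \sigma^{-1} \ast \id_{\C} \ast \sigma$ acting on $\C(A, H)$, paired with the base-algebra automorphism $\varphi_\sigma := \sigma \circ s \in \Aut(B)$. Concretely, using the coproduct \eqref{copro}, this reads
\begin{align*}
Ad_\sigma(a \ot \tilde{a}) = (\sigma^{-1} \circ s)\big( \sigma(\cdots) \big) \, \cdots \, ,
\end{align*}
obtained by applying $\sigma^{-1}$ and $\sigma$ to the outer legs of $\Delta_\C^{(2)}(a \ot \tilde{a})$ and leaving the middle leg as the image. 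I would check that $Ad_\sigma$ is an algebra automorphism of $\C(A, H)$ intertwining source, target, coproduct and counit as in Definition \ref{amoeba}, with $\varphi = \varphi_\sigma$; the compatibility $\Phi \circ s = s \circ \varphi$ and $\epsilon \circ \Phi = \varphi \circ \epsilon$ should follow directly from the bisection axioms $\sigma \circ t = \id_B$ and $\sigma \circ s \in \Aut(B)$, while $(\Phi \ot_B \Phi) \circ \Delta = \Delta \circ \Phi$ follows from coassociativity together with $\sigma \ast \sigma^{-1} = \epsilon$.

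Next I would define the action $\triangleright$ of $\Aut(\C(A, H))$ on $\B(\C(A, H))$. For a bialgebroid automorphism $(\Phi, \varphi)$ and a bisection $\sigma$, I would set $(\Phi \triangleright \sigma) := \varphi \circ \sigma \circ \Phi^{-1}$, and verify that this is again a bisection: it is manifestly a unital algebra map $\C(A, H) \to B$, and the conditions $(\Phi \triangleright \sigma) \circ t = \id_B$ and $(\Phi \triangleright \sigma) \circ s \in \Aut(B)$ should follow from axioms (i), (ii) and (iv) of Definition \ref{amoeba} together with the corresponding properties of $\sigma$. I would then check that $\triangleright$ is a genuine left action, i.e. that $\Phi \mapsto (\sigma \mapsto \Phi \triangleright \sigma)$ is a group morphism $\Aut(\C(A, H)) \to \Aut(\B(\C(A, H)))$, which amounts to checking $(\Phi_1 \circ \Phi_2) \triangleright \sigma = \Phi_1 \triangleright (\Phi_2 \triangleright \sigma)$ and that each $\Phi \triangleright (\cdot)$ respects the convolution product \eqref{mulbis1}; the latter uses that $\Phi$ is a coring map so that it commutes with $\Delta_\C$.

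The two crossed-module axioms are then the heart of the argument. For axiom (2), $Ad_{\mu(\sigma)}(\sigma') = \sigma \ast \sigma' \ast \sigma^{-1}$ where I write $\mu = Ad$; this should reduce to an identity purely about the convolution algebra structure, since $Ad_\sigma$ is conjugation and the composition on the automorphism side restricts, under $Ad$, to the convolution product on bisections. For axiom (1), $Ad_{\Phi \triangleright \sigma} = \Phi \circ Ad_\sigma \circ \Phi^{-1}$ as bialgebroid automorphisms; this I expect to verify by unwinding both sides on a generic element $a \ot \tilde{a}$, repeatedly using that $\Phi$ intertwines the coproduct and that $\varphi$ intertwines $\epsilon$ and $s$.

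The main obstacle I anticipate is the careful bookkeeping forced by the bialgebroid (as opposed to Hopf-algebra) setting: the convolution product \eqref{mulbis1} is not the naive one but involves the twist by $\sigma_2 \circ s$, so $Ad_\sigma$ is genuinely a groupoid-flavoured conjugation rather than an ordinary one, and I must confirm that $Ad$ is a group \emph{homomorphism}, i.e. $Ad_{\sigma_1 \ast \sigma_2} = Ad_{\sigma_1} \cdot Ad_{\sigma_2}$ (composition in $\Aut(\C)$), which requires matching the twisted convolution against the composition of the two conjugations. Checking that $Ad_\sigma$ genuinely lands in $\Aut(\C(A, H))$, in particular that its base map is $\varphi_\sigma = \sigma \circ s$ and that it is compatible with the balanced tensor product appearing in axiom (iii) of Definition \ref{amoeba} (which, as noted in Remark \ref{newbimodule}, is the one induced by the twisted source and target $s' = s \circ \varphi$, $t' = t \circ \varphi$), is where the subtlety of the $B$-bimodule structure will have to be handled most carefully; here the hypothesis that $B$ lies in the centre of $A$, which already underlies Proposition \ref{prop:gsob}, will be used to ensure all the relevant maps are algebra maps.
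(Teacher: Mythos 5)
Your plan is correct in outline and lands on the same crossed module, but it takes a genuinely different route from the paper at the one point that carries most of the weight. The paper does not define $Ad_\sigma$ by convolution conjugation: Lemma \ref{ajaut} sets $Ad_\sigma(a\ot\tilde a):=F_\sigma(a)\ot F_\sigma(\tilde a)$, where $F_\sigma\in\Aut_H(A)$ is the gauge transformation attached to $\sigma$ by the isomorphism of Proposition \ref{atob}. This makes the two verifications you single out as your main obstacles essentially free: $Ad_\sigma$ is an algebra automorphism because $F_\sigma$ is, and $Ad$ is a group morphism because $\sigma\mapsto F_\sigma$ is; the only nontrivial check left is axiom (iii) of Definition \ref{amoeba}, which the paper reduces to the identity $F(\tuno{h})\ot_B F(\tdue{h})=\tuno{h}\ot_B\tdue{h}$ for $F\in\Aut_H(A)$. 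The convolution-conjugation formula is only derived afterwards, in \eqref{altad}, and is used to prove the two crossed-module axioms (Lemmas \ref{d2} and \ref{d3}). Your route of starting directly from conjugation in the convolution algebra is viable --- the paper itself runs exactly that version of the argument for extended bisections in Theorem \ref{thm. crossed module2}, where the algebra-map shortcut is unavailable --- but it obliges you to do by hand the computations that the $F_\sigma\ot F_\sigma$ description gives for free, so you should either import Proposition \ref{atob} or budget for those computations.

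Two concrete repairs are needed before your plan closes. First, $\sigma^{-1}\ast\id_{\C}\ast\sigma$ is not literally well defined over the balanced tensor products: the correct outer insertion is twisted, namely $Ad_\sigma(c)=\sigma(\one{c})\,\two{c}\,(\sigma\circ s)\circ\sigma^{-1}(\three{c})$ as in \eqref{altad} (up to your choice of which of $\sigma,\sigma^{-1}$ sits on which leg); the extra $(\sigma\circ s)$ only disappears for vertical bisections or for Galois objects. You flag the existence of this twist but your displayed formula does not incorporate it. Second, your conventions ($\sigma^{-1}$ on the left leg, $\Phi\triangleright\sigma=\varphi\circ\sigma\circ\Phi^{-1}$) are the mirror image of the paper's ($\sigma$ on the left leg, $\Phi\triangleright\sigma=\varphi^{-1}\circ\sigma\circ\Phi$); they are internally consistent, and indeed make $Ad$ a homomorphism and $\triangleright$ a left action for plain composition, but you must then prove axiom (1) in the form $Ad_{\Phi\triangleright\sigma}=\Phi\circ Ad_\sigma\circ\Phi^{-1}$ rather than the paper's $\Phi^{-1}\circ Ad_\sigma\circ\Phi$, and check that axiom (2) comes out as $Ad_\sigma\triangleright\sigma'=\sigma\ast\sigma'\ast\sigma^{-1}$ with the order of factors matching your conventions. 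Neither point is a fatal gap, but both must be fixed for the bookkeeping to close.
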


We give the proof in a few lemmas.
\begin{lem}\label{ajaut}
Given a Hopf--Galois extension $B=A^{coH}\subseteq A$, let $\mathcal{C}(A, H)$ be the corresponding left Ehresmann--Schauenburg bialgebroid. Assume $B$ belongs to the centre of $A$. 
For any bisection $\sigma\in \B(\C(A, H))$, denote $ad_\sigma = \sigma \circ s \in \Aut(B)$ and let
$F_{\sigma}$ be the associated gauge element in $\Aut_H(A)$ (see \eqref{btog}). 
Define 
$Ad_\sigma: \C(A, H)\to \C(A, H)$ by
\begin{align}
Ad_\sigma(a\ot\tilde{a}) & :=F_{\sigma}(a)\ot F_{\sigma}(\tilde{a}) \nn \\ & \:= 
\sigma(\zero{a}\ot\tuno{\one{a}})\tdue{\one{a}}\ot\sigma(\zero{\tilde{a}}\ot\tuno{\one{\tilde{a}}})\tdue{\one{\tilde{a}}}\, . \label{Ad}
\end{align}
Then the pair $(Ad_\sigma, ad_\sigma)$ is an automorphism of $\C(A, H)$.
\end{lem}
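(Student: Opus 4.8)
The plan is to verify directly the four conditions of Definition~\ref{amoeba} for the pair $(Ad_\sigma, ad_\sigma)$, taking $\Phi = Ad_\sigma$ and $\varphi = ad_\sigma = \sigma\circ s$. Since $B$ is central, Proposition~\ref{atob} supplies the gauge element $F_{\sigma}\in\Aut_H(A)$ of \eqref{btog}: it is a unital $H$-comodule algebra automorphism of $A$ restricting on $B$ to $ad_\sigma = \sigma\circ s\in\Aut(B)$, and by definition $Ad_\sigma = F_\sigma\ot F_\sigma$. First I would check that $Ad_\sigma$ is a well-defined algebra automorphism of $\C(A,H)$. Well-definedness amounts to $F_\sigma(a)\ot F_\sigma(\tilde{a})\in(A\ot A)^{coH}$, and this follows from the $H$-equivariance $\delta^A\circ F_\sigma = (F_\sigma\ot\id)\circ\delta^A$, which turns the coinvariance condition \eqref{ec2} for $a\ot\tilde{a}$ into the same condition for $F_\sigma(a)\ot F_\sigma(\tilde{a})$. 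Because the product \eqref{pro} is componentwise (inside $A\ot A^{op}$) and $F_\sigma$ is a unital algebra map, $Ad_\sigma$ is a unital algebra map; it is invertible with inverse obtained by applying $F_\sigma^{-1}\in\Aut_H(A)$ in each leg, hence an algebra automorphism.

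Conditions (i), (ii) and (iv) are then immediate and I would dispatch them together. Using $s(b)=b\ot 1$, $t(b)=1\ot b$ from \eqref{sourcetarget} together with $F_\sigma(1)=1$ and $F_\sigma|_B = ad_\sigma$, I get $Ad_\sigma(s(b)) = ad_\sigma(b)\ot 1 = s(ad_\sigma(b))$ and $Ad_\sigma(t(b)) = 1\ot ad_\sigma(b) = t(ad_\sigma(b))$, which are (i) and (ii). For (iv) the counit \eqref{counit} gives $\epsilon(Ad_\sigma(a\ot\tilde{a})) = F_\sigma(a)F_\sigma(\tilde{a}) = F_\sigma(a\tilde{a})$; since $a\tilde{a} = \epsilon(a\ot\tilde{a})\in B$ and $F_\sigma|_B = ad_\sigma$, this equals $ad_\sigma(\epsilon(a\ot\tilde{a}))$, i.e.\ $\epsilon\circ Ad_\sigma = ad_\sigma\circ\epsilon$.

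The hard part will be condition (iii), $(Ad_\sigma\ot_B Ad_\sigma)\circ\Delta = \Delta\circ Ad_\sigma$. Here I would first isolate the auxiliary identity that $F_\sigma$ preserves the translation map,
\[
F_\sigma(\tuno{h})\ot_B F_\sigma(\tdue{h}) = \tuno{h}\ot_B\tdue{h}, \qquad h\in H,
\]
which I would prove by applying the bijective canonical map $\chi$: using $H$-equivariance of $F_\sigma$ and then \eqref{p7}, one computes $\chi\big(F_\sigma(\tuno{h})\ot_B F_\sigma(\tdue{h})\big) = F_\sigma\big(\tuno{h}\zero{\tdue{h}}\big)\ot\one{\tdue{h}} = 1_A\ot h = \chi(\tuno{h}\ot_B\tdue{h})$, so injectivity of $\chi$ gives the claim. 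With this in hand I compute both sides of (iii) from the coproduct \eqref{copro}. On one side, the $H$-equivariance $\delta^A(F_\sigma(a)) = F_\sigma(\zero{a})\ot\one{a}$ turns $\Delta(Ad_\sigma(a\ot\tilde{a}))$ into $F_\sigma(\zero{a})\ot\tuno{\one{a}}\ot_B\tdue{\one{a}}\ot F_\sigma(\tilde{a})$; on the other, $(Ad_\sigma\ot_B Ad_\sigma)\Delta(a\ot\tilde{a})$ equals $F_\sigma(\zero{a})\ot F_\sigma(\tuno{\one{a}})\ot_B F_\sigma(\tdue{\one{a}})\ot F_\sigma(\tilde{a})$. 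The two outer legs match, and the auxiliary identity applied to the inner pair $\tuno{\one{a}}\ot_B\tdue{\one{a}}$ identifies the middle, proving (iii). Since $ad_\sigma\in\Aut(B)$ by the definition of a bisection, all requirements of Definition~\ref{amoeba} are met and $(Ad_\sigma, ad_\sigma)$ is a bialgebroid automorphism.
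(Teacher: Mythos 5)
Your proposal is correct and follows essentially the same route as the paper's proof: conditions (i), (ii), (iv) are dispatched by direct computation using $F_\sigma(1)=1$ and $F_\sigma|_B = \sigma\circ s$, and condition (iii) is reduced to the same key auxiliary identity $F_\sigma(\tuno{h})\ot_B F_\sigma(\tdue{h}) = \tuno{h}\ot_B\tdue{h}$, proved in the same way by applying the bijective canonical map $\chi$ together with $H$-equivariance and \eqref{p7}. The only addition is your explicit check that $Ad_\sigma$ lands in $(A\ot A)^{coH}$, which the paper leaves implicit; this is a harmless and indeed welcome extra detail.
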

\begin{proof}
Since $F_{\sigma}$ is an algebra automorphism, so is $Ad_\sigma$. Then, for any $b\in B$,
\begin{align*}
Ad_\sigma(t(b)) = Ad_\sigma(1\ot b) = 1\ot\sigma(b\ot 1) = t(ad_\sigma(b)) 
\end{align*}
and
\begin{align*}
Ad_\sigma(s(b)) = Ad_\sigma(b\ot 1) = \sigma(b\ot 1)\ot1 = s(ad_\sigma(b)) .
\end{align*}
So conditions (i) and (ii) of Definition \ref{amoeba} are satisfied. 
For condition (iii), using $H$-equivariance of $F_{\sigma}$, with $a\ot\tilde{a}\in \C(A, H)$ we get 
\beq\label{ad1}
(\Delta_{\C(A, H)}\circ Ad_\sigma) (a\ot\tilde{a}) 
= F_{\sigma}(\zero{a})\ot \tuno{\one{a}}\ot_{B}\tdue{\one{a}}\ot F_{\sigma}(\tilde{a}). 
\eeq
On the other hand, 
\beq\label{ad2}
\big((Ad_\sigma\ot_{B} Ad_\sigma)\circ \Delta_{\C(A, H)} \big) (a\ot\tilde{a}) = F_{\sigma}(\zero{a})\ot F_{\sigma}(\tuno{\one{a}})\ot_{B}F_{\sigma}(\tdue{\one{a}})\ot F_{\sigma}(\tilde{a}).
\eeq
Now, for any $F \in \Aut_H(A)$,  given $h \in H$, one has 
\begin{align}\label{idtens}
 F(\tuno{h}) & \ot_{B}F(\tdue{h}) = \tuno{h}\ot_{B}\tdue{h} , \qquad \mbox{for any} \quad h \in H .
\end{align}
By applying the canonical map $\chi$ and using equivariance of $F$ we compute,
\begin{align*}
\chi( F(\tuno{h}) \ot_{B}F(\tdue{h})) & = F(\tuno{h}) \zero{F(\tdue{h})} \ot \one{F(\tdue{h})} \\
& = F(\tuno{h}) F(\zero{\tdue{h}}) \ot \one{\tdue{h}} \\
& = F(1_A) \ot h = 1_A \ot h
\end{align*}
using \eqref{p7}. Being $\chi$ an isomorphism we get the relation \eqref{idtens}. Using this for 
the right hand sides of \eqref{ad1} and \eqref{ad2} shows that they coincide and condition (iii) is satisfied. 
Finally, 
\begin{align*}
(\epsilon\circ Ad_\sigma) (a\ot\tilde{a}) = \epsilon(F_{\sigma}(a)\ot F_{\sigma}(\tilde{a}))=F_{\sigma}(a\tilde{a})=(\sigma \circ s)(a\tilde{a})=ad_\sigma\circ \epsilon(a\ot\tilde{a}).
\end{align*}
This finishes the proof.
\end{proof}

\begin{rem}
The map $Ad_\sigma$ in \eqref{Ad} can also be written in the following useful ways:
\begin{align}
    Ad_\sigma(a\ot \tilde{a})& = \sigma(\zero{a}\ot\tuno{\one{a}})\tdue{\one{a}}\ot \tuno{\two{a}}\sigma^{-1}( \tdue{\two{a}} \ot \tilde{a}) \nn \\
   & = \sigma(\one{(a\ot \tilde{a})}) \, \two{(a\ot \tilde{a})} \, 
  (\sigma\circ s) \circ \sigma^{-1} (\three{(a\ot \tilde{a})} ) ) \label{altad}
 \end{align}
Indeed, for $a\ot a'\in \C(A, H)$, by inserting \eqref{p5} and using the definition 
of the inverse $\phi^{-1}$, we compute,
\begin{align*}
    Ad_\sigma(a\ot \tilde{a})& = \sigma(\zero{a}\ot\tuno{\one{a}})\tdue{\one{a}}\ot \sigma(\zero{\tilde{a}}\ot \tuno{\one{\tilde{a}}})
   \tdue{\one{\tilde{a}}}
    \\        & = \sigma(\zero{a}\ot\tuno{\one{a}})\tdue{\one{a}}\ot \tuno{\two{a}}   \tdue{\two{a}} 
    \sigma(\zero{\tilde{a}}\ot  \tuno{\one{\tilde{a}}})\tdue{\one{\tilde{a}}} \\ 
    & = \sigma(\zero{a}\ot\tuno{\one{a}})\tdue{\one{a}}\ot \tuno{\two{a}} \sigma(\zero{\tilde{a}}\ot  \tuno{\one{\tilde{a}}})\tdue{\two{a}} \tdue{\one{\tilde{a}}}\\
    & = \sigma(\zero{a}\ot\tuno{\one{a}})\tdue{\one{a}}\ot \tuno{\two{a}} 
   (\sigma\circ s) \circ \sigma^{-1}  
    ( \tdue{\two{a}} \ot \tilde{a}) \big)
      \\ & = \sigma(\one{(a\ot \tilde{a})}) \, \two{(a\ot \tilde{a})} \, 
      (\sigma\circ s) \circ \sigma^{-1} (\three{(a\ot \tilde{a})} )
\end{align*}
\end{rem}
\noindent
It is easy to see that $Ad_\sigma \circ Ad_\tau = Ad_{\tau \ast \sigma}$ for any
$\sigma_{1}$, $\sigma_{2}\in \B(\C(A, H))$, while $(Ad_\sigma)^{-1} = Ad_{\sigma^{-1}}$ 
and $Ad_\epsilon = id_{\C(A, H)}$. And, of course
$ad_\sigma \circ ad_\tau = ad_{\tau \ast \sigma}$, with $(ad_\sigma)^{-1} = ad_{\sigma^{-1}}$ 
and $ad_\epsilon = id_B$. Thus $Ad$ is a group morphism $Ad: \B(\C(A, H))\to\Aut(\C(A, H))$.

Next, given an automorphism $(\Phi, \varphi)$ of $\mathcal{C}(A, H)$ with inverse $(\Phi^{-1}, \varphi^{-1})$, 
we define an action of $(\Phi, \varphi)$ on the group of bisections $\B(\C(A, H))$ as follow:
\beq\label{aaobs}
\Phi \triangleright \sigma :=\varphi^{-1} \circ \sigma \circ \Phi ,
\eeq
for any $\sigma\in \B(\C(A, H))$. The result is an algebra map since it is a composition of algebra map. Moreover, for any $b\in B$, $(\Phi\triangleright\sigma)( t(b))=\varphi^{-1}(\sigma(t(\varphi(b))))=\varphi^{-1}(\varphi(b))=b$, so that $(\Phi\triangleright\sigma) \circ t = \id_B$; while 
$(\Phi\triangleright\sigma)(s(b))=\varphi^{-1}\big( (\sigma \circ s)(\varphi(b)) \big)$, so that $(\Phi\triangleright\sigma)\circ s\in\Aut(B)$. 
And one checks that
\beq\label{aaobs-1}
(\Phi\triangleright\sigma)^{-1} = \Phi\triangleright \sigma ^{-1} = \varphi^{-1} \circ \sigma^{-1} \circ \Phi .
\eeq

\begin{lem}\label{aaction}
Given any automorphism $(\Phi, \varphi)$, the action defined in \eqref{aaobs} is a group automorphism of $\B(\C(A, H))$.
\end{lem}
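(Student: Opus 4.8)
The plan is to show that the map $\sigma\mapsto \Phi\triangleright\sigma=\varphi^{-1}\circ\sigma\circ\Phi$ is simultaneously a homomorphism for the convolution product $\ast$ and a bijection of $\B(\C(A, H))$ onto itself. The discussion preceding the lemma already records that $\Phi\triangleright\sigma$ is again a bisection (it is an algebra map with $(\Phi\triangleright\sigma)\circ t=\id_B$ and $(\Phi\triangleright\sigma)\circ s\in\Aut(B)$), and that $\B(\C(A, H))$ is a group under $\ast$ by Proposition \ref{prop:gsob}. So only these two properties remain to be checked.

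First I would verify the homomorphism property $\Phi\triangleright(\sigma_1\ast\sigma_2)=(\Phi\triangleright\sigma_1)\ast(\Phi\triangleright\sigma_2)$. Writing $c=a\ot\tilde{a}$ and using the coproduct notation $\Delta_\C c=\one{c}\ot_{B}\two{c}$, the product \eqref{mulbis1} reads $(\sigma_1\ast\sigma_2)(c)=(\sigma_2\circ s)\big(\sigma_1(\one{c})\big)\,\sigma_2(\two{c})$. Evaluating the left-hand side means unfolding $\varphi^{-1}\circ(\sigma_1\ast\sigma_2)\circ\Phi$ on $c$. The crucial input is condition (iii) of Definition \ref{amoeba}, namely $(\Phi\ot_{B}\Phi)\circ\Delta_\C=\Delta_\C\circ\Phi$, which lets me replace $\one{(\Phi c)}\ot_{B}\two{(\Phi c)}$ by $\Phi(\one{c})\ot_{B}\Phi(\two{c})$. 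Then condition (i), $\Phi\circ s=s\circ\varphi$, together with the multiplicativity of $\varphi$, allows me to push $\Phi$ through the source-map slot and to factor a single $\varphi$ out of the entire expression; after applying the outer $\varphi^{-1}$ what remains is exactly $\big((\Phi\triangleright\sigma_1)\ast(\Phi\triangleright\sigma_2)\big)(c)$.

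Next, for bijectivity I would produce the inverse map explicitly. Since the bialgebroid automorphisms form a group, the inverse pair $(\Phi^{-1},\varphi^{-1})$ is again an automorphism, so $\Phi^{-1}\triangleright -$ sends bisections to bisections. A direct computation then gives $\Phi^{-1}\triangleright(\Phi\triangleright\sigma)=\varphi\circ(\varphi^{-1}\circ\sigma\circ\Phi)\circ\Phi^{-1}=\sigma$ and symmetrically $\Phi\triangleright(\Phi^{-1}\triangleright\sigma)=\sigma$, so the two actions are mutually inverse and $\Phi\triangleright -$ is a bijection. Combined with the previous paragraph this shows $\Phi\triangleright -$ is a group automorphism of $\B(\C(A, H))$.

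The main obstacle is the bookkeeping in the homomorphism step: one must keep track of how the single outer $\varphi^{-1}$ and the source map $s$ interact with $\Phi$ across the two legs of the coproduct, and the identity closes only because conditions (i) and (iii) of Definition \ref{amoeba} are available at the same time and $\varphi$ is an algebra map. Notably the argument needs neither an antipode nor any use of centrality of $B$ in $A$ beyond what is already built into the group structure of $\B(\C(A, H))$.
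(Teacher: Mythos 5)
Your proposal is correct and follows essentially the same route as the paper: the homomorphism property is obtained by unfolding the definitions and using condition (i) to cancel $\Phi\circ s\circ\varphi^{-1}$ into $s$, the multiplicativity of $\varphi^{-1}$ to factor it out, and condition (iii) to identify the result with $\Phi\triangleright(\sigma_1\ast\sigma_2)$; bijectivity then comes from $\Phi^{-1}\triangleright(\Phi\triangleright\sigma)=\sigma$ exactly as in the paper's composition-rule argument. The only cosmetic difference is that the paper also records $\Phi\triangleright\epsilon=\epsilon$ explicitly, which in your version follows automatically from the homomorphism property together with invertibility.
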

\begin{proof}
Let $\sigma, \tau\in \B(\C(A, H))$, and $c \in \C(A, H)$, we compute:
\begin{align*}
(\Phi\triangleright\tau)\ast(\Phi\triangleright\sigma)(c)&=(\Phi\triangleright\sigma)(s(\Phi\triangleright\tau(\one{c})))(\Phi\triangleright\sigma)(\two{c})\\
&= \big( \varphi^{-1} \circ \sigma \circ \Phi \circ s \circ \varphi^{-1} \circ \tau \circ \Phi (\one{c}) \, \big) 
\big(\varphi^{-1} \circ \sigma \circ \Phi(\two{c}) \big)\\
&= \big( \varphi^{-1} \circ \sigma \circ s \circ \tau \circ \Phi (\one{c}) \, \big) 
\big(\varphi^{-1} \circ \sigma \circ \Phi(\two{c}) \big)\\
&=\varphi^{-1} \big(\sigma \circ s \circ \tau(\Phi(\one{c})) \, \sigma (\Phi(\two{c}) ) \big)\\
&=\varphi^{-1} \circ (\tau\ast\sigma)\circ \Phi(c)\\
&=\Phi\triangleright(\tau\ast\sigma)(c),
\end{align*}
where the last but one step uses condition (iii) of Definition \ref{amoeba}. Also, 
\begin{align*}
\Phi \triangleright \epsilon = \varphi^{-1} \circ \epsilon \circ \Phi = \varphi^{-1} \circ \varphi \circ \epsilon = \epsilon.
\end{align*}
Finally, for any two automorphism $(\Phi, \varphi)$ and $(\Psi, \psi)$ of $\C(A, H)$, we have
\begin{align*}
\Phi\triangleright(\Psi\triangleright(\sigma)) = \varphi^{-1} \circ \psi^{-1} \circ \sigma \circ \Psi \circ \Phi = (\psi \varphi)^{-1} \circ 
\sigma \circ \Psi \circ \Phi
=(\Psi\circ \Phi)\triangleright\sigma .
\end{align*}
In particular $\Phi^{-1}\triangleright(\Phi\triangleright(\sigma))=\sigma$ and so the action is an automorphism 
of $\B(\C(A, H))$.
\end{proof}

\begin{lem}\label{d2}
For any automorphism $(\Phi, \varphi)$, and any $\sigma\in \B(\C(A, H))$ 
we have 
$$
Ad_{\Phi \triangleright \sigma}=\Phi^{-1} \circ Ad_\sigma \circ \Phi .
$$
\end{lem}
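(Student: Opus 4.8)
The plan is to evaluate both sides on an arbitrary element $c\in\C(A, H)$ by means of the convolution-type formula \eqref{altad} for the adjoint map, which for any bisection $\rho$ reads
$$
Ad_\rho(c)=\rho(\one{c}) \, \two{c} \, (\rho\circ s)(\rho^{-1}(\three{c})),
$$
the first $B$-valued factor acting on the left leg of $\C(A, H)\subseteq A\ot A^{op}$ through $s$ and the last on the right leg through $t$. Applying this to the bisection $\Phi\triangleright\sigma=\varphi^{-1}\circ\sigma\circ\Phi$, and using $\Phi\circ s=s\circ\varphi$ from condition (i) of Definition \ref{amoeba} together with $(\Phi\triangleright\sigma)^{-1}=\varphi^{-1}\circ\sigma^{-1}\circ\Phi$ from \eqref{aaobs-1}, one gets $(\Phi\triangleright\sigma)\circ s=\varphi^{-1}\circ(\sigma\circ s)\circ\varphi$, and hence
$$
Ad_{\Phi\triangleright\sigma}(c)=\varphi^{-1}\big(\sigma(\Phi(\one{c}))\big) \, \two{c} \, \varphi^{-1}\big((\sigma\circ s)(\sigma^{-1}(\Phi(\three{c})))\big).
$$

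For the right-hand side I would first push the coproduct through $\Phi$. Since $\Phi$ is $B$-bilinear by (i) and (ii) and commutes with the coproduct by (iii), coassociativity gives $\Delta^2\circ\Phi=(\Phi\ot_{B}\Phi\ot_{B}\Phi)\circ\Delta^2$, that is $\one{\Phi(c)}\ot_{B}\two{\Phi(c)}\ot_{B}\three{\Phi(c)}=\Phi(\one{c})\ot_{B}\Phi(\two{c})\ot_{B}\Phi(\three{c})$. Feeding this into \eqref{altad} for $\sigma$ evaluated at $\Phi(c)$ yields
$$
Ad_\sigma(\Phi(c))=\sigma(\Phi(\one{c})) \, \Phi(\two{c}) \, (\sigma\circ s)(\sigma^{-1}(\Phi(\three{c}))).
$$

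Next I would apply $\Phi^{-1}$. Its inverse $(\Phi^{-1},\varphi^{-1})$ satisfies $\Phi^{-1}\circ s=s\circ\varphi^{-1}$ and $\Phi^{-1}\circ t=t\circ\varphi^{-1}$ by (i) and (ii), so that $\Phi^{-1}$ is an algebra map intertwining the $B$-bimodule structures, $\Phi^{-1}(b\triangleright x\triangleleft b')=\varphi^{-1}(b)\triangleright\Phi^{-1}(x)\triangleleft\varphi^{-1}(b')$. Applying this to the previous display and using $\Phi^{-1}(\Phi(\two{c}))=\two{c}$ reproduces exactly the expression obtained for $Ad_{\Phi\triangleright\sigma}(c)$. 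Comparing the two establishes $Ad_{\Phi\triangleright\sigma}=\Phi^{-1}\circ Ad_\sigma\circ\Phi$.

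The only delicate part is the bookkeeping of the source and target actions on the two legs: one must be sure that in \eqref{altad} the leftmost $B$-factor acts via $s$ on the left leg and the rightmost via $t$ on the right leg, and that $\Phi^{-1}$ carries these into the same actions twisted by $\varphi^{-1}$. Centrality of $B$ in $A$, already used to derive \eqref{altad}, makes these bimodule actions coincide with ordinary multiplication inside $A\ot A$, so no ordering ambiguity survives and the remaining manipulations are pure substitution.
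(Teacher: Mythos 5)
Your proof is correct and follows essentially the same route as the paper: both arguments evaluate the two sides via the formula \eqref{altad}, push the iterated coproduct through $\Phi$ using condition (iii) of Definition \ref{amoeba}, and invoke the ($\varphi$-twisted) $B$-bimodule property of the automorphism to absorb the outer $B$-valued factors. The only cosmetic difference is that the paper verifies the equivalent identity $\Phi\circ Ad_{\Phi\triangleright\sigma}=Ad_\sigma\circ\Phi$ by applying $\Phi$, whereas you apply $\Phi^{-1}$ to $Ad_\sigma\circ\Phi$ directly; the ingredients are identical.
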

\begin{proof}
With $a\ot\tilde{a}\in \C(A, H)$, from \eqref{altad} we get
\begin{align}
    (Ad_\sigma \circ \Phi) (a\ot \tilde{a}) 
    = \sigma(\one{(\Phi (a\ot \tilde{a}))}) \, \two{(\Phi (a\ot \tilde{a}))} \, 
   (\sigma\circ s) \circ \sigma^{-1}  (\three{(\Phi (a\ot \tilde{a}))}) , \label{11}
 \end{align}
while, using \eqref{aaobs} and \eqref{aaobs-1}, we have
\begin{align*}
    Ad_{\Phi\triangleright\sigma} (a\ot \tilde{a}) & = (\Phi\triangleright\sigma) (\one{(a\ot \tilde{a})}) \, \two{(a\ot \tilde{a})} \,  
 \big( (\Phi\triangleright\sigma) \circ s \big) \circ (\Phi\triangleright\sigma)^{-1} 
    (\three{(a\ot \tilde{a})} ) \\
    & = \varphi^{-1} \big( \sigma (\Phi(\one{(a\ot \tilde{a})}) ) \big) \, \two{(a\ot \tilde{a})} \, 
   \big( (\Phi\triangleright\sigma) \circ s \big) \circ (\varphi^{-1} \circ \sigma^{-1} \big)  (\Phi(\three{(a\ot \tilde{a})}) ) . 
 \end{align*}
Since $\Phi$ is a bimodule map: $\Phi(b (a\ot \tilde{a}) \tilde{b}) = \varphi(b) \Phi(a\ot \tilde{a}) \varphi(\tilde{b})$, for all $b,\tilde{b} \in B$, we get, 
\begin{align}
 (\Phi \circ Ad_{\Phi\triangleright\sigma}) (a\ot \tilde{a})  
 = \sigma (\Phi(\one{(a\ot \tilde{a})}) ) \, \Phi(\two{(a\ot \tilde{a})}) \, 
   (\sigma \circ s) \circ \sigma^{-1} (\Phi(\three{(a\ot \tilde{a})}) ) .
  \label{33}
 \end{align}
That the right hand sides of \eqref{11} and \eqref{33} are equal follows from the equavariance 
condition (iii) of Definition \ref{amoeba}. 
\end{proof}

\begin{lem}\label{d3}
Let $\sigma$, $\tau\in \B(\C(A, H))$, then
$Ad_\tau \triangleright\sigma=\tau\ast\sigma\ast\tau^{-1}$.
\end{lem}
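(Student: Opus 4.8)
The plan is to transport the entire identity to the gauge group $\Aut_H(A)$ through the group isomorphism $\alpha : \Aut_H(A) \to \B(\C(A, H))$, $F \mapsto \sigma_F$, of Proposition \ref{atob}, rather than expanding $\tau \ast \sigma \ast \tau^{-1}$ directly by Sweedler calculus. Write $T := F_\tau$ and $S := F_\sigma$ for the gauge transformations associated with $\tau$ and $\sigma$, so that $\tau = \sigma_T$, $\sigma = \sigma_S$ and, by Lemma \ref{ajaut} together with \eqref{Ad}, one has $Ad_\tau(a\ot\tilde a) = T(a)\ot T(\tilde a)$, while $ad_\tau = \tau\circ s = T|_B$ (using $\sigma_F\circ s = F|_B$ from Proposition \ref{atob}).

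First I would compute the left hand side. From the definition \eqref{aaobs} of the action and from \eqref{gtob},
\begin{align*}
(Ad_\tau \triangleright \sigma)(a\ot\tilde a) &= (ad_\tau)^{-1}\big(\sigma(Ad_\tau(a\ot\tilde a))\big) \\
&= (T|_B)^{-1}\big(\sigma_S(T(a)\ot T(\tilde a))\big) = (T|_B)^{-1}\big(S(T(a))\,T(\tilde a)\big).
\end{align*}
The element $b := S(T(a))\,T(\tilde a)$ lies in $B$ since $\sigma$ is $B$-valued. I then claim that $(T|_B)^{-1}(b) = T^{-1}(S(T(a)))\,\tilde a$: because $T$ is an algebra map we have $T\big(T^{-1}(S(T(a)))\,\tilde a\big) = S(T(a))\,T(\tilde a) = b = T\big((T|_B)^{-1}(b)\big)$, so the claim follows from injectivity of $T$. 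Hence $(Ad_\tau \triangleright \sigma)(a\ot\tilde a) = (T^{-1}\circ S\circ T)(a)\,\tilde a = \sigma_{T^{-1}\circ S\circ T}(a\ot\tilde a)$, that is $Ad_\tau \triangleright \sigma = \alpha(T^{-1}\circ S\circ T)$; the conjugate $T^{-1}\circ S\circ T$ is again in $\Aut_H(A)$, a group by Proposition \ref{prop:gsogg}, so the right hand side is a genuine bisection.

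Next I would identify the right hand side. Since $\alpha$ carries the composition product $F \cdot G = G\circ F$ of Proposition \ref{prop:gsogg} to the convolution $\ast$ (so that $\sigma_G \ast \sigma_F = \sigma_{F\circ G}$, as in the proof of Proposition \ref{atob}), and since $\tau^{-1} = \sigma_{T^{-1}}$, I obtain
$$
\tau \ast \sigma \ast \tau^{-1} = \sigma_T \ast \sigma_S \ast \sigma_{T^{-1}} = \sigma_{S\circ T} \ast \sigma_{T^{-1}} = \sigma_{T^{-1}\circ S\circ T}.
$$
Comparing the two computations yields $Ad_\tau \triangleright \sigma = \tau \ast \sigma \ast \tau^{-1}$, which is exactly condition (2) of the crossed module.

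The step requiring the most care is the order bookkeeping: the product $\ast$ corresponds under $\alpha$ to the order-reversed composition $\sigma_G \ast \sigma_F = \sigma_{F\circ G}$, and $Ad$ itself reverses order ($Ad_\sigma\circ Ad_\tau = Ad_{\tau\ast\sigma}$), so one must verify that the conjugation actually produced is by $\tau$, giving $T^{-1}\circ S\circ T$, and not by $\tau^{-1}$. The only genuinely computational point is the identification $(T|_B)^{-1}(b) = T^{-1}(S(T(a)))\,\tilde a$, settled by injectivity of $T$ above; a self-contained alternative would expand both sides in Sweedler notation using \eqref{altad} and the iterated coproduct, but that route is considerably longer and I would avoid it.
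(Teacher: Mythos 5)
Your proof is correct, but it follows a genuinely different route from the paper. The paper proves Lemma \ref{d3} by a direct Sweedler computation entirely inside the bialgebroid: it expands $(ad_\tau^{-1}\circ\sigma)(Ad_\tau(a\ot\tilde a))$ using the form \eqref{altad} of $Ad_\tau$, the iterated coproduct, $(\tau\circ s)^{-1}=\tau^{-1}\circ s$ and $\sigma\circ t=\id_B$, and then regroups the factors to recognise the triple product \eqref{mulbis1}. You instead transport the whole identity through the isomorphism $\alpha$ of Proposition \ref{atob}, reducing it to the statement that conjugation in $\Aut_H(A)$ corresponds to conjugation in $\B(\C(A,H))$ once the order reversals in $F\cdot G=G\circ F$ and $\sigma_G\ast\sigma_F=\sigma_{F\circ G}$ are tracked; your bookkeeping is right, and the key identification $(T|_B)^{-1}\big(S(T(a))T(\tilde a)\big)=(T^{-1}\circ S\circ T)(a)\,\tilde a$ is correctly justified by injectivity of $T$ (or, even more directly, by noting $(T|_B)^{-1}=(T^{-1})|_B$ and that $T^{-1}$ is an algebra map). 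What each approach buys: yours is shorter and makes the crossed-module axiom conceptually transparent as ordinary conjugation on the gauge-group side; the paper's computation is self-contained in the bialgebroid formalism and, more importantly, carries over essentially verbatim to the extended setting of Theorem \ref{thm. crossed module2}, where $F_\tau$ is no longer multiplicative on all of $A$ and your step $T\big(T^{-1}(S(T(a)))\,\tilde a\big)=S(T(a))\,T(\tilde a)$ would not be available. A cosmetic remark only: your symbol $S=F_\sigma$ collides with the antipode notation of the paper; it causes no ambiguity in your argument but is worth renaming.
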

\begin{proof}
With  $a\ot\tilde{a}\in \C(A, H)$, using the definition \eqref{altad} we compute
\begin{align*}
 Ad_\tau \triangleright\sigma & (a\ot\tilde{a}) 
= (ad_\tau^{-1} \circ \sigma)  (Ad_\tau (a\ot\tilde{a}) ) \\
& = (( \tau \circ s)^{-1} \circ \sigma) \big( \tau(\one{(a\ot \tilde{a})}) \, \two{(a\ot \tilde{a})} \, 
  (\tau \circ s) \circ \tau^{-1}  (\three{(a\ot \tilde{a})}) \big)\\
& = (\tau^{-1} \circ s) \Big( (\sigma \circ s) \big(\tau(\one{(a\ot \tilde{a})}) \big)\, \sigma(\two{(a\ot \tilde{a})}) \, 
  (\sigma \circ t) \circ (\tau \circ s \circ \tau^{-1})  (\three{(a\ot \tilde{a})} ) \Big)
\\
& = (\tau^{-1} \circ s) \Big( (\sigma \circ s) \big(\tau(\one{(a\ot \tilde{a})}) \big)\, \sigma(\two{(a\ot \tilde{a})}) \, 
  (\tau \circ s) \circ \tau^{-1}  (\three{(a\ot \tilde{a})} ) \Big) \\
& = (\tau^{-1} \circ s) \Big( (\tau\ast\sigma) (\one{(a\ot \tilde{a})}) \Big) \, \tau^{-1} (\two{(a\ot \tilde{a})} )  \\
& = \tau\ast\sigma\ast\tau^{-1}(a\ot\tilde{a}),
\end{align*}
where we used $(\tau\circ s)^{-1}=\tau^{-1}\circ s$, $\sigma\circ t=\id_{B}$ and definition \eqref{mulbis1} for 
the product. 
\end{proof}
Taken together the previous lemmas establish that a crossed module structure to 
$\big( \B(\C(A, H)), \Aut(\C(A, H)), Ad, \triangleright \big)$, which is the content of Theorem \ref{thm. crossed module1}.

\subsection{CoInner authomorphisms of bialgebroids}
Given a Hopf algebra $H$ and a character $\phi: H\to \mathbb{C}$, one defines a Hopf algebra automorphisms 
(see \cite[page 3807]{schau}) by
\begin{align}\label{autohopf}
\mbox{coinn}(\phi):H\to H , \qquad \mbox{coinn}(\phi)(h):= \phi(\one{h}) \two{h} \phi(S(\three{h})),
\end{align}
for any $h\in H$. Recall that for a character $\phi^{-1} = \phi \circ S$.
The set $\mbox{CoInn}(H)$ of co-inner authomorphisms of $H$ is a normal subgroup of the group
$\Aut_{\mbox{\tiny{Hopf}}}(H)$ of Hopf algebra automorphisms (this is just  $\Aut(H)$ if one view $H$ 
as a bialgebroid over $\mathbb{C}$). 

We know from the previous sections that for a Galois object $A$ of a Hopf algebra $H$, the corresponding 
bialgebroid $\C(A, H)$ is a Hopf algebra. Also, the group of gauge transformations of the Galois object which is the same as the group of bisections can be identified with the group of characters of $\C(A, H)$ (see \eqref{isogen}). 
It turns out that these groups are also isomorphic to $\mbox{CoInn}(\C(A, H))$. 
We have the following lemma:

\begin{lem}\label{adjascoinner}
For a Galois object $A$ of a Hopf algebra $H$, let $\C(A, H)$ be the corresponding bialgebroid of $A$. 
If $\phi\in \B(\C(A, H)) = \textup{Char}(\C(A, H))$, then $Ad_\phi=\textup{coinn}(\phi)$.
\end{lem}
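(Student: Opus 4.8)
The plan is to read off both sides in the Sweedler notation of the Hopf algebra $\C(A, H)$ and to observe that, once the general formula for $Ad$ is specialised to the Galois object situation, the two expressions become literally identical. First I would record the simplifications that occur because the base algebra is now the ground field $B=\IC$. Any bisection $\sigma=\phi$ is a character of $\C(A, H)$, the balanced tensor product $\ot_B$ collapses to the ordinary $\ot$, and the composite $\sigma\circ s:\IC\to\IC$ is a unital algebra endomorphism of $\IC$, hence the identity; thus $ad_\sigma=\sigma\circ s=\id_B$ and in particular $(\sigma, ad_\sigma)$ is an honest Hopf algebra automorphism. Moreover, as already noted after \eqref{isogen}, for a character one has $\phi^{-1}=\phi\circ S_\C$, with $S_\C$ the antipode \eqref{anti}.

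Next I would start from the alternative expression \eqref{altad} for the adjoint map, namely
\[
Ad_\sigma(a\ot\tilde{a}) = \sigma(\one{(a\ot\tilde{a})}) \, \two{(a\ot\tilde{a})} \, (\sigma\circ s)\circ\sigma^{-1}(\three{(a\ot\tilde{a})}),
\]
where the iterated coproduct of $\C(A, H)$ is understood. Substituting the two observations above, the factor $(\sigma\circ s)\circ\sigma^{-1}$ reduces to $\sigma^{-1}=\phi\circ S_\C$, so that
\[
Ad_\phi(a\ot\tilde{a}) = \phi(\one{(a\ot\tilde{a})}) \, \two{(a\ot\tilde{a})} \, \phi\big(S_\C(\three{(a\ot\tilde{a})})\big).
\]
Finally I would compare this with the definition \eqref{autohopf} of the co-inner automorphism, evaluated on the Hopf algebra $\C(A, H)$,
\[
\textup{coinn}(\phi)(a\ot\tilde{a}) = \phi(\one{(a\ot\tilde{a})}) \, \two{(a\ot\tilde{a})} \, \phi\big(S_\C(\three{(a\ot\tilde{a})})\big),
\]
which is manifestly the same expression, giving $Ad_\phi=\textup{coinn}(\phi)$.

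I expect the only point requiring care --- and hence the main obstacle, though a mild one --- to be the bookkeeping that lets \eqref{altad} specialise cleanly: one must verify that the $B$-module subtleties hidden in the term $(\sigma\circ s)\circ\sigma^{-1}$ genuinely disappear when $B=\IC$, so that the scalars $\phi(\one{(a\ot\tilde{a})})$ and $\phi^{-1}(\three{(a\ot\tilde{a})})$ may be moved freely past the middle tensor factor $\two{(a\ot\tilde{a})}$, and that the triple coproduct appearing in \eqref{altad} is precisely the one used in \eqref{autohopf}. Both facts are immediate once $\ot_B=\ot$ over the ground field, so the identification of the two maps is then a direct term-by-term comparison.
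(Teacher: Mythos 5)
Your proposal is correct and follows essentially the same route as the paper: substitute $\phi^{-1}=\phi\circ S_{\C}$ into the alternative expression \eqref{altad} for $Ad_\phi$ and compare term by term with the definition \eqref{autohopf} of $\textup{coinn}(\phi)$. The only difference is that you spell out explicitly the simplifications $\sigma\circ s=\id_{\IC}$ and $\ot_B=\ot$ over the ground field, which the paper leaves implicit.
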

\begin{proof}
Let $\phi\in \mbox{Char}(\C(A, H))$; then $\phi^{-1}=\phi\circ S_{\C}$. Substituting the latter in \eqref{altad}, 
for $a\ot a'\in \C(A, H)$, we get 
\begin{align*}
    Ad_\phi (a\ot \tilde{a})& = \phi(\one{(a\ot \tilde{a})}) \, \two{(a\ot \tilde{a})} \, (\phi\circ S_{\C})(\three{(a\ot \tilde{a})} ) \\
    & = \mbox{coinn}(\phi)(a\ot \tilde{a}) , 
\end{align*}
as claimed.
\end{proof}

\begin{exa}\label{crossedmodule1}
Let us consider again the Taft algebra $T_{N}$ of Section \ref{taftal}. We know from 
Proposition \ref{bitaft} that for any $T_{N}$-Galois object $A_s$ the bialgebroid $\C(T_{N}, A_{s})$ 
is isomorphic to $T_{N}$ and bisections of $\C(T_{N}, A_{s})$ are the same as characters of $T_{N}$ the group of which is isomorphic to $\IZ_N$. A generic character is a map $\phi_{r}:T_{N}\to \mathbb{C}$, 
given on generators $x$ and $g$ by 
$\phi_{r}(x)=0$ and $\phi_{r}(g)=r$ for $r$ a $N$-root of unity $r^N=1$.  
The corresponding automorphism $Ad_{\phi_{r}}=\mbox{coinn}(\phi_{r})$ is easily found to be on generators given by 
$$
\mbox{coinn}(\phi_{r})(g) = g , \qquad \mbox{coinn}(\phi_{r})(x) = r^{-1} x \, .
$$
It is known (cf. \cite{schau1}, Lemma 2.1) that
$\Aut(T_{N}) \simeq \Aut_{\mbox{\tiny{Hopf}}}(T_{N}) \simeq \mathbb{C}^{\times}$:  Indeed, given $r\in \mathbb{C}^{\times}$, one defines an authomorphism $F_{r}: T_{N}\to T_{N}$ by $F_{r}(x):=rx$ and $F_{r}(g):=g$. Thus 
$Ad: \mbox{Char}(T_{N})\to \Aut(T_{N})$ is the injection sending $\phi_{r}$ to $F_{r^{-1}}$. 

Moreover, for $F\in \Aut(T_{N})$ and $\phi\in \mbox{Char}(T_{N})$, one checks that 
$Ad_{F\triangleright\phi}(x)=Ad_\phi (x)$ and $Ad_{F\triangleright \phi} (g )=Ad_\phi (g)$. Thus, 
as a crossed module, the action of $\Aut(T_{N})$ on $\mbox{Char}(T_{N})$ is trival and the crossed module 
$(\mbox{Char}(\C(T_{N}, A_{s})),\Aut(\C(T_{N}, A_{s})), Ad, \id)$ is  isomorphic to $(\mathbb{Z}_{N}, \mathbb{C}^{\times}, i, \id)$, with inclusion $i:\mathbb{Z}_{N}\to \mathbb{C}^{\times}$ given by $i(r):=e^{-i2r\pi/N}$ and $\mathbb{C}^{\times}$ acting trivially on $\mathbb{Z}_{N}$.
\end{exa}

\subsection{Crossed module structures on extended bisections}
In parallel with the crossed module structure on bialgebroid automorphisms and bisections, there is a similar structure  on the set of `extended' bialgebroid automorphisms and extended bisections. 

Given a left bialgebroid $(\cL, \Delta, \epsilon,s,t)$ be a left bialgebroid over the algebra $B$. 
An \textup{extended automorphism} of $\cL$ is a pair $(\Phi, \varphi)$ with $\varphi : B \to B$ an algebra map
and a unital invertible linear map $\Phi: \cL \to \cL$, 
obeying the properties $(i)-(iv)$ of Definition \ref{amoeba}

So, an extended automorphism is not required in general to be an algebra map while still satisfying all other properties 
of  an automorphism. In particular we still have the bimodule property: $\Phi(b a \tilde{b}) = \varphi(b) \Phi(a) \varphi(\tilde{b})$. We denote by $\Aut^{ext}(\cL)$ the group (by composition) of extended automorphisms of $\cL$.
There is an analogous of Theorem \ref{thm. crossed module1}:
\begin{thm}\label{thm. crossed module2}
Given a Hopf--Galois extension $B=A^{coH}\subseteq A$, let $\mathcal{C}(A, H)$ be the corresponding left Ehresmann--Schauenburg bialgebroid, and assume $B$ be in the centre of $A$. Then there is a group morphism 
$Ad: \B^{ext}(\C(A, H))\to \Aut^{ext}(\C(A, H))$ and an action 
$\triangleright$ of $\Aut^{ext}(\C(A, H))$ on $\B^{ext}(\C(A, H))$ such that the group of extended automorphisms 
$\Aut^{ext}(\C(A, H))$ and of extended bisections $\B^{ext}(\C(A, H))$, form a crossed module.
\end{thm}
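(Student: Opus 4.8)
The plan is to follow the proof of Theorem \ref{thm. crossed module1} step by step, reusing the skeleton of Lemmas \ref{ajaut}, \ref{aaction}, \ref{d2} and \ref{d3}, but with one essential change forced by the fact that extended bisections and extended automorphisms need no longer be algebra maps. For an extended bisection $\sigma$ one \emph{cannot} define $Ad_\sigma$ by the formula \eqref{Ad}, i.e.\ as $F_\sigma\ot F_\sigma$: that map fails to be a coring morphism. This is already visible for $H=\IC[G]$, where an extended bisection corresponds to a family $(f_g)$ with $f_e=1$ and the other $f_g$ arbitrary in $\IC^\times$, and where $(F_\sigma\ot F_\sigma)(v_g\ot v_{g^{-1}})=f_g f_{g^{-1}}\,(v_g\ot v_{g^{-1}})$ scales the grouplike $v_g\ot v_{g^{-1}}$ by a factor $f_g f_{g^{-1}}$ which equals $1$ only when $\sigma$ is a genuine bisection. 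I would therefore take as definition the coinner form \eqref{altad},
\[
Ad_\sigma(a\ot\tilde a):=\sigma(\one{(a\ot\tilde a)})\,\triangleright\,\two{(a\ot\tilde a)}\,\triangleleft\,\big(ad_\sigma\circ\sigma^{-1}\big)(\three{(a\ot\tilde a)}),
\]
with $ad_\sigma=\sigma\circ s\in\Aut(B)$ and $\sigma^{-1}$ the $\ast$-inverse of $\sigma$, which exists by the very definition of $\B^{ext}(\C(A,H))$. For a genuine bisection this agrees with $F_\sigma\ot F_\sigma$, so it is the correct extension.

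First I would prove the analogue of Lemma \ref{ajaut}, namely that $(Ad_\sigma,ad_\sigma)$ is an extended automorphism of $\C(A,H)$ in the sense introduced just before the statement. In the coinner form $Ad_\sigma$ is manifestly valued in $\C(A,H)$, being the middle coproduct leg $\two{(a\ot\tilde a)}\in\C(A,H)$ twisted by the $B$-bimodule actions $\triangleright,\triangleleft$; it is unital and invertible with $Ad_\sigma^{-1}=Ad_{\sigma^{-1}}$. Conditions (i), (ii) and (iv) of Definition \ref{amoeba} follow by evaluating on $s(b)$, $t(b)$ and through $\epsilon$, using only that $\sigma$ and $\sigma^{-1}$ are unital, that $\sigma\circ t=\id_B=\sigma^{-1}\circ t$, that $\sigma\circ s=ad_\sigma$, and the counit axioms; for instance (iv) reduces, after collapsing the middle leg by the counit, to $\sigma(\one c)\,ad_\sigma(\sigma^{-1}(\two c))=ad_\sigma\big((\sigma\ast\sigma^{-1})(c)\big)=ad_\sigma(\epsilon(c))$. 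The crucial point is condition (iii): here the identity \eqref{idtens} used in Lemma \ref{ajaut} is \emph{unavailable}, since its proof needed $F_\sigma$ to be an algebra map. Instead I would verify $\Delta\circ Ad_\sigma=(Ad_\sigma\ot_B Ad_\sigma)\circ\Delta$ directly from the coinner form, by applying coassociativity and then merging the two inner coproduct legs through the convolution relation $\sigma^{-1}\ast\sigma=\epsilon=\sigma\ast\sigma^{-1}$, the balanced-tensor identity $m\triangleleft b\ot_B m'=m\ot_B b\triangleright m'$ being what lets the $B$-valued coefficients $\sigma$ and $ad_\sigma\circ\sigma^{-1}$ recombine correctly. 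Note that we do \emph{not} claim $Ad_\sigma$ is an algebra map; it is only a unital invertible $B$-bimodule coring map, exactly as an extended automorphism must be, the bimodule property itself coming from the $B$-linearity of $\sigma$ (Remark \ref{remlin}, Definition \ref{def:gebisection}).

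Next I would check that $Ad$ is a group morphism $\B^{ext}(\C(A,H))\to\Aut^{ext}(\C(A,H))$, establishing $Ad_\sigma\circ Ad_\tau=Ad_{\tau\ast\sigma}$ together with $Ad_\epsilon=\id$ and $(Ad_\sigma)^{-1}=Ad_{\sigma^{-1}}$. Using the bimodule property just proved, the composition $Ad_\sigma\circ Ad_\tau$ expands over five coproduct legs, and the outer legs collapse to the convolution coefficients $(\tau\ast\sigma)$ and $(ad_{\tau\ast\sigma}\circ(\tau\ast\sigma)^{-1})$ precisely because $ad_{\tau\ast\sigma}=ad_\sigma\circ ad_\tau$ and $(\tau\ast\sigma)^{-1}=\sigma^{-1}\ast\tau^{-1}$, once more with no appeal to multiplicativity of $\sigma$ or $\tau$. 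I would then define the action of $\Aut^{ext}$ on $\B^{ext}$ by \eqref{aaobs}, $\Phi\triangleright\sigma:=\varphi^{-1}\circ\sigma\circ\Phi$. At this point Lemmas \ref{aaction}, \ref{d2} and \ref{d3} transcribe essentially verbatim: their proofs are phrased entirely in terms of the coinner formula \eqref{altad}, the product \eqref{mulbis1ex}, condition (iii) of (extended) automorphisms, and the identities $\sigma\circ t=\id_B$, $\sigma\circ s\in\Aut(B)$, $(\tau\circ s)^{-1}=\tau^{-1}\circ s$ and the $B$-linearity of bisections, none of which uses the algebra-map hypothesis. Lemma \ref{d2} then yields the crossed-module relation $Ad_{\Phi\triangleright\sigma}=\Phi^{-1}\circ Ad_\sigma\circ\Phi$ and Lemma \ref{d3} yields $Ad_\tau\triangleright\sigma=\tau\ast\sigma\ast\tau^{-1}$, which with the group-morphism property complete the crossed module.

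The main obstacle is precisely the verification of condition (iii) and of the group-morphism identity for the coinner-defined $Ad_\sigma$ when $\sigma$ is merely convolution invertible: one no longer has a closed formula for $\sigma^{-1}$ (in contrast to \eqref{invobis1}), nor any multiplicativity of $\sigma$ on products of coinvariant elements. The computations must therefore be organised so that $\sigma$ and $\sigma^{-1}$ occur only in convolution-paired positions, where $\sigma\ast\sigma^{-1}=\epsilon=\sigma^{-1}\ast\sigma$ can be invoked, and so that the two $B$-valued coefficient maps $\sigma$ and $ad_\sigma\circ\sigma^{-1}$ interact across $\ot_B$ through the balanced-tensor relation. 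This bookkeeping is exactly what replaces the algebra-map identity \eqref{idtens} of the non-extended case.
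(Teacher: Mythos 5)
Your proposal is correct and follows essentially the same route as the paper: the paper likewise defines $Ad_\sigma$ for an extended bisection by the coinner formula \eqref{Ad1} (the extension of \eqref{altad}), verifies condition (iii) directly by splitting the coproduct legs and invoking $(\sigma\circ s)\circ\sigma^{-1}(\one{c})\,\sigma(\two{c})=\epsilon(c)$ in place of the now-unavailable identity \eqref{idtens}, establishes $Ad_\sigma\circ Ad_\tau=Ad_{\tau\ast\sigma}$ by the same five-leg convolution bookkeeping, and then transports Lemmas \ref{aaction}, \ref{d2} and \ref{d3} with the action \eqref{aaobs}. Your preliminary observation that $F_\sigma\ot F_\sigma$ fails to be a coring map for merely convolution-invertible $\sigma$ (illustrated on group algebras) is a useful motivation the paper leaves implicit, but it does not change the argument.
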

This result is established in parallel and similarly to the proof of Theorem \ref{thm. crossed module1}. Here we shall only point to the differences in the definitions and the proofs.

Thus, under the hypothesis of Theorem \ref{thm. crossed module2}, 
for any bisection $\sigma\in \B^{ext}(\C(A, H))$, define 
$Ad_\sigma: \C(A, H)\to \C(A, H)$, for any $a\ot \tilde{a} \in \C(A, H)$, by
\begin{align}\label{Ad1}
Ad_\sigma(a\ot \tilde{a}) &:=(\sigma(\zero{a}\ot\tuno{\one{a}})\tdue{\one{a}})\ot
  \big(\tuno{\two{a}}(\sigma\circ s) \circ \sigma^{-1}(\tdue{\two{a}}\ot \tilde{a}) \big)  , \nn \\
&\:=\sigma(\one{(a\ot \tilde{a})}) \, \two{(a\ot \tilde{a})} \, 
 (\sigma\circ s) \circ \sigma^{-1}(\three{(a\ot \tilde{a})})  
\end{align}
in parallel with \eqref{altad}. 
Then the pair of map $(Ad_\sigma, ad_\sigma = \sigma \circ s)$ is an extended automorphism of $\C(A, H)$.
In particular we have, for any $c = a \ot \tilde{a} \in \C(A, H)$, 
\begin{align*}
    \Delta_{\C}(Ad_\sigma(c))&=\sigma(\one{c}) \, \two{c} \ot_{B} \three{c} \, 
     (\sigma\circ s) \circ \sigma^{-1}(\four{c})  
    \\
    &=\sigma(\one{c}) \, \two{c} \,    (\sigma\circ s) \circ \sigma^{-1}(\three{c})   \ot_{B} 
    \sigma(\four{c}) \, \five{c} \,   (\sigma\circ s) \circ \sigma^{-1}(\six{c})   \end{align*}
where the 2nd step use $(\sigma\circ s) \circ \sigma^{-1}(\one{c}) \, \sigma(\two{c})=\epsilon(c)$. 
With the latter, we have also,
\begin{align*}
    (\epsilon\circ Ad_\sigma)(c)&=\sigma(\one{c}) \, \epsilon(\two{c}) \, 
     (\sigma\circ s) \circ \sigma^{-1}(\three{c})   \\
    &=\sigma(\one{c}) \,  (\sigma\circ s) \circ \sigma^{-1}(\two{c})   \\
    &=(\sigma\circ s) \, \epsilon(c)\\
    &=ad_\sigma(\epsilon(c)).
\end{align*}
Moreover, for two extended bisections $\sigma$ and $\tau$ we have, for $c\in \C(A, H)$,
\begin{align*}
   Ad_\sigma &\circ Ad_\tau (c)
    =Ad_\sigma \big(\tau(\one{c}) \, \two{c} \,   (\tau\circ s) \circ \tau^{-1}(\three{c})   \big)\\
    &= s \big(ad_\sigma (\tau(\one{c})) \big) \, Ad_\sigma(\two{c}) \, t \,  \big(ad_\sigma \circ (\tau \circ s) \circ \tau^{-1}(\three{c}) \big) \\
    &=s\Big(\sigma\big(s(\tau(\one{c}))\big)\Big)\Big(\sigma(\two{c}) \, \three{c}\, \big(\sigma(s(\sigma^{-1}(\four{c})))\big)\Big) \,   t \circ \big( \sigma\circ s) \circ (\tau \circ s) \circ \tau^{-1}(\five{c}) \big) \\
    &=\Big(\sigma\big(s(\tau(\one{c}))\big)\sigma\big(\two{c}\big)\Big) \, \three{c} \, \Big((\sigma\circ s\circ \tau\circ s) \big((\tau^{-1}\circ s) \circ \sigma^{-1}(\four{c}) \, \tau^{-1}(\five{c})\big)   \Big)\\
    &=(\tau\ast \sigma)(\one{c}) \, \two{c} \, \big((\tau\ast\sigma)\circ s)(\sigma^{-1}\ast\tau^{-1}(\three{c})\big)\\
    &=Ad_{\tau\ast \sigma}(c),
\end{align*}
with the 2nd step using $Ad_\sigma$ is a $B$-bimodule map. 
One also shows $ad_\sigma \circ ad_\tau = ad_{\tau \ast \sigma}$ 
and $(Ad_\epsilon, ad_\epsilon) = (\id_{\C(A, H)}, \id_{B})$. 
Therefore $(Ad_\sigma, ad_\sigma)$ is invertible with inverse $(Ad_{\sigma^{-1}}, ad_{\sigma^{-1}})$.

When $\sigma$ is an algebra map, \eqref{Ad1} reduces to \eqref{Ad} (or equivalently to\eqref{altad}). 

If $(\Phi, \varphi)$ is an extended automorphism of $\mathcal{C}(A, H)$ with inverse $(\Phi^{-1}, \varphi^{-1})$
the formula \eqref{aaobs} is an action of $(\Phi, \varphi)$ on $\B^{ext}(\C(A, H))$, a group 
automorphism of $\B^{ext}(\C(A, H))$.
 
We only check $F\triangleright\sigma$ is well defined as an extended bisection 
since the rest goes as in the previous section. For $a\ot \tilde{a}\in \C(A, H)$ and $b\in B$, 
a direct computation yields:
\begin{align*}
(\Phi \triangleright \sigma)((a\ot \tilde{a}) \triangleleft b) & = (\Phi \triangleright \sigma)(a\ot \tilde{a}) \, b \\
(\Phi \triangleright\sigma)(b\triangleright(a\ot \tilde{a})) & = (\Phi \triangleright\sigma)(s(b)) \, 
(\Phi\triangleright\sigma)(a\ot \tilde{a}).
\end{align*}
Finally, with similar computation as those of Lemma \ref{d2} and Lemma \ref{d3} one shows that
for any extended automorphism $(\Phi, \varphi)$, and any $\sigma\in \B^{ext}(\C(A, H))$ one has  
$$
Ad_{\Phi \triangleright\sigma} = \Phi^{-1}\circ Ad_\sigma \circ \Phi .
$$
And that, with $\sigma$, $\tau\in \B^{ext}(\C(A, H))$, one has
$$
Ad_\tau \triangleright\sigma=\tau\ast\sigma\ast\tau^{-1} .
$$

\begin{exa}
Consider a $H$-Galois object $A$ and let $\C(A, H)$ be the corresponding bialgebroid, a Hopf algebra itself. 
Given an extended bisection $\sigma \in \B^{ext}(\C(A, H)) \simeq \mbox{Char}^{ext}(H)$, the expression \eqref{Ad1} reduces to
$$  
Ad_\sigma(a\ot \tilde{a}) = \sigma(\one{(a\ot \tilde{a})}) \, \two{(a\ot \tilde{a})} \, \sigma^{-1}(\three{(a\ot \tilde{a})}) 
$$
In analogy with \eqref{autohopf} to which it reduces when $\phi$ is a character (and Lemma \ref{adjascoinner}) 
we may thing of this unital invertible coalgebra map as defining an extended coinner authomorphism of $\C(A, H)$,
$\mbox{coinn}(\sigma)(c):= Ad_\sigma(c) = \sigma(\one{c}) \two{c}\sigma^{-1}(\three{c})$.
\end{exa}

In Example \ref{crossedmodule1} we constructed an Abelian crossed module for the Taft algebras. The following example present a non-Abelian crossed module for Taft algebras with respect to the extended characters and 
extended automorphisms.

\begin{exa}
We know from Section \ref{taftal} that the Schauenburg bialgebroid $\C(A_{s}, T_{N})$ of any Galois object $A_s$ for the Taft algebra $T_{N}$, is isomorphic to $T_{N}$ itself. Thus $\Aut^{ext}(\C(A_{s}, T_{N})) \simeq 
\Aut^{ext}(T_{N})$ is the group of unital invertible coalgebra maps: maps
$\Phi: T_{N} \to T_{N}$ such that $\Phi(\one{h})\ot \Phi(\two{h})=\one{\Phi(h)}\ot\two{\Phi(h)}$ for any $h\in T_{N}$ with 
$\Phi(1)=1$. 

Let us illustrate this for the case $N=2$. The coproduct of $T_{2}$ on the generators $x,g$ will then require the following condition for an automorphism $\Phi$:
\begin{align}
\one{\Phi(g)}\ot\two{\Phi(g)} &= \Phi(g) \ot \Phi(g) \nn \\
\one{\Phi(x)}\ot\two{\Phi(x)} &= 1 \ot \Phi(x) + \Phi(x) \ot g \nn \\
\one{\Phi(xg)}\ot\two{\Phi(xg)} &= g \ot \Phi(xg) + \Phi(xg) \ot 1 \, . 
\end{align}
A little algebra then shows that
\begin{align}
\Phi(g) & = g  \nn \\
\Phi(x) & =  c\, ( g - 1 ) + a_2 \, x \nn \\
\Phi(xg) & = b\, ( 1 - g ) + a_1 \, xg
\end{align}
for arbitrary parameters $b, c \in \IC$ and $a_1, a_2 \in \IC^{\times}$ (for $\Phi$ to be invertible). 
As in \eqref{autverT} we can represent $\Phi$ as a matrix:
\begin{align}\label{authopfT}
\Phi : \begin{pmatrix}
1\\ xg\\ g\\ x\\
\end{pmatrix} 
\quad \mapsto \quad
 \begin{pmatrix}
1 & 0 & 0 & 0\\
b & a_1 & -b & 0\\
0 & 0 & 1 & 0\\
-c & 0 & c & a_2\\
\end{pmatrix}
\begin{pmatrix}
1\\ xg\\ g\\ x\\
\end{pmatrix} .
\end{align}
One checks that matrices $M_\Phi                  $ of the form above form a group: 
$\Aut^{ext}(T_{N}) \simeq \Aut_{\mbox{\tiny{Hopf}}}(T_{N})$. 

Given $\sigma \in \mbox{Char}^{ext}(T_{2})$ we shall denote $\sigma_a = \sigma(a) \in \IC$ for $a\in \{1, x, g, xg\}$. For the convolution inverse  $\sigma^{-1}$, from the condition $\sigma\ast\sigma^{-1}=\epsilon$ we get on the basis that 
\begin{equation}\label{extended character}
\left \{
  \begin{aligned}
    &\sigma_1 = (\sigma^{-1})_1 = 1, \\
    &\sigma_{g}(\sigma^{-1})_{g}=1,  \\
    &\sigma_{g}(\sigma^{-1})_{x}+\sigma_{x}=0,\\
    &\sigma_{g}(\sigma^{-1})_{xg}+\sigma_{xg}=0 
  \end{aligned} \right.
\end{equation}
from which we solve
\begin{equation}\label{ext-cha2}
\left \{
  \begin{aligned}
    &(\sigma^{-1})_{g} = (\sigma_{g})^{-1},  \\
    &(\sigma^{-1})_{x} = -\sigma_{x} (\sigma_{g})^{-1} ,\\
    &(\sigma^{-1})_{xg} = -\sigma_{xg} (\sigma_{g})^{-1}  
  \end{aligned} \right.
\end{equation}
Then computing $Ad_\sigma(h) = \sigma(\one{h}) \two{h} \sigma^{-1}(\three{h})$ leads to
\begin{align}
\label{ad-taft}
Ad_\sigma \begin{pmatrix}
1\\ xg\\ g\\ x\\
\end{pmatrix} 
=
\begin{pmatrix}
1 & 0 & 0 & 0\\
\sigma_{xg} & \sigma_{g} & -\sigma_{xg} & 0\\
0 & 0 & 1 & 0\\
- \sigma_{x} (\sigma_{g})^{-1} & 0 & \sigma_{x} (\sigma_{g})^{-1} & (\sigma_{g})^{-1}\\
\end{pmatrix}.
\end{align}
We see that the matrix \eqref{ad-taft} is of the form \eqref{authopfT} with the restriction that $a_2=a_1^{-1}$ so that $Ad_\phi$ has determinant $1$. Clearly, the image of $\mbox{Char}^{ext}(T_{2})$ form a subgroup of 
$\Aut^{ext}(\C(A_{s}, T_{2})) \simeq \Aut^{ext}(T_{N})$.
Moreover, $Ad: \mbox{Char}^{ext}(T_{2})\to \Aut^{ext}(T_{2})$ is an injective map. Finally, the action $Ad_{\Phi\triangleright\sigma}$ will have as matrix just the product:
\begin{align}
& M_{Ad_{\Phi\triangleright\sigma}} = M_{\Phi^{-1}} \, M_{Ad_{\sigma}} \, M_\Phi \\
\qquad & =
 \begin{pmatrix}
1 & 0 & 0 & 0\\
a_1^{-1} [ \sigma_{xg} +  b (\sigma_{g} - 1 )] & \sigma_{g} & - a_1^{-1} [ \sigma_{xg} +  b (\sigma_{g} - 1 )] & 0 \\ ~~\\ 
0 & 0 & 1 & 0 \\ ~~\\
- a_2^{-1} [ \sigma_x \, (\sigma_g)^{-1} +c ( (\sigma_g)^{-1} - 1)] & 0 & a_2^{-1} [ \sigma_x \, (\sigma_g)^{-1} +c ( (\sigma_g)^{-1} - 1)] 
& (\sigma_g)^{-1} \\
\end{pmatrix} \nn 
\end{align}
We conclude that as a crossed module the action on $\mbox{Char}^{ext}(T_{2})$ is not trivial.
\end{exa}

\appendix

\section{The classical gauge groupoid}\label{gaugegroupoid}

We recall in this section some basic facts about the gauge groupoid associated to a principal bundle and to the corresponding group of bisections; we shall mostly follow the book \cite{KirillMackenzie}. Let $\pi: P\to M$ be a principal bundle over the manifold $M$ with structure Lie group $G$. Consider the diagonal action of $G$ on $P \times P$ given by $(u, v)g:=(ug, vg)$; denote by $[u, v]$
the orbit of $(u, v)$ and by $\Omega= P\times_{G} P$ the collection of orbits. Then $\Omega$ is a groupoid over $M$, 
--- the \emph{gauge} or \emph{Ehresmann groupoid} of the principal bundle,  with source and target projections given by
\begin{align}
    s([u, v]):=\pi(v), \qquad t([u, v]):=\pi(u).
\end{align}
The object inclusion map is $M\to P\times_{G} P$ is given by
\begin{align}
m \mapsto \id_m :=[u, u]
\end{align}
for $m\in M$ and $u$ any element in $\pi^{-1}(m)$. And the partial multiplication $[u, v']\cdot [v, w]$, defined when $\pi(v')=\pi(v)$ is given by
\begin{align}
    [u, v]\cdot [v', w]=[u, w g],
\end{align}
for the unique $g\in G$ such that $v=v' g$.
\footnote{Here one is really using the classical translation map $t: P\times_{M} P\to G$,
$(ug ,u) \mapsto g$.}
One can always choose representatives such that $v=v'$ and the multiplication is then simply $[u, v]\cdot [v, w]=[u, w]$.
The inverse is
\beq\label{inv-grp}
    [u, v]^{-1} = [v, u] .
\eeq

A \textit{bisection} of the groupoid $\Omega$ is a map $\sigma: M\to \Omega$, which is right-inverse to the source projection,
$s\circ \sigma=\id_{M}$, and is such that $t\circ \sigma : M \to M$ is a diffeomorphism. The collection of bisections,
denoted $\B(\Omega)$, form a group: given two bisections $\sigma_{1}$ and $\sigma_{2}$, their multiplication is defined by
\begin{align}
    \sigma_{1}\ast\sigma_{2}(m):=\sigma_{1} \big((t\circ \sigma_{2})(m)\big)\sigma_{2}(m), \qquad \mbox{for} \quad m\in M .
\end{align}
The identity is the object inclusion $m \mapsto \id_m$, simply denoted $\id$, with inverse given by
\begin{align}
    \sigma^{-1}(m)= \Big(\sigma\big((t \circ\sigma)^{-1}(m)\big)\Big)^{-1};
\end{align}
here $(t \circ\sigma)^{-1}$ as a diffeomorphism of $M$, while the outer inversion is the one in \eqref{inv-grp}.

The subset $\B_{P/G}(\Omega)$ of `vertical' bisections, that is those bisections that
are right-inverse to the target projection as well, $t\circ \sigma=\id_{M}$, form a subgroup of $\B(\Omega)$.

It is a classical result \cite{KirillMackenzie} that there is a group isomorphism between
$\B(\Omega)$ and the group of principal ($G$-equivariant) bundle automorphisms of the principal bundle,
\begin{equation}
\mathrm{Aut}_{G}(P) :=\{\varphi:P\to P \, ; \,\, \varphi (pg) = \varphi(p)g  \} \, ,
\end{equation}
 while $\B_{P/G}(\Omega)$ is group isomorphic to
the group of gauge transformations, that is the subgroup of principal bundle
automorphisms which are vertical (project to the identity on the base space):
\begin{equation}\label{AVclass}
\mathrm{Aut}_{P/G}(P) :=\{\varphi:P\to P \, ; \,\, \varphi (pg) = \varphi(p)g ~, \, \, \pi( \varphi(p)) = \pi(p) \} .
\end{equation}

\vspace{.5cm}
	
\noindent
{\bf Acknowledgment:} 
We are most grateful to Chiara Pagani for many useful discussions. GL was partially supported by INFN, Iniziativa Specifica GAST and INDAM-GNSAGA.

\end{document}